\newtheorem{definition}{\noindent{\it Definition}}[section]
\newtheorem{theorem}{\noindent{\it Theorem}}[section]
\newtheorem{proposition}{\noindent{\it Proposition}}[section]
\newtheorem{lemma}{\noindent{\it Lemma}}[section]
\newtheorem{remark}{\noindent{\it Remark}}[section]
\newtheorem{corollary}{\noindent{\it Corollary}}[section]
\newtheorem{example}{\noindent{\it Example}}[section]
\newenvironment{proof}{\noindent{\it Proof:}}{$\hfill$ $\Box$\\ }
\begin{document}

\title{On semi-vector spaces and semi-algebras}

\author{Giuliano G. La Guardia, Jocemar de Q. Chagas, Ervin K. Lenzi, Leonardo Pires
\thanks{Giuliano G. La Guardia ({\tt \small
gguardia@uepg.br}), Jocemar de Q. Chagas ({\tt \small jocemarchagas@uepg.br})
and Leonardo Pires ({\tt \small lpires@uepg.br}) are
with Department of Mathematics and Statistics,
State University of Ponta Grossa (UEPG), 84030-900, Ponta Grossa -
PR, Brazil. Ervin K. Lenzi ({\tt \small eklenzi@uepg.br}) is with Department of Physics,
State University of Ponta Grossa (UEPG), 84030-900, Ponta Grossa -
PR, Brazil. Corresponding author: Giuliano G. La Guardia ({\tt \small
gguardia@uepg.br}). }}

\maketitle

\begin{abstract}
It is well-known that the theories of semi-vector spaces and semi-algebras --
which were not much studied over time -- are utilized/applied in Fuzzy Set
Theory in order to obtain extensions of the concept of fuzzy numbers as well as to
provide new mathematical tools to investigate properties and new results
on fuzzy systems. In this paper we investigate the theory of
semi-vector spaces over the semi-field of nonnegative real numbers
${\mathbb R}_{0}^{+}$. We prove several results concerning semi-vector
spaces and semi-linear transformations.
Moreover, we introduce in the literature the concept of
eigenvalues and eigenvectors of a semi-linear operator, describing in some cases
how to compute them. Topological properties of semi-vector spaces
such as completeness and separability are also investigated. New families of
semi-vector spaces derived from semi-metric,
semi-norm, semi-inner product, among others are exhibited. Additionally,
some results on semi-algebras are presented.
\end{abstract}

\emph{keywords}: semi-vector space; semi-algebras; semi-linear operators

\section{Introduction}

The concept of semi-vector space was introduced by Prakash and Sertel in
\cite{Prakash:1974}. Roughly speaking, semi-vector spaces are
``vector spaces" where the scalars are in a semi-field. Although the concept of
semi-vector space was investigated over time,
there exist few works available in the literature dealing
with such spaces \cite{Radstrom:1952,Prakash:1974,Prakash:1976,Pap:1980,Gahler:1999,Janyska:2007,Milfont:2021}.
This fact occurs maybe due to the limitations that such concept brings,
i.e., the non-existence of (additive) symmetric for some (for all) semi-vector. A
textbook in such a topic of research is the book by Kandasamy~\cite{Kandasamy:2002}.

Although the seminal paper on semi-vector spaces is \cite{Prakash:1974},
the idea of such a concept was implicit in \cite{Radstrom:1952},
where Radstrom shown that a semi-vector space over the semi-field of nonnegative real
numbers can be extended to a real vector space
(see \cite[Theorem 1-B.]{Radstrom:1952}).
In \cite{Prakash:1974}, Prakash and Sertel investigated the
structure of topological semi-vector spaces. The authors were concerned with
the study of the existence of fixed points in compact convex sets and also
to generate min-max theorems in topological semi-vector spaces. In
\cite{Prakash:1976}, Prakash and Sertel investigated
properties of the topological semi-vector space consisting
of nonempty compact subsets of a real Hausdorff topological vector space.
In \cite{Pap:1980}, Pap investigated and formulated the concept of integrals of
functions having, as counter-domain, complete semi-vector spaces. W.
Gahler and S. Gahler \cite{Gahler:1999} showed that a (ordered) semi-vector
space can be extended to a (ordered) vector space and a (ordered)
semi-algebra can be extended to a (ordered) algebra. Moreover, they
provided an extension of fuzzy numbers. Janyska et al.~\cite{Janyska:2007}
developed such theory (of semi-vector space) by proving useful results
and defining the semi-tensor product of (semi-free) semi-vector spaces.
They were also interested to propose an algebraic model of physical scales.
Canarutto~\cite{Canarutto:2012} explored the concept of semi-vector spaces
to express aspects and to exploit
nonstandard mathematical notions of basics of quantum particle physics on
a curved Lorentzian background. Moreover, he dealt with the case of
electroweak interactions. Additionally, in \cite{Canarutto:2016},
Canarutto provided a suitable formulation of the fundamental mathematical
concepts with respect to quantum field theory. Such a paper presents a
natural application of the concept of semi-vector spaces and semi-algebras.
Recently, Bedregal et al. \cite{Milfont:2021} investigated (ordered)
semi-vector spaces over a weak semi-field $K$ (i.e., both $(K, +)$ and
$(K, \bullet)$ are monoids) in the context of fuzzy sets and applying
the results in multi-criteria group decision-making.

In this paper we extend the theory of semi-vector spaces. The semi-field of
scalars considered here is the semi-field of nonnegative real numbers.
We prove several results in the context of semi-vector spaces and semi-linear
transformations. We introduce the concept of semi-eigenvalues
and semi-eigenvectors of an operator and of a matrix, showing how to compute
it in specific cases. We investigate topological properties such as
completeness, compactness and separability of semi-vector spaces.
Additionally, we present interesting new families
of semi-vector spaces derived from semi-metric, semi-norm, semi-inner
product, metric-preserving
functions among others. Furthermore, we show some results concerning semi-algebras.
Summarizing, we provide new results on semi-vector spaces and semi-algebras,
although such theories are very difficult to be investigated due to the fact that vectors do
not even have (additive) symmetrical. These new results can be possibly utilized in the 
theory of fuzzy sets in order to extend it or in the generation of new results 
concerning such a theory.

The paper is organized as follows. In Section~\ref{sec2} we recall some concepts
on semi-vector spaces which will be utilized in this work.
In Section~\ref{sec3} we present and prove several results concerning semi-vector spaces
and semi-linear transformations. We introduce naturally the concepts of eigenvalue
and eigenvector of a semi-linear operator and of matrices. Additionally, we exhibit
and show interesting examples of semi-vector spaces derived from semi-metric, semi-norms,
metric-preserving functions among others. Results concerning semi-algebras are
also presented. In Section~\ref{sec3a} we show relationships between Fuzzy Set Theory
and the theory of semi-vector spaces and semi-algebras. Finally, a summary
of this paper is presented in Section~\ref{sec4}.

\section{Preliminaries}\label{sec2}

In this section we recall important facts on semi-vector spaces
necessary for the development of this work. In order to
define formally such concept, it is necessary to define the concepts of
semi-ring and semi-field.

\begin{definition}\label{defSR}
A semi-ring $(S, + , \bullet )$ is a set $S$ endowed with two binary
operations, $+: S\times S\longrightarrow S$ (addition),
$\bullet: S\times S\longrightarrow S$ (multiplication) such that: $\operatorname{(1)}$
$(S, +)$ is a commutative monoid; $\operatorname{(2)}$ $(S, \bullet)$ is a semigroup;
$\operatorname{(3)}$ the multiplication $\bullet$ is distributive with respect to $+$:
$\forall \ x, y, z \in S$, $(x + y)\bullet z = x\bullet z + y\bullet z$ and
$x\bullet(y + z ) = x\bullet y + x\bullet z$.
\end{definition}

We write $S$ instead of writing $(S, + , \bullet )$ if there is not
possibility of confusion. If the multiplication $\bullet$
is commutative then $S$ is a commutative semi-ring. If there exists $1
\in S$ such that, $ \forall \ x \in S$ one has
$1\bullet x = x\bullet 1 = x$, then $S$ is a semi-ring with identity.

\begin{definition}\cite[Definition 3.1.1]{Kandasamy:2002}\label{defSF}
A semi-field is an ordered triple $(K, +, \bullet )$ which is a
commutative semi-ring with unit satisfying the following conditions:
$\operatorname{(1)}$ $\forall \ x, y \in K$, if $x+y=0$ then $x=y=0$;
$\operatorname{(2)}$ if $x, y \in K$ and $x\bullet y = 0$ then $x=0$ or $y=0$.
\end{definition}

Before proceeding further, it is interesting to observe that in \cite{Gahler:1999}
the authors considered the additive cancellation law in the definition of
semi-vector space. In \cite{Janyska:2007}, the authors did not assume the existence
of the zero (null) vector.

In this paper we consider the definition of a semi-vector space in the context of
that shown in \cite{Gahler:1999}, Sect.3.1.

\begin{definition}\label{defSVS}
A semi-vector space over a semi-field $K$ is a ordered triple
$(V,$ $+, \cdot)$, where $V$ is a set endowed with the operations
$+: V\times V\longrightarrow V$ (vector addition) and
$\cdot:  K\times V\longrightarrow V$
(scalar multiplication) such that:
\begin{itemize}
\item [ $\operatorname{(1)}$] $(V, +)$ is an abelian monoid
equipped with the additive cancellation law: $\forall \ u, v, w \in V$,
if $u + v = u + w$ then $v = w$;
\item [ $\operatorname{(2)}$] $\forall$ $\alpha\in K$ and $\forall$
$u, v \in V$, $\alpha (u+v)=\alpha u + \beta v$;
\item [ $\operatorname{(3)}$] $\forall$ $\alpha, \beta \in K$
and $\forall$ $v\in V$, $(\alpha + \beta)v= \alpha v + \beta v$;
\item [ $\operatorname{(4)}$] $\forall$ $\alpha, \beta \in K$
and $\forall$ $v\in V$, $(\alpha\beta)v=\alpha (\beta v)$;
\item [ $\operatorname{(5)}$] $\forall$ $v \in V$ and $1 \in K$, $1v=v$.
\end{itemize}
\end{definition}

Note that from Item~$\operatorname{(1)}$ of Definition~\ref{defSVS},
all semi-vector spaces considered in this paper are \emph{regular},
that it, the additive cancellation law is satisfied. The zero (or null) vector
of $V$, which is unique, will be denoted by $0_{V}$. Let $v \in V$, $v\neq 0 $.
If there exists $u \in V$ such
that $v + u =0$ then $v$ is said to be \emph{symmetrizable}.
A semi-vector space $V$ is said to be \emph{simple} if the unique
symmetrizable element is the zero vector $0_{V}$. In other words, $V$ is
simple if it has none nonzero symmetrizable elements.

\begin{definition}\cite[Definition 1.4]{Janyska:2007}\label{defSBasis}
Let $V$ be a simple semi-vector space over ${\mathbb R}_{0}^{+}$.
A subset $B \subset V$ is called a semi-basis of $V$ if
every $v \in V$, $v\neq 0$, can be written in a unique way as $v =
\displaystyle\sum_{i \in I_v}^{} v^{(i)} b_i$,
where $v^{(i)} \in {\mathbb R}^{+}$, $b_i \in B$ and $I_v$ is a
finite family of indices uniquely determined by $v$. The finite
subset $B_v \subset B$ defined by $B_v := \{b_i \}_{i \in I_v }$
is uniquely determined by $v$. If a semi-vector
space $V$ admits a semi-basis then it is said to be semi-free.
\end{definition}

The concept of semi-dimension can be defined in analogous way to
semi-free semi-vector spaces due to the next result.

\begin{corollary}\cite[Corollary 1.7]{Janyska:2007}
Let $V$ be a semi-free semi-vector space. Then all semi-bases of $V$ have
the same cardinality.
\end{corollary}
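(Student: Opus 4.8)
The plan is to deduce the corollary from the classical invariance of dimension for ordinary real vector spaces, via the symmetrization of $V$ into a vector space (cf. \cite[Theorem~1-B.]{Radstrom:1952}). First I would recall the construction behind that theorem: on $V\times V$ the relation $(a,b)\sim(c,d)\Longleftrightarrow a+d=b+c$ is an equivalence relation, transitivity being exactly the additive cancellation law of Definition~\ref{defSVS}(1), and the quotient $\widehat V:=(V\times V)/{\sim}$ carries a real vector space structure given by $[(a,b)]+[(c,d)]:=[(a+c,b+d)]$, $\lambda[(a,b)]:=[(\lambda a,\lambda b)]$ and $(-\lambda)[(a,b)]:=[(\lambda b,\lambda a)]$ for $\lambda\ge 0$, with zero $[(0_V,0_V)]$ and $-[(a,b)]=[(b,a)]$. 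Moreover the map $\iota\colon V\to\widehat V$, $\iota(v):=[(v,0_V)]$, is semi-linear and injective (injectivity being, once more, the cancellation law). Everything then reduces to the claim that \emph{whenever $B$ is a semi-basis of $V$, the restriction of $\iota$ to $B$ is injective and $\iota(B)$ is a Hamel basis of $\widehat V$}: granting this, $|B|=|\iota(B)|=\dim_{\mathbb R}\widehat V$ for every semi-basis $B$, and the right-hand side is independent of $B$, which is the assertion.

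To show $\iota(B)$ spans $\widehat V$, I would write an arbitrary $[(a,b)]\in\widehat V$ as $\iota(a)-\iota(b)$, expand $a$ and $b$ in $B$ according to Definition~\ref{defSBasis} (with the empty sum when the vector is $0_V$), and invoke semi-linearity of $\iota$, obtaining a finite ${\mathbb R}$-linear combination of vectors in $\iota(B)$. To show $\iota(B)$ is linearly independent, suppose $\sum_{i=1}^{k}c_i\,\iota(b_i)=0$ with $b_1,\dots,b_k\in B$ distinct and $c_i\in{\mathbb R}$. Grouping the indices with $c_i>0$ into $P$ and those with $c_i<0$ into $N$, and using injectivity and semi-linearity of $\iota$, this becomes the identity $\sum_{i\in P}c_ib_i=\sum_{i\in N}(-c_i)b_i=:w$ in $V$, where $P$ and $N$ are disjoint. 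If $w\neq 0_V$ then $P$ and $N$ are both nonempty (an empty sum is $0_V$), and the uniqueness clause of Definition~\ref{defSBasis} forces $\{b_i:i\in P\}=\{b_i:i\in N\}$, impossible for disjoint sets of distinct vectors. If $w=0_V$ and, say, $P\neq\emptyset$, pick $j\in P$; then $c_jb_j$ is symmetrizable, and since a semi-free semi-vector space is simple (Definition~\ref{defSBasis} being stated only for simple spaces) we get $c_jb_j=0_V$, hence $b_j=c_j^{-1}(c_jb_j)=0_V$ — contradicting the fact that no semi-basis contains $0_V$ (otherwise the expansion of a nonzero vector could be padded with a zero-coefficient term, violating uniqueness). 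The symmetric argument handles $N\neq\emptyset$, so all $c_i=0$; and $\iota|_B$ is injective because $\iota$ is.

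The main obstacle is precisely what forces this route: the absence of additive inverses in $V$ means one cannot ``solve for'' a semi-basis vector inside $V$, so a direct Steinitz-exchange argument is awkward and the counting has to be carried out after passing to $\widehat V$. Accordingly, the one genuinely delicate step is the linear-independence argument above — specifically, ruling out that a positive combination of semi-basis vectors collapses to $0_V$, where simplicity of $V$ and the identity $\alpha\,0_V=0_V$ (a one-line consequence of the distributive law $\alpha(u+v)=\alpha u+\alpha v$ together with the cancellation law) do the work. The remaining points — that $\widehat V$ is a real vector space, that $\iota$ is semi-linear and injective, and that a vector space has a well-defined dimension — are standard.
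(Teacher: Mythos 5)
Your argument is correct, but note that the paper does not prove this statement at all: it is quoted verbatim as \cite[Corollary 1.7]{Janyska:2007}, so there is no internal proof to compare against. Your route --- symmetrize $V$ to the real vector space $\widehat V=(V\times V)/{\sim}$ \`a la Radstr\"om, show that $\iota(B)$ is a Hamel basis of $\widehat V$ for every semi-basis $B$, and quote invariance of dimension --- is a legitimate self-contained proof, and the genuinely delicate step (linear independence of $\iota(B)$) is handled properly: the case $w\neq 0_V$ via the uniqueness clause of Definition~\ref{defSBasis}, and the case $w=0_V$ via simplicity of $V$ (which is indeed built into Definition~\ref{defSBasis}, hence into ``semi-free''). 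Two cosmetic points: injectivity of $\iota$ is immediate from $v+0_V=0_V+w\Rightarrow v=w$ and does not actually invoke the cancellation law; and your parenthetical reason why $0_V\notin B$ should say that one pads an expansion with the term $1\cdot 0_V$ (coefficients in Definition~\ref{defSBasis} must lie in ${\mathbb R}^{+}$, so a ``zero-coefficient term'' is not admissible), which enlarges the index family $I_v$ and violates uniqueness. Finally, be aware that Remark~\ref{mainremark}(3) announces that the paper's \emph{new} results deliberately avoid the embedding into $\widehat V$; since this corollary is a quoted preliminary rather than one of those results, your use of the embedding is not at odds with that policy, but a proof intrinsic to $V$ (as in the cited source) would be more in the spirit of the paper.
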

Therefore, the semi-dimension of a semi-free semi-vector space is the
cardinality of a semi-basis (consequently, of all semi-bases) of $V$.
We next present some examples of semi-vector spaces.

\begin{example}\label{ex1}
All real vector spaces are semi-vector spaces, but they are not simple.
\end{example}

\begin{example}\label{ex2}
The set ${[{\mathbb R}_{0}^{+}]}^{n}=\underbrace{{\mathbb R}_{0}^{+}
\times \ldots \times {\mathbb R}_{0}^{+}}_{n \operatorname{times}}$
endowed with the usual sum of coordinates and scalar multiplication is a
semi-vector space over ${\mathbb R}_{0}^{+}$.
\end{example}

\begin{example}\label{ex3}
The set ${\mathcal M}_{n\times m}({\mathbb R}_{0}^{+})$ of matrices $n \times m$
whose entries are nonnegative real numbers equipped with the sum of
matrices and multiplication of a matrix by a scalar (in ${\mathbb R}_{0}^{+}$, of course)
is a semi-vector space over ${\mathbb R}_{0}^{+}$.
\end{example}

\begin{example}\label{ex4}
The set ${\mathcal P}_{n}[x]$ of polynomials with coefficients
from ${\mathbb R}_{0}^{+}$ and degree less than or equal
to $n$, equipped with the usual of polynomial sum and scalar multiplication,
is a semi-vector space.
\end{example}

\begin{definition}\label{semi-subspace}
Let $(V, +, \cdot )$ be a semi-vector space over ${\mathbb R}_{0}^{+}$. We say that a
non-empty subset $W$ of $V$ is a semi-subspace of $V$ if $W$ is closed
under both addition and scalar multiplication of $V$, that is,
\begin{itemize}
\item [ $\operatorname{(1)}$] $\forall \ w_1 , w_2 \in W \Longrightarrow
w_1 + w_2 \in W$;
\item [ $\operatorname{(2)}$] $\forall \ \lambda \in {\mathbb R}_{0}^{+}$ and
$\forall \ w \in W \Longrightarrow \lambda w \in W$.
\end{itemize}
\end{definition}

The uniqueness of the zero vector implies that for each $\lambda \in
{\mathbb R}_{0}^{+}$ on has $\lambda 0_{V} = 0_{V}$. Moreover, if $ v \in V$,
it follows that $0 v = 0 v + 0 v$; applying the regularity one obtains $0 v =0_{V}$.
Therefore, from Item~$\operatorname{(2)}$, every
semi-subspace contains the zero vector.

\begin{example}\label{ex4a}
Let ${\mathbb Q}_{0}^{+}$ denote the set of nonnegative rational numbers.
The semi-vector space ${\mathbb Q}_{0}^{+}$ considered as an
${\mathbb Q}_{0}^{+}$ space is a semi-subspace of
${\mathbb R}_{0}^{+}$ considered as an ${\mathbb Q}_{0}^{+}$ space.
\end{example}

\begin{example}\label{ex4b}
For each positive integer $ i \leq n$, the subset ${\mathcal P}_{(i)}[x]\cup \{0_{p}\}$,
where ${\mathcal P}_{(i)}[x]=\{p(x); \partial (p(x))=i \} $
and $0_{p}$ is the null polynomial, is a semi-subspace of ${\mathcal P}_{n}[x]$, shown in
Example~\ref{ex4}.
\end{example}

\begin{example}\label{ex4c}
The set of diagonal matrices of order $n$ with entries in ${\mathbb R}_{0}^{+}$ is a
semi-subspace of ${\mathcal M}_{n}({\mathbb R}_{0}^{+})$,
where the latter is the semi-vector space of square matrices with entries in ${\mathbb R}_{0}^{+}$
(according to Example~\ref{ex3}).
\end{example}

\begin{definition}\cite[Definition 1.22]{Janyska:2007}\label{semilineartrans}
Let $V$ and $W$ be two semi-vector spaces and $T: V\longrightarrow W$ be a map.
We say that $T$ is a semi-linear transformation if:
$\operatorname{(1)}$ $\forall \ v_1, v_2 \in V$, $T(v_1 + v_2) = T(v_1) + T(v_2)$;
$\operatorname{(2)}$ $\forall \lambda \in {\mathbb R}_{0}^{+}$ and
$\forall \ v \in V$, $T(\lambda v) =\lambda T(v)$.
\end{definition}

If $U$ and $V$ are semi-vector spaces then the set
$\operatorname{Hom}(U, V)=\{ T:U\longrightarrow V;
T \operatorname{is \ semi-linear} \}$ is also a semi-vector space.

\section{The New Results}\label{sec3}

We start this section with important remarks.

\begin{remark}\label{mainremark}
\begin{itemize}
\item [ $\operatorname{(1)}$] Throughout this section we always consider
that the semi-field $K$ is the set of nonnegative real numbers, i.e.,
$K= {\mathbb R}_{0}^{+}={\mathbb R}^{+}\cup \{0\}$.

\item [ $\operatorname{(2)}$] In the whole section (except
Subsection~\ref{subsec2}) we assume that the semi-vector
spaces $V$ are simple, i.e., the unique symmetrizable element is the zero vector $0_{V}$.

\item [ $\operatorname{(3)}$] It is well-known that a semi-vector space
$(V, +, \cdot)$ can be always extended to a vector space according
to the equivalence relation on $V \times V$ defined by $(u_1 , v_1 ) \sim (u_2 , v_2 )$
if and only if $u_1 + v_2 = v_1 + u_2$ (see \cite{Radstrom:1952}; see also
\cite[Section 3.4]{Gahler:1999}). However, our results are obtained without utilizing
such a natural embedding. In other words, if one want to compute, for instance,
the eigenvalues of a matrix defined over ${\mathbb R}_{0}^{+}$ we cannot solve the
problem in the associated vector spaces and then discard the negative ones. Put differently,
all computations performed here are restricted to nonnegative real numbers and
also to the fact that none vector (with exception of $0_V$) has (additive) symmetrical.
However, we will show that, even in this case, several results can be obtained.
\end{itemize}
\end{remark}

\begin{proposition}\label{prop1}
Let $V$ be a semi-vector space over ${\mathbb R}_{0}^{+}$. Then the following hold:
\begin{itemize}
\item [ $\operatorname{(1)}$] let $ v \in V$, $ v \neq 0_{V}$, and $\lambda
\in {\mathbb R}_{0}^{+}$; if $\lambda v = 0_{V}$ then $\lambda = 0$;
\item [ $\operatorname{(2)}$] if $\alpha , \beta \in {\mathbb R}_{0}^{+}$,
$v \in V$ and $ v \neq 0_{V}$, then the equality $\alpha v = \beta v$
implies that $\alpha = \beta$.
\end{itemize}
\end{proposition}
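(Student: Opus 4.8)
The plan is to lean on two arithmetic features of the ground semi-field $\mathbb{R}_{0}^{+}$ — every nonzero scalar is invertible inside $\mathbb{R}_{0}^{+}$, and any two nonnegative reals are comparable, so that whenever $\alpha \geq \beta$ there is some $\gamma \in \mathbb{R}_{0}^{+}$ with $\alpha = \beta + \gamma$ — together with the additive cancellation law from Item~$\operatorname{(1)}$ of Definition~\ref{defSVS} and the identity $\lambda 0_{V} = 0_{V}$ recorded just after Definition~\ref{semi-subspace}. Both items then fall out quickly.

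For Item~$\operatorname{(1)}$ I would argue by contradiction. Assume $\lambda v = 0_{V}$ with $v \neq 0_{V}$ but $\lambda \neq 0$; since $\lambda \in \mathbb{R}_{0}^{+}\setminus\{0\}$, the scalar $\lambda^{-1}$ also lies in $\mathbb{R}_{0}^{+}$, and axioms~$\operatorname{(4)}$ and~$\operatorname{(5)}$ give $v = 1\cdot v = (\lambda^{-1}\lambda)v = \lambda^{-1}(\lambda v) = \lambda^{-1}0_{V} = 0_{V}$, contradicting $v \neq 0_{V}$. Hence $\lambda = 0$.

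For Item~$\operatorname{(2)}$, suppose $\alpha v = \beta v$ with $v \neq 0_{V}$; without loss of generality take $\alpha \geq \beta$ and pick $\gamma \in \mathbb{R}_{0}^{+}$ with $\alpha = \beta + \gamma$. By axiom~$\operatorname{(3)}$ we get $\beta v + \gamma v = (\beta + \gamma)v = \alpha v = \beta v = \beta v + 0_{V}$, so the additive cancellation law (cancelling $\beta v$) yields $\gamma v = 0_{V}$. Since $v \neq 0_{V}$, Item~$\operatorname{(1)}$ forces $\gamma = 0$, i.e. $\alpha = \beta$.

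The one subtlety — and the reason regularity is built into Definition~\ref{defSVS} — is that vectors have no additive symmetric, so one cannot literally "subtract $\beta v$"; the cancellation law is precisely the substitute for subtraction that makes the argument go through, and any version of this proof that tacitly uses $-\beta v$ would be invalid in this setting. I do not expect a genuine obstacle here beyond the discipline of never invoking operations on $V$ that are not available; note in particular that simplicity of $V$ is not needed for either item.
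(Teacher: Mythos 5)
Your proof is correct and follows essentially the same route as the paper's: Item (1) via the multiplicative inverse of a nonzero scalar in $\mathbb{R}_{0}^{+}$, and Item (2) by writing the larger scalar as the smaller plus a nonnegative $\gamma$, applying the additive cancellation law to get $\gamma v = 0_{V}$, and invoking Item (1). The only cosmetic difference is that the paper phrases Item (2) as a proof by contradiction with $c>0$, whereas you argue directly with $\gamma \geq 0$; the substance is identical.
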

\begin{proof}
$\operatorname{(1)}$ If $\lambda \neq 0$ then there exists its
multiplicative inverse ${\lambda}^{-1}$, hence
$ 1 v = {\lambda}^{-1} 0_{V}= 0_{V}$, i.e., $v = 0_{V}$, a contradiction.\\
$\operatorname{(2)}$ If $\alpha \neq \beta$, assume w.l.o.g.
that $\alpha > \beta$, i.e., there exists a positive real
number $c$ such that $\alpha = \beta + c$. Thus, $\alpha v = \beta v$ implies
$\beta v + c v = \beta v$. From the cancellation law we have $c v = 0_{V}$,
and from Item~$\operatorname{(1)}$ it follows that $c = 0$, a contradiction.
\end{proof}

We next introduce in the literature the concept of eigenvalue and eigenvector of a
semi-linear operator.

\begin{definition}\label{eigenvector}
Let $V$ be a semi-vector space and $T:V\longrightarrow V$ be a semi-linear
operator. If there exist a non-zero vector $v \in V$ and
a nonnegative real number $\lambda$ such that $T(v)=\lambda v$, then $\lambda$ is an
eigenvalue of $T$ and $v$ is an eigenvector of $T$ associated with $\lambda$.
\end{definition}

As it is natural, the zero vector joined to the set of the eigenvectors
associated with a given eigenvalue has a semi-subspace structure.

\begin{proposition}\label{eigenspace}
Let $V$ be a semi-vector space over ${\mathbb R}_{0}^{+}$ and
$T:V\longrightarrow V$ be a semi-linear operator. Then the set
$V_{\lambda} = \{ v \in V  ; T(v)=\lambda v \}\cup \{0_{V}\}$ is a semi-subspace of $V$.
\end{proposition}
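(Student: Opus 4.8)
The plan is to verify directly that $V_{\lambda}$ satisfies the two closure conditions of Definition~\ref{semi-subspace}, namely closure under vector addition and under scalar multiplication. Since $V_{\lambda}$ contains $0_{V}$ by construction, it is non-empty, so it suffices to check these two properties.

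First I would take $v_{1}, v_{2} \in V_{\lambda}$ and show $v_{1}+v_{2} \in V_{\lambda}$. If either of them is $0_{V}$, then the sum equals the other, which already lies in $V_{\lambda}$, so assume both are nonzero with $T(v_{1})=\lambda v_{1}$ and $T(v_{2})=\lambda v_{2}$. Using that $T$ is semi-linear (additivity) together with the distributive axiom $\operatorname{(2)}$ of Definition~\ref{defSVS}, I compute $T(v_{1}+v_{2}) = T(v_{1})+T(v_{2}) = \lambda v_{1} + \lambda v_{2} = \lambda(v_{1}+v_{2})$. Hence $v_{1}+v_{2}$ either equals $0_{V}$ or is an eigenvector for $\lambda$; in both cases it belongs to $V_{\lambda}$. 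Next I would take $\mu \in {\mathbb R}_{0}^{+}$ and $v \in V_{\lambda}$ and show $\mu v \in V_{\lambda}$. If $v = 0_{V}$ or $\mu = 0$, then $\mu v = 0_{V} \in V_{\lambda}$ by the remark preceding Example~\ref{ex4a}. Otherwise $T(v)=\lambda v$, and using semi-linearity (homogeneity) of $T$ together with the commutativity and associativity of scalar multiplication (axioms $\operatorname{(4)}$ of Definition~\ref{defSVS}, and commutativity of $\bullet$ in the semi-field ${\mathbb R}_{0}^{+}$), I get $T(\mu v) = \mu T(v) = \mu(\lambda v) = (\mu\lambda) v = (\lambda\mu) v = \lambda(\mu v)$, so $\mu v \in V_{\lambda}$.

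There is no real obstacle here: the argument is a routine translation of the classical eigenspace proof, and the only point requiring a little care is the bookkeeping around the zero vector, since $V_{\lambda}$ is defined as the eigenvector set \emph{together with} $0_{V}$ rather than the solution set of $T(v)=\lambda v$ (a distinction that matters because, unlike in a vector space, one cannot always conclude $T(0_{V})=\lambda 0_{V}$ is "automatic" — though in fact $\lambda 0_{V}=0_{V}$ does hold by the uniqueness of the zero vector, as noted after Definition~\ref{semi-subspace}). Handling the degenerate subcases explicitly, as above, settles this cleanly and completes the verification that $V_{\lambda}$ is a semi-subspace of $V$.
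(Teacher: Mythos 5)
Your proof is correct and follows essentially the same route as the paper's: verify closure under addition via additivity of $T$ and axiom $\operatorname{(2)}$ of Definition~\ref{defSVS}, and closure under scalar multiplication via homogeneity of $T$ and commutativity of multiplication in ${\mathbb R}_{0}^{+}$. The only difference is that you handle the $0_{V}$ and $\mu=0$ degenerate cases explicitly, which the paper's proof glosses over; this is a minor improvement in care rather than a different argument.
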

\begin{proof}
From hypotheses, $V_{\lambda}$ is non-empty. Let $u, v \in V_{\lambda}$, i.e.,
$T(u)=\lambda u $ and $T(v)=\lambda v $. Hence, $T(u + v )= T(u) + T(v)=
\lambda (u + v )$, i.e., $u + v \in V_{\lambda}$. Further, if $\alpha \in {\mathbb R}_{0}^{+}$
and $u \in V$, it follows that $T(\alpha u)=\alpha T(u)= \lambda (\alpha u)$, that is,
$\alpha u \in V_{\lambda}$. Therefore, $V_{\lambda}$ is a semi-subspace of $V$.
\end{proof}

The next natural step would be to introduce the characteristic polynomial
of a matrix, according to the standard Linear Algebra. However, how
to compute $\det (A -\lambda I)$ if $-\lambda$ can be a negative real number? Based
on this fact we must be careful to compute the eigenvectors of a matrix.
In fact, the main tools to be utilized in computing eigenvalues/eigenvectors of
a square matrix whose entries are nonnegative real numbers is the
additive cancellation law in ${\mathbb R}_{0}^{+}$ and also the fact that positive
real numbers have multiplicative inverse. However, in much cases, such a tools are
not sufficient to solve the problem. Let us see some cases when it is
possible to compute eigenvalues/eigenvectors of a matrix.

\begin{example}\label{examatr1}
Let us see how to obtain (if there exists) an eigenvalue/eigenvector of a
diagonal matrix $A \in {\mathcal M}_{2}({\mathbb R}_{0}^{+})$,
\begin{eqnarray*}
A= \left[\begin{array}{cc}
a & 0\\
0 & b\\
\end{array}
\right],
\end{eqnarray*}
where $a \neq b$ not both zeros.

Let us assume first that $a, b > 0$.
Solving the equation $A v = \lambda v$, that is,
\begin{eqnarray*}
\left[\begin{array}{cc}
a & 0\\
0 & b\\
\end{array}
\right]
\left[\begin{array}{c}
x\\
y\\
\end{array}
\right]=  \left[\begin{array}{c}
\lambda x\\
\lambda y\\
\end{array}
\right],
\end{eqnarray*}
we obtain $\lambda = a$ with associated eigenvector $x(1, 0)$ and
$\lambda = b$ with associated eigenvector $y(0, 1)$.

If $a\neq 0$ and $b = 0$, then $\lambda = a$ with eigenvectors $x(1, 0)$.

If $a = 0$ and $b \neq 0$, then $\lambda = b$ with eigenvectors $y(0, 1)$.
\end{example}

\begin{example}\label{examatr2}
Let $A \in {\mathcal M}_{2}({\mathbb R}_{0}^{+})$ be a matrix of the form
\begin{eqnarray*}
A= \left[\begin{array}{cc}
a & b\\
0 & a\\
\end{array}
\right],
\end{eqnarray*}
where $a \neq b$ are positive real numbers. Let us solve the matrix equation:

\begin{eqnarray*}
\left[\begin{array}{cc}
a & b\\
0 & a\\
\end{array}
\right]
\left[\begin{array}{c}
x\\
y\\
\end{array}
\right]=  \left[\begin{array}{c}
\lambda x\\
\lambda y\\
\end{array}
\right].
\end{eqnarray*}
If $ y \neq 0$, $\lambda = a$; hence $b y = 0$, which
implies $b=0$, a contradiction. If $ y = 0$, $x \neq 0$; hence $\lambda = a$
with eigenvectors $(x, 0)$.
\end{example}

If $V$ and $W$ are semi-free semi-vector spaces then it is possible to define the
matrix of a semi-linear transformation $T: V \longrightarrow W$ as in the usual
case (vector spaces).

\begin{definition}\label{semi-free matrix}
Let $T: V \longrightarrow W$ be a semi-liner transformation between
semi-free semi-vector spaces with semi-basis $B_1$ and $B_2$, respectively.
Then the matrix $[T]_{B_1}^{B_2}$ is the matrix of the transformation $T$.
\end{definition}

\begin{theorem}\label{diagonalmatrix}
Let $V$ be a semi-free semi-vector space over ${\mathbb R}_{0}^{+}$ and
let $T:V\longrightarrow V$ be a semi-linear operator. Then $T$
admits a semi-basis $B = \{ v_1 , v_2 , \ldots , v_n \}$ such that ${[T]}_{B}^{B}$
is diagonal if and only if $B$ consists of eigenvectors of $T$.
\end{theorem}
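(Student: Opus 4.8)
The plan is to mimic the classical linear-algebra proof, the only extra care being the bookkeeping forced by Definition~\ref{defSBasis}. First I would record the preliminary facts about coordinates. Write $e_1,\dots,e_n$ for the standard tuples of ${[{\mathbb R}_{0}^{+}]}^{n}$. Since $V$ is simple and semi-free with semi-basis $B=\{v_1,\dots,v_n\}$, every nonzero $v\in V$ has a \emph{unique} expansion $v=\sum_i v^{(i)}v_i$ with strictly positive coefficients $v^{(i)}$; setting the coordinate tuple of $0_V$ to be $0$, one obtains a well-defined \emph{injective} coordinate map $[\cdot]_B\colon V\to{[{\mathbb R}_{0}^{+}]}^{n}$ with $[v_j]_B=e_j$, and, by Items~(2)--(3) of Definition~\ref{defSVS} together with the identity $0v=0_V$, this map is additive and satisfies $[\lambda v]_B=\lambda[v]_B$. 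As in the vector-space case (Definition~\ref{semi-free matrix}), the $j$-th column of ${[T]}_B^B$ is by definition $[T(v_j)]_B$.

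For the ``if'' direction I would assume $T(v_j)=\lambda_j v_j$ with $\lambda_j\in{\mathbb R}_{0}^{+}$ for each $j$ and compute the $j$-th column of ${[T]}_B^B$: $[T(v_j)]_B=[\lambda_j v_j]_B=\lambda_j e_j$ (which reads off correctly whether $\lambda_j>0$, giving the unique semi-basis expansion of $T(v_j)$, or $\lambda_j=0$, giving the zero column since then $T(v_j)=0_V$). Hence ${[T]}_B^B=\operatorname{diag}(\lambda_1,\dots,\lambda_n)$ is diagonal.

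For the ``only if'' direction I would assume ${[T]}_B^B=\operatorname{diag}(d_1,\dots,d_n)$ with $d_j\in{\mathbb R}_{0}^{+}$. By the definition of the matrix of $T$, the $j$-th column $d_j e_j$ of this matrix equals $[T(v_j)]_B$; since also $[d_j v_j]_B=d_j e_j$, injectivity of $[\cdot]_B$ yields $T(v_j)=d_j v_j$. As $v_j$ lies in a semi-basis of the simple space $V$ it is nonzero, so Definition~\ref{eigenvector} --- which permits the eigenvalue $0$ --- shows each $v_j$ is an eigenvector of $T$; thus $B$ consists of eigenvectors.

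The argument is essentially formal once the coordinate map is in place, so the step I expect to require the most attention is precisely that preliminary one: verifying that the semi-basis representation of Definition~\ref{defSBasis} furnishes a well-defined, injective, additive, positively homogeneous coordinate map, with the zero vector (and hence the eigenvalue $0$) handled by convention rather than as a degenerate expansion, the subtlety being that the coefficients $v^{(i)}$ are required to be \emph{strictly} positive. There is no deeper obstacle: unlike the computation of eigenvalues through $\det(A-\lambda I)$, which genuinely fails over ${\mathbb R}_{0}^{+}$, this statement concerns only the bookkeeping of coordinates and transfers verbatim from the vector-space setting, thanks to the additive cancellation law that makes $V$ regular.
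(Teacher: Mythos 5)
Your proposal is correct and follows essentially the same route as the paper's proof: expand $T(v_j)$ in the semi-basis and read off that the $j$-th column of ${[T]}_B^B$ is $\lambda_j e_j$, and conversely. The only difference is that you make explicit the well-definedness and injectivity of the coordinate map and the convention needed for zero coefficients (since Definition~\ref{defSBasis} requires strictly positive coefficients), details the paper glosses over by writing expansions such as $T(v_1)={\lambda}_1 v_1 + 0 v_2 + \ldots + 0 v_n$ without comment.
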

\begin{proof}
The proof is analogous to the case of vector spaces.
Let $B=\{ v_1 , v_2 , \ldots ,$ $v_n \}$ be a semi-basis of $V$ whose elements
are eigenvectors of $T$. We then have:
\begin{eqnarray*}
T(v_1)= {\lambda}_1 v_1 + 0 v_2 + \ldots + 0 v_n,\\
T(v_2)= 0 v_1 + {\lambda}_{2} v_2 + \ldots + 0 v_n,\\
\vdots\\
T(v_n)= 0 v_1 + 0 v_2 + \ldots + {\lambda}_{n} v_n,
\end{eqnarray*}
which implies that $[T]_{B}^{B}$ is of the form
\begin{eqnarray*}
[T]_{B}^{B}= \left[\begin{array}{ccccc}
{\lambda}_1 & 0 & 0 & \ldots & 0\\
0 & {\lambda}_2 & 0 & \ldots & 0\\
\vdots & \vdots & \vdots & \ldots & \vdots\\
0 & 0 & 0 & \ldots & {\lambda}_{n}\\
\end{array}
\right].
\end{eqnarray*}
On the other hand, let $B^{*}= \{ w_1 , w_2 , \ldots , w_n \}$ be a
semi-basis of $V$ such that $[T]_{B^{*}}^{B^{*}}$ is diagonal:
\begin{eqnarray*}
[T]_{B^{*}}^{B^{*}}=\left[\begin{array}{ccccc}
{\alpha}_1 & 0 & 0 & \ldots & 0\\
0 & {\alpha}_2 & 0 & \ldots & 0\\
\vdots & \vdots & \vdots & \ldots & \vdots\\
0 & 0 & 0 & \ldots & {\alpha}_{n}\\
\end{array}
\right];
\end{eqnarray*} thus,\\
\begin{eqnarray*}
T(w_1)= {\alpha}_1 w_1 + 0 w_2 + \ldots + 0 w_n = {\alpha}_1 w_1,\\
T(w_2)= 0 w_1 + {\alpha}_{2} w_2 + \ldots + 0 w_n = {\alpha}_{2} w_2,\\
\vdots\\
T(w_n)= 0 w_1 + 0 w_2 + \ldots + {\alpha}_{n} w_n = {\alpha}_{2} w_{n}.
\end{eqnarray*}
This means that $w_i$ are eigenvectors of $T$ with corresponding
eigenvalues ${\alpha}_{i}$, for all $i = 1, 2, \ldots , n$.
\end{proof}

\begin{definition}\label{kernel}
Let $T: V \longrightarrow W$ be a semi-linear transformation. The set
$\operatorname{Ker}(T)=\{ v \in V ; T(v)=0\}$ is called kernel of $T$.
\end{definition}

\begin{proposition}\label{subkernel}
Let $T: V \longrightarrow W$ be a semi-linear transformation.
Then the following hold:
\begin{itemize}
\item [ $\operatorname{(1)}$] $\operatorname{Ker}(T)$ is a semi-subspace of $V$;
\item [ $\operatorname{(2)}$] if $T$ is injective then
$\operatorname{Ker}(T) = \{0_{V}\}$;
\item [ $\operatorname{(3)}$] if $V$ has semi-dimension $1$ then
$\operatorname{Ker}(T) = \{0_{V}\}$ implies that $T$ is injective.
\end{itemize}
\end{proposition}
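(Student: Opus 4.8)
The plan is to verify each of the three claims in turn, relying on the semi-linearity of $T$ and, for the last item, on the structure of a one-dimensional semi-free semi-vector space together with the cancellation-law results already established in Proposition~\ref{prop1}.

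For item~$\operatorname{(1)}$, I would first note that $\operatorname{Ker}(T)$ is non-empty: since $T(0_V) = T(0 \cdot 0_V) = 0 \cdot T(0_V) = 0_W$, we have $0_V \in \operatorname{Ker}(T)$. Then I would check closure under the two semi-subspace operations of Definition~\ref{semi-subspace}. If $v_1, v_2 \in \operatorname{Ker}(T)$, then $T(v_1 + v_2) = T(v_1) + T(v_2) = 0_W + 0_W = 0_W$, so $v_1 + v_2 \in \operatorname{Ker}(T)$; and if $\lambda \in {\mathbb R}_0^{+}$ and $v \in \operatorname{Ker}(T)$, then $T(\lambda v) = \lambda T(v) = \lambda 0_W = 0_W$, so $\lambda v \in \operatorname{Ker}(T)$. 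This is routine and mirrors the vector-space argument.

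For item~$\operatorname{(2)}$, suppose $T$ is injective and let $v \in \operatorname{Ker}(T)$. Then $T(v) = 0_W = T(0_V)$, and injectivity forces $v = 0_V$; combined with $0_V \in \operatorname{Ker}(T)$ from item~$\operatorname{(1)}$, this gives $\operatorname{Ker}(T) = \{0_V\}$.

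For item~$\operatorname{(3)}$, which I expect to be the main obstacle since the usual linear-algebra proof relies on subtraction, I would argue as follows. Assume $\operatorname{Ker}(T) = \{0_V\}$ and that $V$ is semi-free of semi-dimension $1$, so $V$ has a semi-basis $\{b\}$; every nonzero $v \in V$ is written uniquely as $v = v^{(1)} b$ with $v^{(1)} \in {\mathbb R}^{+}$. Take $u, w \in V$ with $T(u) = T(w)$; I want $u = w$. If both are zero we are done, and if exactly one is zero, say $u = 0_V$ and $w \neq 0_V$, then $T(w) = T(0_V) = 0_W$ puts $w \in \operatorname{Ker}(T) = \{0_V\}$, a contradiction. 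So assume $u = \alpha b$ and $w = \beta b$ with $\alpha, \beta \in {\mathbb R}^{+}$. Then $\alpha T(b) = T(u) = T(w) = \beta T(b)$. Note $T(b) \neq 0_W$, for otherwise $b \in \operatorname{Ker}(T) = \{0_V\}$ contradicts that $b$ is a semi-basis element. Applying Proposition~\ref{prop1}$\operatorname{(2)}$ in the semi-vector space $W$ to the vector $T(b) \neq 0_W$ and the scalars $\alpha, \beta$, we conclude $\alpha = \beta$, hence $u = w$. Therefore $T$ is injective. The one place to be careful is that Proposition~\ref{prop1}$\operatorname{(2)}$ is stated for an arbitrary semi-vector space over ${\mathbb R}_0^{+}$, so it applies to $W$ verbatim; no embedding into a vector space is needed, in keeping with Remark~\ref{mainremark}$\operatorname{(3)}$.
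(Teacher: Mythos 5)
Your proof is correct and follows essentially the same route as the paper's: the same routine closure argument for item $\operatorname{(1)}$, the same injectivity argument for item $\operatorname{(2)}$, and for item $\operatorname{(3)}$ the same key step of writing $u=\alpha b$, $w=\beta b$, observing $T(b)\neq 0_W$ from the kernel hypothesis, and invoking Proposition~\ref{prop1}$\operatorname{(2)}$ in $W$. The only (welcome) differences are cosmetic: you derive $T(0_V)=0_W$ via $0\cdot 0_V$ rather than via regularity of $W$, and you explicitly dispose of the cases where $u$ or $w$ is the zero vector, which the paper's proof glosses over.
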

\begin{proof}
$\operatorname{(1)}$ We have $T(0_{V})= T(0_{V})+T(0_{V})$. Since $W$
is regular, it follows that $T(0_{V})=0_{W}$, which implies
$\operatorname{Ker}(T) \neq \emptyset$. If $u, v \in \operatorname{Ker}(T)$ and $\lambda \in
{\mathbb R}_{0}^{+}$, then $u + v \in \operatorname{Ker}(T)$ and $\lambda v \in
\operatorname{Ker}(T)$, which implies that $\operatorname{Ker}(T)$
is a semi-subspace of $V$.\\
$\operatorname{(2)}$ Since $T(0_{V})=0_{W}$, it follows that
$\{0_{V}\}\subseteq \operatorname{Ker}(T)$. On the other hand, let $ u \in
\operatorname{Ker}(T)$, that is, $T(u)=0_{W}$. Since $T$ is injective, one has $u = 0_{V}$.
Hence, $\operatorname{Ker}(T) = \{0_{V}\}$.\\
$\operatorname{(3)}$ Let $B=\{ v_0 \}$ be a semi-basis of $V$. Assume that
$T(u) = T(v)$, where $u, v \in V$ are such that $u = \alpha v_0$ and $v = \beta v_0 $. Hence,
$\alpha T(v_0) = \beta T(v_0 )$. Since $\operatorname{Ker}(T) = \{0_{V}\}$ and $v_0 \neq 0$,
it follows that $T(v_0) \neq 0$. From Item~$\operatorname{(2)}$ of Proposition~\ref{prop1},
one has $\alpha = \beta$, i.e., $u = v$.
\end{proof}

\begin{definition}\label{image}
Let $T: V \longrightarrow W$ be a semi-linear transformation.
The image of $T$ is the set of all vectors $w \in W$ such
that there exists $v \in V$ with $T(v)=w$, that is, $\operatorname{Im}(T)=\{
w \in W ; \exists \ v \in V \operatorname{with} T(v)=w\}$.
\end{definition}

\begin{proposition}\label{subImage}
Let $T: V \longrightarrow W$ be a semi-linear transformation.
Then the image of $T$ is a semi-subspace of $W$.
\end{proposition}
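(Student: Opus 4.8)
The plan is to verify directly the two closure conditions of Definition~\ref{semi-subspace}, together with non-emptiness. First I would observe that $\operatorname{Im}(T)$ is non-empty: as already shown in the proof of Proposition~\ref{subkernel}, the relation $T(0_V) = T(0_V) + T(0_V)$ combined with the regularity of $W$ forces $T(0_V) = 0_W$, so $0_W \in \operatorname{Im}(T)$. (Alternatively, one could deduce $T(0_V) = T(0\cdot 0_V) = 0\cdot T(0_V) = 0_W$ using the earlier observation that $0v = 0_V$ for every vector $v$.)

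Next, to check closure under addition, I would take $w_1, w_2 \in \operatorname{Im}(T)$ and pick preimages $v_1, v_2 \in V$ with $T(v_1) = w_1$ and $T(v_2) = w_2$. Since $V$ is closed under its vector addition, $v_1 + v_2 \in V$, and Item~$\operatorname{(1)}$ of Definition~\ref{semilineartrans} gives $T(v_1 + v_2) = T(v_1) + T(v_2) = w_1 + w_2$; hence $w_1 + w_2 \in \operatorname{Im}(T)$. For closure under scalar multiplication, I would take $\lambda \in {\mathbb R}_{0}^{+}$ and $w \in \operatorname{Im}(T)$, choose $v \in V$ with $T(v) = w$, and apply Item~$\operatorname{(2)}$ of Definition~\ref{semilineartrans}: $T(\lambda v) = \lambda T(v) = \lambda w$, with $\lambda v \in V$, so $\lambda w \in \operatorname{Im}(T)$. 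These three facts together establish that $\operatorname{Im}(T)$ is a semi-subspace of $W$.

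I do not expect any genuine obstacle here: the argument is the verbatim analogue of the classical vector-space proof and uses nothing beyond the two defining axioms of a semi-linear map and the regularity of $W$ (the latter only to pin down $T(0_V) = 0_W$). The only point worth a moment's care is that, in contrast with the vector-space setting, one cannot invoke symmetrizability of vectors; but since the verification never subtracts anything, this causes no difficulty whatsoever.
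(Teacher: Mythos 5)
Your proposal is correct and follows exactly the same route as the paper's proof: non-emptiness via $T(0_V)=0_W$ (obtained from regularity of $W$), followed by closure under addition and scalar multiplication using the two axioms of a semi-linear map. The paper merely states these closure checks as "easy to see," whereas you spell them out; there is no substantive difference.
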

\begin{proof}
The set $\operatorname{Im}(T)$ is non-empty because $T(0_{V})=0_{W}$. It is
easy to see that if $w_1 , w_2 \in \operatorname{Im}(T)$ and $\lambda \in
{\mathbb R}_{0}^{+}$, then $ w_1 + w_2 \in \operatorname{Im}(T)$ and
$\lambda w_1 \in \operatorname{Im}(T)$.
\end{proof}

\begin{theorem}\label{isosemi}
Let $V$ be a $n$-dimensional semi-free semi-vector space over
${\mathbb R}_{0}^{+}$. Then $V$ is isomorphic to $({\mathbb R}_{0}^{+})^{n}$.
\end{theorem}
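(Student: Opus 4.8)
The plan is to use the semi-basis structure directly to build an explicit isomorphism, mimicking the vector-space argument but checking at each step that nothing outside $\mathbb{R}_0^+$ is invoked. Let $B = \{v_1, v_2, \ldots, v_n\}$ be a semi-basis of $V$, which exists since $V$ is semi-free of semi-dimension $n$. By Definition~\ref{defSBasis}, every nonzero $v \in V$ has a unique expression $v = \sum_{i \in I_v} v^{(i)} b_i$ with strictly positive coefficients; I would first observe that, allowing zero coefficients, this gives every $v \in V$ (including $0_V$, with all coefficients zero) a \emph{unique} representation $v = \sum_{i=1}^n \alpha_i v_i$ with $\alpha_i \in \mathbb{R}_0^+$. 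Uniqueness with zero coefficients allowed follows from the uniqueness in Definition~\ref{defSBasis} together with Proposition~\ref{prop1}(1): if $\sum \alpha_i v_i = \sum \beta_i v_i$, cancel the common terms using the additive cancellation law to reduce to two strictly-positive representations of the same vector, forcing equality of index sets and coefficients.

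Next I would define $\varphi : V \to (\mathbb{R}_0^+)^n$ by $\varphi(v) = (\alpha_1, \ldots, \alpha_n)$ where $v = \sum_{i=1}^n \alpha_i v_i$ is the representation just described, and the candidate inverse $\psi : (\mathbb{R}_0^+)^n \to V$ by $\psi(\alpha_1, \ldots, \alpha_n) = \sum_{i=1}^n \alpha_i v_i$. That $\psi$ is well-defined is immediate; that $\varphi$ and $\psi$ are mutually inverse is exactly the existence-and-uniqueness statement above. It remains to check that $\varphi$ (equivalently $\psi$) is semi-linear in the sense of Definition~\ref{semilineartrans}. Additivity: if $v = \sum \alpha_i v_i$ and $w = \sum \beta_i v_i$, then using commutativity and associativity of $+$ in $V$ and axiom~(3) of Definition~\ref{defSVS}, $v + w = \sum (\alpha_i + \beta_i) v_i$, and by uniqueness $\varphi(v+w) = \varphi(v) + \varphi(w)$. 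Homogeneity: for $\lambda \in \mathbb{R}_0^+$, axiom~(4) gives $\lambda v = \sum (\lambda \alpha_i) v_i$, so $\varphi(\lambda v) = \lambda \varphi(v)$. Hence $\varphi$ is a semi-linear bijection with semi-linear inverse, i.e., a semi-isomorphism, so $V \cong (\mathbb{R}_0^+)^n$.

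The one genuinely delicate point — the main obstacle — is the lemma that the representation with nonnegative coefficients is unique, since Definition~\ref{defSBasis} only asserts uniqueness among representations with \emph{strictly positive} coefficients over a variable finite index set. The argument has to move an apparent mismatch (some $\alpha_i = 0$ on one side but not the other) across the equality; this is where the additive cancellation law of Definition~\ref{defSVS}(1) is essential, because without symmetrics one cannot simply subtract. Concretely: from $\sum_{i=1}^n \alpha_i v_i = \sum_{i=1}^n \beta_i v_i$, set $\gamma_i = \min(\alpha_i, \beta_i)$, cancel $\sum \gamma_i v_i$ from both sides (legitimate since it is the same vector added to both, by axiom~(3) and cancellation), and obtain $\sum_{i : \alpha_i > \beta_i}(\alpha_i - \beta_i) v_i = \sum_{i : \beta_i > \alpha_i}(\beta_i - \alpha_i) v_i$, now with strictly positive coefficients on disjoint index sets. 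If either side is nonzero, Definition~\ref{defSBasis} forces the two (disjoint!) index sets to coincide, a contradiction unless both are empty; hence $\alpha_i = \beta_i$ for all $i$. (The edge case where a side is the empty sum equal to $0_V$ is handled by Proposition~\ref{prop1}(1), which prevents a nontrivial positive combination of semi-basis vectors from vanishing — itself a consequence of the uniqueness clause applied to $0_V$'s empty representation.) Everything else is the routine bookkeeping of the vector-space proof, so I would state it briefly.
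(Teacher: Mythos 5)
Your proposal is correct and follows essentially the same route as the paper: the paper's proof simply fixes a semi-basis, sends $v=\sum_i a_i v_i$ to its coordinate tuple in $({\mathbb R}_{0}^{+})^{n}$, and declares bijectivity and semi-linearity "easy to see." The only difference is that you supply the details the paper omits, in particular the cancellation-law argument showing that the nonnegative-coefficient representation is unique even though Definition~\ref{defSBasis} only asserts uniqueness for strictly positive coefficients; that lemma is needed for well-definedness and injectivity, and your treatment of it is sound.
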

\begin{proof}
Let $B = \{ v_1 , v_2 , \ldots , v_n \}$ be a semi-basis of $V$ and consider
the canonical semi-basis
$e_i = (0, 0, \ldots , $ $0, \underbrace{1}_{i}, 0, \ldots, 0)$
of $({\mathbb R}_{0}^{+})^{n}$, where $i=1, 2, \ldots , n$.
Define the map $T:V \longrightarrow ({\mathbb R}_{0}^{+})^{n}$ as follows:
for each $v = \displaystyle\sum_{i=1}^{n}a_i v_i \in V$, put
$T(v) = \displaystyle\sum_{i=1}^{n}a_i e_i$. It is easy to see that $T$
is bijective semi-linear transformation, i.e., $V$ is isomorphic to
$({\mathbb R}_{0}^{+})^{n}$, as required.
\end{proof}

\subsection{Complete Semi-Vector Spaces}\label{subsec1}

We here define and study complete semi-vector spaces, i.e.,
semi-vector spaces whose norm (inner product) induces a metric under
which the space is complete.

\begin{definition}\label{semiBanach}
Let $V$ be a semi-vector space over ${\mathbb R}_{0}^{+}$. If there exists a
norm $\| \ \|:V \longrightarrow {\mathbb R}_{0}^{+}$ on $V$ we say that $V$ is a
normed semi-vector space (or normed semi-space, for short). If the norm defines a metric
on $V$ under which $V$ is complete then $V$ is said to be Banach semi-vector space.
\end{definition}

\begin{definition}\label{semiHilbert}
Let $V$ be a semi-vector space over ${\mathbb R}_{0}^{+}$. If there
exists an inner product $\langle \ , \ \rangle:V\times V
\longrightarrow {\mathbb R}_{0}^{+}$ on $V$ then $V$ is an inner product
semi-vector space (or inner product semi-space).
If the inner product defines a metric on $V$ under which
$V$ is complete then $V$ is said to
be Hilbert semi-vector space.
\end{definition}

The well-known norms on ${\mathbb R}^n$ are also norms on
$[{\mathbb R}_{0}^{+}]^{n}$, as we show in the next propositions.

\begin{proposition}\label{R+1}
Let $V = [{\mathbb R}_{0}^{+}]^{n}$ be the Euclidean semi-vector space
(over ${\mathbb R}_{0}^{+}$) of
semi-dimension $n$ . Define the function $\| \ \|:V \longrightarrow
{\mathbb R}_{0}^{+}$ as follows: if $x = (x_1 , x_2 , \ldots ,$ $x_n ) \in V$,
put $\| x \|=\sqrt{x_1^2 + x_2^2 + \ldots + x_n^2}$. Then $\| \ \|$ is a
norm on $V$, called the Euclidean norm on $V$.
\end{proposition}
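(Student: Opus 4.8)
The plan is to verify, one at a time, the three defining properties of a norm on the semi-vector space $V=[{\mathbb R}_{0}^{+}]^{n}$: that $\|\cdot\|$ is a well-defined map into ${\mathbb R}_{0}^{+}$ vanishing exactly at $0_{V}$; that $\|\lambda x\|=\lambda\|x\|$ for every $\lambda\in{\mathbb R}_{0}^{+}$; and that $\|x+y\|\le\|x\|+\|y\|$ for all $x,y\in V$. None of these requires new ideas beyond the classical Euclidean case, so the work is essentially checking that everything stays inside ${\mathbb R}_{0}^{+}$.

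First I would observe well-definedness: if $x=(x_1,\ldots,x_n)$ with each $x_i\ge 0$, then $x_1^2+\cdots+x_n^2\ge 0$ is a nonnegative real, so its nonnegative square root exists and lies in ${\mathbb R}_{0}^{+}$. For positive definiteness, $\|x\|=0$ forces $x_1^2+\cdots+x_n^2=0$, and since every summand is $\ge 0$ this gives $x_i=0$ for all $i$, i.e. $x=0_{V}$; the converse $\|0_{V}\|=0$ is immediate. For homogeneity, given $\lambda\in{\mathbb R}_{0}^{+}$ one computes $\|\lambda x\|=\sqrt{\lambda^2 x_1^2+\cdots+\lambda^2 x_n^2}=\sqrt{\lambda^2}\,\sqrt{x_1^2+\cdots+x_n^2}=\lambda\|x\|$, where $\sqrt{\lambda^2}=\lambda$ holds precisely because $\lambda$ is nonnegative, so no absolute value is needed.

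The only step with any content is the triangle inequality, and I would deduce it from the classical case rather than redo the estimate: since $[{\mathbb R}_{0}^{+}]^{n}\subset{\mathbb R}^{n}$ and $\|\cdot\|$ is just the restriction to this subset of the usual Euclidean norm on ${\mathbb R}^{n}$, the inequality $\|x+y\|\le\|x\|+\|y\|$ for $x,y\in V$ holds because it holds for all vectors of ${\mathbb R}^{n}$. For a self-contained argument one expands $\|x+y\|^2=\sum_i (x_i+y_i)^2=\|x\|^2+2\sum_i x_i y_i+\|y\|^2$, bounds $\sum_i x_i y_i\le\|x\|\,\|y\|$ by Cauchy--Schwarz, and takes square roots (valid, both sides being nonnegative).

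I expect the main (mild) obstacle to be purely bookkeeping: one must make sure that no step covertly invokes additive inverses of elements of $V$, in accordance with Remark~\ref{mainremark}. This is harmless, because every arithmetic manipulation above is performed among the real-number coordinates $x_i, y_i, \lambda$, which live in ${\mathbb R}$; the additive structure of $V$ itself is never inverted. The only genuine points to watch are that the output always lands in ${\mathbb R}_{0}^{+}$ and that homogeneity is (and need only be) checked for nonnegative scalars.
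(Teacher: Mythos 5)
Your proposal is correct and follows essentially the same route as the paper: the first two norm axioms are immediate, and the triangle inequality is obtained from the Cauchy--Schwarz inequality $\sum_i x_i y_i \le \bigl(\sum_i x_i^2\bigr)^{1/2}\bigl(\sum_i y_i^2\bigr)^{1/2}$ applied to the expansion of $\|x+y\|^2$. Your added remarks on well-definedness and on $\sqrt{\lambda^2}=\lambda$ for $\lambda\ge 0$ are just a more explicit spelling-out of what the paper calls ``clear.''
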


\begin{proof}
It is clear that $\| x \| = 0$ if and only if $x=0$ and for all
$\alpha \in {\mathbb R}_{0}^{+}$ and $x \in V$,
$\| \alpha x \| = |\alpha | \| x \|$. To show the triangle inequality it is
sufficient to apply the
Cauchy-Schwarz inequality in ${\mathbb R}_{0}^{+}$: if
$x = (x_1 , x_2 , \ldots , x_n )$ and $y = (y_1 , y_2 , \ldots , y_n )$ are
semi-vectors in $V$ then $\displaystyle\sum_{i=1}^{n} x_i y_i \leq
{\left(\displaystyle\sum_{i=1}^{n} x_i^2 \right)}^{1/2} \cdot
{\left(\displaystyle\sum_{i=1}^{n} y_i^2 \right)}^{1/2}$.
\end{proof}

In the next result we show that the Euclidean norm on $[{\mathbb R}_{0}^{+}]^{n}$
generates the Euclidean metric on it.

\begin{proposition}\label{R+1a}
Let $x = (x_1 , x_2 , \ldots ,x_n )$, $y = (y_1 , y_2 , \ldots , y_n )$ be
semi-vectors in $V = [{\mathbb R}_{0}^{+}]^{n}$. Define the function
$d:V \times V \longrightarrow {\mathbb R}_{0}^{+}$ as follows: for every fixed $i$,
if $x_i = y_i$ put $c_i =0$; if $x_i \neq y_i$, put
${\varphi}_i = {\psi}_i + c_i$, where ${\varphi}_i =\max \{x_i, y_i \}$ and
${\psi}_i =\min \{ x_i , y_i\}$ (in this case, $c_i > 0$); then consider
$d(x, y) = \sqrt{c_1^2 + \ldots + c_n^2}$. The function $d$ is a metric on $V$.
\end{proposition}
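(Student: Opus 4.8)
The plan is to verify the three metric axioms, the only delicate one being the triangle inequality. First I would record that $d$ is well defined: for each $i$, the numbers $\varphi_i=\max\{x_i,y_i\}$ and $\psi_i=\min\{x_i,y_i\}$ satisfy $\psi_i\le\varphi_i$ in ${\mathbb R}_0^+$, so there is a unique $c_i\in{\mathbb R}_0^+$ with $\psi_i+c_i=\varphi_i$, and moreover $c_i>0$ precisely when $x_i\neq y_i$. Hence $d(x,y)=\sqrt{c_1^2+\cdots+c_n^2}\in{\mathbb R}_0^+$. Non-negativity is then immediate, and $d(x,y)=0$ forces every $c_i=0$, hence $x_i=y_i$ for all $i$, i.e.\ $x=y$; the converse is clear. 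Symmetry is equally direct, since $\max$ and $\min$ do not depend on the order of their arguments, so the $c_i$ attached to $(x,y)$ coincide with those attached to $(y,x)$.

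The core of the argument is the triangle inequality $d(x,z)\le d(x,y)+d(y,z)$. Fix $x,y,z\in V$ and, for each coordinate $i$, let $a_i$, $b_i$, $c_i$ be the nonnegative reals produced by the construction for the pairs $(x,y)$, $(y,z)$, $(x,z)$ respectively. The key claim --- and the step I expect to be the main obstacle, since subtraction is unavailable --- is the coordinatewise bound $c_i\le a_i+b_i$. I would prove it by a short case analysis on the position of $y_i$ relative to $x_i$ and $z_i$, using only the additive cancellation law in ${\mathbb R}_0^+$. For instance, if $x_i\le y_i\le z_i$ then $x_i+a_i=y_i$, $y_i+b_i=z_i$ and $x_i+c_i=z_i$, so $x_i+(a_i+b_i)=z_i=x_i+c_i$ and cancellation gives $c_i=a_i+b_i$; if instead $y_i\le x_i\le z_i$ then $y_i+a_i=x_i$, $y_i+b_i=z_i$, $x_i+c_i=z_i$, whence $y_i+(a_i+c_i)=z_i=y_i+b_i$ and so $a_i+c_i=b_i$, giving $c_i\le a_i+b_i$; the case $x_i\le z_i\le y_i$ yields $a_i=b_i+c_i$, hence $c_i\le a_i$; and the remaining orderings follow from these three by the symmetry already noted (swapping the roles of $x$ and $z$ interchanges $a_i$ and $b_i$ while fixing $c_i$).

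With the pointwise inequality in hand, I would finish as follows. Consider the semi-vectors $a=(a_1,\dots,a_n)$ and $b=(b_1,\dots,b_n)$ in $V=[{\mathbb R}_0^+]^n$. Since $0\le c_i\le a_i+b_i$ for every $i$, monotonicity of squaring and of the square root on ${\mathbb R}_0^+$ gives $d(x,z)=\sqrt{\sum_{i=1}^n c_i^2}\le\sqrt{\sum_{i=1}^n (a_i+b_i)^2}=\|a+b\|$, where $\|\ \|$ is the Euclidean norm of Proposition~\ref{R+1}. By that proposition $\|\ \|$ is a norm on $V$, so $\|a+b\|\le\|a\|+\|b\|=\sqrt{\sum_{i=1}^n a_i^2}+\sqrt{\sum_{i=1}^n b_i^2}=d(x,y)+d(y,z)$. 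Chaining the two estimates yields the triangle inequality, and therefore $d$ is a metric on $V$.
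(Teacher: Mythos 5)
Your proof is correct, and it is organized more carefully than the paper's own argument. Both proofs rest on the same two ingredients: writing the coordinate gaps additively via the ordering of $x_i,y_i,z_i$ (so that cancellation substitutes for subtraction) and then invoking the Cauchy--Schwarz/Minkowski inequality for sums of nonnegative reals. The difference is in how the case analysis is handled. The paper lists the six possible orderings but then only works out the single ``worst case'' $x_i<y_i<z_i$ for all $i$, where $c_i=a_i+b_i$ exactly, and reduces the resulting inequality directly to Cauchy--Schwarz; the reduction of the general situation to this worst case is asserted rather than proved. You instead establish the coordinatewise bound $c_i\le a_i+b_i$ in every ordering (three cases plus a symmetry argument), and then conclude by monotonicity of $t\mapsto t^2$ together with the triangle inequality for the Euclidean norm already proved in Proposition~\ref{R+1}. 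What your route buys is a complete justification of the step the paper glosses over, and a cleaner final assembly that reuses Proposition~\ref{R+1} instead of re-deriving Cauchy--Schwarz; what the paper's route buys is brevity. Your coordinatewise lemma $c_i\le a_i+b_i$ is exactly the statement needed to make the ``worst case'' heuristic rigorous, so your write-up can be read as a repaired version of the paper's proof.
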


\begin{remark}
Note that in Proposition~\ref{R+1a} we could have defined $c_i$
simply by the nonnegative real number satisfying $\max
\{x_i, y_i \}=\min \{x_i, y_i \} + c_i$. However, we prefer to separate
the cases when $c_i=0$ and $c_i > 0$ in order to improve the
readability of this paper.
\end{remark}

\begin{proof}
It is easy to see that $d(x, y)=0$ if and only if $x=y$ and $d(x, y)=d(y,x)$.

We will next prove the triangle inequality. To do this, let
$x = (x_1 , x_2 , \ldots ,x_n )$, $y = (y_1 , y_2 , \ldots , y_n )$
and $z = (z_1 , z_2 , \ldots , z_n )$ be semi-vectors in $V = [{\mathbb R}_{0}^{+}]^{n}$.
We look first at a fixed $i$. If
$x_i = y_i = z_i$ or if two of them are equal then $d(x_i , z_i ) \leq d(x_i , y_i ) +
d(y_i, z_i )$. Let us then assume that $x_i$, $y_i$ and $z_i$ are pairwise distinct.
We have to analyze the six cases: $\operatorname{(1)}$ $x_i < y_i < z_i$;
$\operatorname{(2)}$ $x_i <  z_i < y_i$; $\operatorname{(3)}$ $y_i < x_i < z_i$;
$\operatorname{(4)}$ $y_i < z_i < x_i$; $\operatorname{(5)}$ $z_i
< x_i < y_i$; $\operatorname{(6)}$ $z_i < y_i < x_i$.
In order to verify the triangle inequality we will see what occurs in the worst cases.
More precisely, we assume that for all $i=1, 2, \ldots , n$ we have
$x_i < y_i < z_i$ or, equivalently, $z_i < y_i < x_i$. Since both cases are analogous we only
verify the (first) case $x_i < y_i < z_i$, for all $i$. In such cases there exist
positive real numbers $a_i$, $b_i$, for all $i=1, 2, \ldots , n$,
such that $y_i = x_i + a_i$ and $z_i = y_i + b_i$, which implies
$z_i = x_i + a_i + b_i$. We need to show that
$d(x, z) \leq d(x, y) + d(y, z)$, i.e.,
${\left(\displaystyle\sum_{i=1}^{n}(a_i + b_i)^2\right)}^{1/2} \leq
{\left(\displaystyle\sum_{i=1}^{n} a_i^2\right)}^{1/2} +
{\left(\displaystyle\sum_{i=1}^{n} b_i^2\right)}^{1/2}$.
The last inequality is equivalent to the inequality
$\displaystyle\sum_{i=1}^{n} (a_i + b_i)^2 \leq
\displaystyle\sum_{i=1}^{n} a_i^2 + \displaystyle\sum_{i=1}^{n} b_i^2 +
2{\left(\displaystyle\sum_{i=1}^{n} a_i^2 \right)}^{1/2}
\cdot {\left(\displaystyle\sum_{i=1}^{n}
b_i^2\right)}^{1/2}$. Again, the
last inequality is equivalent to $\displaystyle\sum_{i=1}^{n} a_i b_i \leq
{\left(\displaystyle\sum_{i=1}^{n} a_i^2\right)}^{1/2}\cdot {\left(\displaystyle\sum_{i=1}^{n}
b_i^2\right)}^{1/2}$, which is the Cauchy-Schwarz inequality in
${\mathbb R}_{0}^{+}$. Therefore, $d$ satisfies the triangle inequality, hence it is a
metric on $V$.
\end{proof}

\begin{remark}
Note that Proposition~\ref{R+1a} means that the Euclidean
norm on $[{\mathbb R}_{0}^{+}]^{n}$
(see Proposition~\ref{R+1}) generates the Euclidean metric on $[{\mathbb R}_{0}^{+}]^{n}$.
This result is analogous to the fact that every norm defined on vector spaces
generates a metric on it. Further, a semi-vector space $V$ is Banach
(see Definition~\ref{semiBanach}) if the norm generates a metric under which
every Cauchy sequence in $V$ converges to an element of $V$.
\end{remark}

\begin{proposition}\label{R+1b}
Let $V = [{\mathbb R}_{0}^{+}]^{n}$ and define the function
$\langle \ , \ \rangle:V\times V \longrightarrow
{\mathbb R}_{0}^{+}$ as follows: if $u = (x_1 , x_2 , \ldots , x_n )$ and
$v = (y_1 , y_2 , \ldots , y_n )$ are semi-vectors in $V$, put
$\langle u , v \rangle = \displaystyle\sum_{i=1}^{n}x_i y_i$. Then
$\langle \ , \ \rangle$ is an inner product on $V$,
called dot product.
\end{proposition}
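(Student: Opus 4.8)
The plan is to verify directly that $\langle \ , \ \rangle$ satisfies the three defining properties of an inner product on a semi-vector space over ${\mathbb R}_{0}^{+}$: symmetry, semi-linearity in each argument, and positive definiteness. Since subtraction is unavailable, no polarization-type reformulation is possible, but none is needed: each property collapses to an elementary identity inside the semi-field ${\mathbb R}_{0}^{+}$, exactly as the corresponding computation proves bilinearity of the usual dot product on ${\mathbb R}^{n}$.

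First I would check symmetry, which is immediate: for $u = (x_1 , x_2 , \ldots , x_n )$ and $v = (y_1 , y_2 , \ldots , y_n )$ in $V$ we have $\langle u , v \rangle = \sum_{i=1}^{n} x_i y_i = \sum_{i=1}^{n} y_i x_i = \langle v , u \rangle$ by commutativity of multiplication in ${\mathbb R}_{0}^{+}$. Next, for fixed $v$ the map $u \mapsto \langle u , v \rangle$ is a semi-linear transformation $V \to {\mathbb R}_{0}^{+}$ (in the sense of Definition~\ref{semilineartrans}, with ${\mathbb R}_{0}^{+}$ viewed as a semi-vector space over itself): given $u = (x_1 , \ldots , x_n )$, $u' = (x_1' , \ldots , x_n' )$ and $\lambda \in {\mathbb R}_{0}^{+}$, distributivity and associativity in ${\mathbb R}_{0}^{+}$ yield $\langle u + u' , v \rangle = \sum_{i=1}^{n} (x_i + x_i') y_i = \langle u , v \rangle + \langle u' , v \rangle$ and $\langle \lambda u , v \rangle = \sum_{i=1}^{n} (\lambda x_i) y_i = \lambda \langle u , v \rangle$. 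Combining this with the symmetry just established gives the same additivity and homogeneity in the second slot, so $\langle \ , \ \rangle$ is semi-linear in each argument.

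It remains to prove positive definiteness, which is the only step that really uses the structure of ${\mathbb R}_{0}^{+}$ (its order and the absence of zero divisors), and hence the one to treat with some care. For $u = (x_1 , \ldots , x_n ) \in V$ we have $\langle u , u \rangle = \sum_{i=1}^{n} x_i^2$, a finite sum of squares of nonnegative reals; thus $\langle u , u \rangle \geq 0$, and such a sum vanishes precisely when each summand does, so $\langle u , u \rangle = 0$ forces $x_i^2 = 0$, hence $x_i = 0$, for every $i$, i.e. $u = 0_V$. The converse $\langle 0_V , 0_V \rangle = 0$ is clear. Having checked symmetry, semi-linearity in each argument, and positive definiteness, we conclude that $\langle \ , \ \rangle$ is an inner product on $V$, as claimed.
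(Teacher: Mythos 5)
Your verification is correct and is exactly the routine check the paper has in mind: its own proof of Proposition~\ref{R+1b} is simply the remark that ``the proof is immediate,'' and your three steps (symmetry, semi-linearity in each slot, positive definiteness via a sum of squares of nonnegative reals) are the details being omitted. No gap and no divergence from the paper's approach.
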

\begin{proof}
The proof is immediate.
\end{proof}

\begin{proposition}\label{R+1c}
The dot product on $V = [{\mathbb R}_{0}^{+}]^{n}$ generates the
Euclidean norm on $V$.
\end{proposition}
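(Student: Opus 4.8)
The plan is simply to exhibit the norm induced by the dot product and recognize it as the Euclidean norm of Proposition~\ref{R+1}. First I would recall that, exactly as in the classical setting, an inner product $\langle\,,\,\rangle$ on a semi-vector space $V$ induces a candidate norm via $\|u\| := \sqrt{\langle u, u\rangle}$; this makes sense here because $\langle u,u\rangle = \sum_{i=1}^{n} x_i^2 \in {\mathbb R}_{0}^{+}$, so the square root is a well-defined nonnegative real number. Thus for $u = (x_1, x_2, \ldots, x_n) \in V$ one computes
\begin{eqnarray*}
\sqrt{\langle u, u \rangle} = \sqrt{\displaystyle\sum_{i=1}^{n} x_i x_i} = \sqrt{x_1^2 + x_2^2 + \ldots + x_n^2},
\end{eqnarray*}
which is precisely the expression defining the Euclidean norm $\| \ \|$ on $V$ from Proposition~\ref{R+1}.

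Next I would invoke Proposition~\ref{R+1} to conclude that this function is genuinely a norm on $V$: the three norm axioms (positive definiteness, absolute homogeneity with respect to scalars in ${\mathbb R}_{0}^{+}$, and the triangle inequality) have already been verified there, the last via the Cauchy--Schwarz inequality in ${\mathbb R}_{0}^{+}$. Hence the dot product of Proposition~\ref{R+1b} generates a norm, and that norm coincides pointwise with the Euclidean norm, which is the assertion.

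I do not anticipate a genuine obstacle here: the statement is essentially a bookkeeping identification of two formulas, and all the analytic content (that $\sqrt{\sum x_i^2}$ satisfies the norm axioms) is already available from Proposition~\ref{R+1}. The only point that deserves an explicit word is that the construction $u \mapsto \sqrt{\langle u,u\rangle}$ stays inside ${\mathbb R}_{0}^{+}$ — which it does, since squares and sums of nonnegative reals are nonnegative and the real square root of a nonnegative real is nonnegative — so that nothing about the restricted scalar semi-field causes trouble.

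\begin{proof}
By definition of the dot product (Proposition~\ref{R+1b}), for $u = (x_1 , x_2 , \ldots , x_n ) \in V$ we have $\langle u, u \rangle = \displaystyle\sum_{i=1}^{n} x_i^2 \in {\mathbb R}_{0}^{+}$, so the nonnegative real number $\sqrt{\langle u, u \rangle}$ is well defined. Moreover,
\begin{eqnarray*}
\sqrt{\langle u, u \rangle} = \sqrt{x_1^2 + x_2^2 + \ldots + x_n^2} = \| u \|,
\end{eqnarray*}
where $\| \ \|$ is the Euclidean norm on $V$ introduced in Proposition~\ref{R+1}. Since that proposition establishes that $\| \ \|$ is indeed a norm on $V$, the dot product generates the Euclidean norm on $V$, as claimed.
\end{proof}
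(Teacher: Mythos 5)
Your proof is correct and follows essentially the same route as the paper's: define $\|x\| = \sqrt{\langle x,x\rangle}$, observe it equals $\sqrt{x_1^2+\cdots+x_n^2}$, and identify this with the Euclidean norm of Proposition~\ref{R+1}. Your added remarks on well-definedness within ${\mathbb R}_{0}^{+}$ are harmless elaboration of the same one-line argument.
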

\begin{proof}
If $x= (x_1 , x_2 , \ldots , x_n ) \in V$, define the norm of $x$ by
$\| x \|=\sqrt{\langle x, x\rangle}$. Note that the norm is exactly
the Euclidean norm given in Proposition~\ref{R+1}.
\end{proof}

\begin{remark}
We observe that if an inner product on a semi-vector space $V$ generates
a norm $\| \ \|$ and such a norm generates a metric $d$ on $V$,
then $V$ is a Hilbert space (according to Definition~\ref{semiHilbert}) if every Cauchy
sequence in $V$ converges w.r.t. $d$ to an element of $V$.
\end{remark}

\begin{proposition}\label{R+2}
Let $V = [{\mathbb R}_{0}^{+}]^{n}$ and define the function ${\| \ \|}_1:V
\longrightarrow {\mathbb R}_{0}^{+}$ as follows: if $x = (x_1 ,
x_2 , \ldots ,$ $x_n ) \in V$,  ${\| x \|}_1=\displaystyle\sum_{i=1}^{n} x_i$.
Then ${\| x \|}_1$ is a norm on $V$.
\end{proposition}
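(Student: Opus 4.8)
The plan is to verify directly the three defining properties of a norm on a semi-vector space over ${\mathbb R}_{0}^{+}$: positive-definiteness, absolute homogeneity, and the triangle inequality. Since every coordinate $x_i$ of a semi-vector $x \in [{\mathbb R}_{0}^{+}]^{n}$ is a nonnegative real number, each of these reduces to an elementary computation in ${\mathbb R}_{0}^{+}$; there is no genuine obstacle here, and the only point worth emphasizing is that no subtraction of coordinates is ever required, which is exactly what makes the argument legitimate in the semi-vector space setting.

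First I would check positive-definiteness. If $x = (x_1, \ldots, x_n)$ with all $x_i \geq 0$, then $\|x\|_1 = \sum_{i=1}^{n} x_i \geq 0$, and this sum vanishes if and only if every summand vanishes, i.e. $x_i = 0$ for all $i$, which is precisely $x = 0_{V}$. Next I would verify absolute homogeneity: for $\alpha \in {\mathbb R}_{0}^{+}$ one has $|\alpha| = \alpha$, and $\|\alpha x\|_1 = \sum_{i=1}^{n} \alpha x_i = \alpha \sum_{i=1}^{n} x_i = |\alpha|\,\|x\|_1$, using only distributivity in the semi-field.

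Finally, for the triangle inequality, let $x = (x_1, \ldots, x_n)$ and $y = (y_1, \ldots, y_n)$ be semi-vectors in $V$. Then $\|x + y\|_1 = \sum_{i=1}^{n} (x_i + y_i) = \sum_{i=1}^{n} x_i + \sum_{i=1}^{n} y_i = \|x\|_1 + \|y\|_1$, so in fact equality holds and the triangle inequality is trivially satisfied. Thus ${\|\ \|}_1$ is a norm on $V$. I would close with a brief remark that, unlike the Euclidean norm of Proposition~\ref{R+1} whose triangle inequality needs the Cauchy–Schwarz inequality, here the "inequality" degenerates to an identity because all coordinates are nonnegative — so the main obstacle one might have anticipated (proving a genuine inequality) simply does not arise.
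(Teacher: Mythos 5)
Your proof is correct and matches the paper's approach: the paper simply states ``The proof is direct,'' and your verification of positive-definiteness, homogeneity, and the triangle inequality (which, as you rightly observe, collapses to an equality because all coordinates are nonnegative) is exactly the direct check the authors had in mind. Nothing is missing.
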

\begin{proof}
The proof is direct.
\end{proof}

\begin{proposition}\label{R+2a}
Let $x = (x_1 , x_2 , \ldots ,x_n )$, $y = (y_1 , y_2 , \ldots , y_n )$ be
semi-vectors in $V = [{\mathbb R}_{0}^{+}]^{n}$. Define the function
$d_1:V \times V \longrightarrow {\mathbb R}_{0}^{+}$ in the following way. For every fixed $i$,
if $x_i = y_i$, put $c_i =0$; if $x_i \neq y_i$, put
${\varphi}_i = {\psi}_i + c_i$, where ${\varphi}_i =\max \{x_i, y_i \}$ and
${\psi}_i =\min \{ x_i , y_i\}$. Let us consider that
$d_1 (x, y) = \displaystyle\sum_{i=1}^{n} c_i $.
Then the function $d_1$ is a metric on $V$ derived from the norm
${\| \ \|}_1$ shown in Proposition~\ref{R+2}.
\end{proposition}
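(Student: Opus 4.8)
The plan is to verify the three metric axioms for $d_1$, closely paralleling the treatment of the Euclidean metric $d$ in Proposition~\ref{R+1a}. First I would record the basic observation that, for each fixed $i$, the number $c_i$ is exactly the (unique) nonnegative real number satisfying $\max\{x_i,y_i\} = \min\{x_i,y_i\} + c_i$; in particular $c_i \geq 0$, and $c_i$ is a symmetric function of the pair $(x_i,y_i)$ since $\max$ and $\min$ are. Hence $d_1(x,y) = \sum_{i=1}^{n} c_i \geq 0$; and $d_1(x,y) = 0$ forces every $c_i = 0$, i.e. $\max\{x_i,y_i\} = \min\{x_i,y_i\}$ and thus $x_i = y_i$ for all $i$, so $x = y$, the converse being immediate. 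Symmetry $d_1(x,y) = d_1(y,x)$ then follows coordinatewise from the symmetry of each $c_i$.

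The substantive step is the triangle inequality $d_1(x,z) \leq d_1(x,y) + d_1(y,z)$, and the point is that it suffices to establish the coordinatewise estimate for each $i$ and sum. Fixing $i$ and writing $c_i(a,b)$ for the gap attached to a pair $a,b$, I want $c_i(x_i,z_i) \leq c_i(x_i,y_i) + c_i(y_i,z_i)$. If two of $x_i,y_i,z_i$ coincide this is trivial, so one reduces to the six strict orderings listed in the proof of Proposition~\ref{R+1a}. Exactly as there, the only non-slack case is $x_i < y_i < z_i$ together with its mirror $z_i < y_i < x_i$: writing $y_i = x_i + a_i$ and $z_i = y_i + b_i$ with $a_i, b_i > 0$ gives $c_i(x_i,y_i) = a_i$, $c_i(y_i,z_i) = b_i$ and $c_i(x_i,z_i) = a_i + b_i$, so equality holds; in each of the other orderings one of the three gaps equals the difference of the other two and the inequality is strict. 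Summing over $i=1,\dots,n$ yields $d_1(x,z) \leq d_1(x,y) + d_1(y,z)$, so $d_1$ is a metric on $V$.

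Finally I would note that $d_1$ is \emph{derived from} the norm ${\| \ \|}_1$ of Proposition~\ref{R+2} in precisely the sense that $d$ is derived from the Euclidean norm in Proposition~\ref{R+1a}: letting $c = (c_1,\dots,c_n) \in V$ be the coordinatewise gap vector attached to the pair $(x,y)$, one has $d_1(x,y) = \sum_{i=1}^{n} c_i = {\| c \|}_1$; equivalently, under the natural embedding of $V$ into ${\mathbb R}^{n}$ the element $x - y$ has entries $\pm c_i$, so that ${\| x - y \|}_1 = \sum_{i=1}^{n} |{\pm c_i}| = d_1(x,y)$. The only real obstacle is the triangle inequality; unlike the $\ell^2$ situation of Proposition~\ref{R+1a}, which needs Cauchy--Schwarz, here it collapses to the elementary coordinatewise case analysis above, while positivity, non-degeneracy, symmetry and the relation to ${\| \ \|}_1$ are all immediate.
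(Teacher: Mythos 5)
Your proof is correct and follows essentially the same route as the paper: the paper also reduces the triangle inequality to the coordinatewise ``worst case'' $x_i < y_i < z_i$, writes $y_i = x_i + a_i$, $z_i = y_i + b_i$, and sums over $i$, leaving the remaining axioms and the slack orderings as routine. Your write-up is somewhat more complete (you check all six orderings and make the identity $d_1(x,y) = \|c\|_1$ explicit), but there is no substantive difference in method.
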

\begin{proof}
We only prove the triangle inequality. To avoid stress of notation,
we consider the same that was considered in the proof of
Proposition~\ref{R+1a}. We then fix $i$ and only investigate the worst case
$x_i < y_i < z_i$. In this case, there exist positive real numbers
$a_i$, $b_i$ for all $i=1, 2 , \ldots , n$, such that $y_i = x_i + a_i$
and $z_i = y_i + b_i$, which implies $z_i = x_i + a_i + b_i$. Then, for all
$i$, $d_1 (x_i , z_i) \leq d_1 (x_i , y_i ) + d_1 (y_i , z_i)$; hence,
$d_1 (x, z)=\displaystyle\sum_{i=1}^{n} d_1 (x_i , z_i) =
\displaystyle\sum_{i=1}^{n} (a_i + b_i ) =
\displaystyle\sum_{i=1}^{n} a_i + \displaystyle\sum_{i=1}^{n} b_i =
\displaystyle\sum_{i=1}^{n} d_1 (x_i , y_i ) + \displaystyle\sum_{i=1}^{n}
d_1 (y_i , z_i )= d_1 (x, y) + d_1 (y, z)$. Therefore, $d_1$ is a metric on $V$.
\end{proof}

\begin{proposition}\label{R+3}
Let $V = [{\mathbb R}_{0}^{+}]^{n}$ be the Euclidean semi-vector space of
semi-dimension $n$. Define the function ${\| \ \|}_2:V \longrightarrow
{\mathbb R}_{0}^{+}$ as follows: if $x = (x_1 , x_2 , \ldots ,$ $x_n ) \in V$,
take ${\| x \|}_2=\displaystyle\max_{i} \{ x_i \}$. Then ${\| x \|}_2$
is a norm on $V$.
\end{proposition}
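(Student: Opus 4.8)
The plan is to verify directly the three defining properties of a norm on the semi-vector space $V = [\mathbb{R}_{0}^{+}]^{n}$: positive-definiteness, positive homogeneity, and the triangle inequality. Since every coordinate of $x=(x_1,\ldots,x_n)$ is nonnegative, $\|x\|_2=\max_i x_i\ge 0$ holds automatically, and $\|x\|_2=0$ forces $x_i\le \max_j x_j=0$ together with $x_i\ge 0$, hence $x_i=0$ for all $i$, i.e. $x=0_V$; the converse implication is immediate. This settles positive-definiteness.

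For homogeneity I would use that multiplication by a fixed scalar $\alpha\in\mathbb{R}_{0}^{+}$ is order-preserving on $\mathbb{R}_{0}^{+}$, so $\max_i(\alpha x_i)=\alpha\max_i x_i$; therefore $\|\alpha x\|_2=\alpha\|x\|_2=|\alpha|\,\|x\|_2$, the absolute value being superfluous here since $\alpha\ge 0$. For the triangle inequality, fix $x,y\in V$ and choose an index $j$ attaining $\max_i(x_i+y_i)$; then $\|x+y\|_2=x_j+y_j\le\max_i x_i+\max_i y_i=\|x\|_2+\|y\|_2$, using only that $x_j\le\max_i x_i$ and $y_j\le\max_i y_i$.

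The point worth stressing is that, unlike the Euclidean and $\|\cdot\|_1$ cases handled in Propositions~\ref{R+1a} and \ref{R+2a}, here there is no case analysis on $\min/\max$ of coordinates and no appeal to the Cauchy--Schwarz inequality: the lack of additive inverses in $\mathbb{R}_{0}^{+}$ is not an obstruction, because establishing the triangle inequality for $\|\cdot\|_2$ never requires forming differences of semi-vectors. Consequently I expect no genuinely hard step in the argument; the only thing to keep an eye on is that every quantity manipulated remains in $\mathbb{R}_{0}^{+}$, which it does throughout.
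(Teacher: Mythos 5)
Your proof is correct: all three norm axioms are verified cleanly, every quantity stays in $\mathbb{R}_{0}^{+}$, and your observation that the $\max$ norm needs no case analysis or Cauchy--Schwarz is accurate. The paper in fact omits the proof of this proposition entirely (as it does for the neighbouring ``direct'' verifications), so your argument simply supplies the routine verification the authors left implicit.
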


\begin{proposition}\label{R+3a}
Keeping the notation of Proposition~\ref{R+1a}, define the function
$d_2:V \times V \longrightarrow {\mathbb R}_{0}^{+}$ such that
$d_2 (x, y) = \max_{i} \{ c_i \}$. Then $d_2$ is a metric on $V$. Moreover, $d_2$
is obtained from the norm ${\| \ \|}_2$ exhibited in Proposition~\ref{R+3}.
\end{proposition}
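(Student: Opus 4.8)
The plan is to verify the three metric axioms for $d_2$ and then to identify it with the metric induced by ${\| \ \|}_2$, following closely the pattern already used for $d_1$ in Proposition~\ref{R+2a}. First I would dispose of the two easy axioms: since each $c_i\geq 0$ by construction, $d_2(x,y)=\max_i\{c_i\}\geq 0$, and $d_2(x,y)=0$ forces every $c_i=0$, hence $x_i=y_i$ for all $i$, i.e. $x=y$; conversely $x=y$ makes all $c_i=0$. Symmetry is immediate because the defining relation ${\varphi}_i={\psi}_i+c_i$, with ${\varphi}_i=\max\{x_i,y_i\}$ and ${\psi}_i=\min\{x_i,y_i\}$, is itself symmetric in $x_i$ and $y_i$, so the numbers $c_i$ — and hence their maximum — are unchanged when $x$ and $y$ are interchanged.

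The heart of the argument is the triangle inequality, which I would establish coordinatewise exactly as in the proof of Proposition~\ref{R+1a}. Fix $i$ and write $c_i^{(x,y)}$, $c_i^{(y,z)}$, $c_i^{(x,z)}$ for the numbers attached to the pairs $(x,y)$, $(y,z)$, $(x,z)$ in coordinate $i$. If at least two of $x_i,y_i,z_i$ coincide the inequality $c_i^{(x,z)}\leq c_i^{(x,y)}+c_i^{(y,z)}$ is trivial; otherwise one runs through the six strict orderings, the decisive (worst) case being $x_i<y_i<z_i$ (the reversed ordering $z_i<y_i<x_i$ being analogous), where there are positive reals $a_i,b_i$ with $y_i=x_i+a_i$ and $z_i=y_i+b_i$, so $z_i=x_i+a_i+b_i$ and therefore $c_i^{(x,y)}=a_i$, $c_i^{(y,z)}=b_i$, $c_i^{(x,z)}=a_i+b_i$, giving equality in that case and $\leq$ in all the rest. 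Taking maxima over $i$ then yields
\[
d_2(x,z)=\max_i c_i^{(x,z)}\leq \max_i\left(c_i^{(x,y)}+c_i^{(y,z)}\right)\leq \max_i c_i^{(x,y)}+\max_i c_i^{(y,z)}=d_2(x,y)+d_2(y,z),
\]
where the last inequality is the elementary fact that the maximum of a sum is at most the sum of the maxima.

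Finally, to see that $d_2$ comes from the norm ${\| \ \|}_2$, I would introduce the vector $c=(c_1,c_2,\ldots,c_n)\in V=[{\mathbb R}_{0}^{+}]^{n}$ built from the numbers $c_i$ associated with $(x,y)$; in the absence of additive inverses this $c$ plays the role of $|x-y|$ in the classical setting, and by definition $d_2(x,y)=\max_i\{c_i\}={\| c \|}_2$, which is precisely the assertion that $d_2$ is obtained from the norm of Proposition~\ref{R+3}, just as Proposition~\ref{R+2a} exhibits $d_1$ as coming from ${\| \ \|}_1$. I do not anticipate any real obstacle; the only point that needs a little care is the bookkeeping of the six orderings in the triangle inequality, and, as in Proposition~\ref{R+1a}, that collapses to the single worst case $x_i<y_i<z_i$.
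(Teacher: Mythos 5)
The paper states Proposition~\ref{R+3a} without proof, so there is nothing to compare against directly; your argument is correct and is exactly the coordinatewise strategy the paper itself uses for the analogous Propositions~\ref{R+1a} and~\ref{R+2a} (reduce to the six orderings of $x_i, y_i, z_i$, handle the worst case $x_i<y_i<z_i$, then pass to the maximum using $\max_i(u_i+v_i)\leq \max_i u_i+\max_i v_i$). Your explicit identification $d_2(x,y)=\|c\|_2$ with $c=(c_1,\ldots,c_n)$ is also a welcome addition, since the paper asserts the "derived from the norm" claim for $d_1$ and $d_2$ without spelling it out.
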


\begin{proposition}\label{R+4}
The norms $\| \ \|$, ${\| \ \|}_1$ and ${\| \ \|}_2$ shown in
Propositions~\ref{R+1},~\ref{R+2} and \ref{R+3} are equivalent.
\end{proposition}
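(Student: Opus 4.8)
The plan is to avoid any appeal to the classical theorem asserting that all norms on a finite-dimensional vector space are equivalent: that proof relies on compactness of the unit sphere together with the full vector-space structure (in particular on subtraction), none of which is available here. Instead I will establish a single chain of pointwise inequalities among the three norms; since equivalence of norms is a transitive relation, this immediately yields all the pairwise equivalences. The key observation making this work --- and in fact making it easier than in the vector-space case --- is that every coordinate $x_i$ of a semi-vector $x = (x_1,\ldots,x_n) \in [\mathbb{R}_0^+]^n$ is nonnegative, so that no absolute values occur and all the cross terms appearing below are automatically nonnegative.

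Concretely, fix $x = (x_1,\ldots,x_n) \in V = [\mathbb{R}_0^+]^n$. First I would check $\|x\|_2 \le \|x\|$: since $\max_i \{x_i^2\} \le \sum_{i=1}^n x_i^2$, we get $\|x\|_2 = \max_i\{x_i\} = \sqrt{\max_i\{x_i^2\}} \le \sqrt{\sum_{i=1}^n x_i^2} = \|x\|$. Next, $\|x\| \le \|x\|_1$: expanding, $\left(\sum_{i=1}^n x_i\right)^2 = \sum_{i=1}^n x_i^2 + 2\sum_{i<j} x_i x_j \ge \sum_{i=1}^n x_i^2$, because each $x_i x_j \ge 0$; taking square roots gives the claim. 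Finally, $\|x\|_1 \le n\|x\|_2$: each $x_i \le \max_j\{x_j\}$, hence $\|x\|_1 = \sum_{i=1}^n x_i \le n\max_j\{x_j\} = n\|x\|_2$. Stringing these together gives
\[
\|x\|_2 \;\le\; \|x\| \;\le\; \|x\|_1 \;\le\; n\,\|x\|_2 \qquad \text{for every } x \in V.
\]

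From this chain the three norms are pairwise equivalent. Indeed $\|x\|_2 \le \|x\| \le \|x\|_1 \le n\|x\|_2$ shows at once that $\|\cdot\|$ is equivalent to $\|\cdot\|_2$ and that $\|\cdot\|_1$ is equivalent to $\|\cdot\|_2$; and then $\tfrac1n \|x\|_1 \le \|x\|_2 \le \|x\| \le \|x\|_1$ shows $\|\cdot\|$ is equivalent to $\|\cdot\|_1$ as well. I do not anticipate a genuine obstacle here; the only point requiring care is that the argument be self-contained rather than quoting the finite-dimensional equivalence theorem, and one should verify that the inequalities invoked --- notably $\sum x_i^2 \le \left(\sum x_i\right)^2$ and, should one prefer the sharper constant $\sqrt{n}$ in place of $n$, the Cauchy--Schwarz inequality already used in Propositions~\ref{R+1} and~\ref{R+1a} --- hold over $\mathbb{R}_0^+$ without recourse to subtraction, which they plainly do.
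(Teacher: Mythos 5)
Your proof is correct and follows exactly the same route as the paper, which simply asserts the chain $\| \ \|_2 \leq \| \ \| \leq \| \ \|_1 \leq n \| \ \|_2$ as immediate; you supply the elementary justifications (in particular the observation that the cross terms in $\left(\sum_i x_i\right)^2$ are nonnegative over ${\mathbb R}_{0}^{+}$), which the paper omits.
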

\begin{proof}
It is immediate to see that ${\| \ \|}_2 \leq \| \ \| \leq
{\| \ \|}_1 \leq n {\| \ \|}_2$.
\end{proof}

In a natural way we can define the norm of a bounded semi-linear
transformation.

\begin{definition}\label{semibounded}
Let $V$ and $W$ be two normed semi-vector spaces and let $T:V
\longrightarrow W$ be a semi-linear transformation.
We say that $T$ is bounded if there exists a real number $c > 0$
such that $\| T(v)\|\leq c \| v \|$.
\end{definition}

If $T:V \longrightarrow W$ is bounded and $v \neq 0$
we can consider the quotient $\frac{\| T(v)\|}{\| v \|}$. Since
such a quotient is upper bounded by $c$, the supremum $\displaystyle
\sup_{v \in V, v\neq 0}\frac{\| T(v)\|}{\| v \|}$
exists and it is at most $c$. We then define
$$\| T \|= \displaystyle\sup_{v \in V, v\neq 0}\frac{\| T(v)\|}{\| v \|}.$$

\begin{proposition}\label{R+5}
Let $T: V \longrightarrow W$ be a bounded semi-linear transformation.
Then the following hold:
\begin{itemize}
\item [ $\operatorname{(1)}$] $T$ sends bounded sets in bounded sets;
\item [ $\operatorname{(2)}$] $\| T \|$ is a norm, called norm of $T$;
\item [ $\operatorname{(3)}$] $\| T \|$ can be written in the form
$\| T \|= \displaystyle\sup_{v \in V, \| v \| = 1 } \| T(v) \|$.
\end{itemize}
\end{proposition}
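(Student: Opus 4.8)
The plan is to dispatch the three items in order; none of them uses subtraction of vectors or negative scalars, so each argument runs parallel to the classical Banach-space proof, the only extra care being to stay inside $\mathbb{R}_{0}^{+}$ and inside a genuine semi-vector space of operators. For item~$\operatorname{(1)}$ I would take an arbitrary bounded set $A\subseteq V$, fix $M>0$ with $\|v\|\le M$ for all $v\in A$, and use boundedness of $T$ to get $\|T(v)\|\le c\|v\|\le cM$, so that $T(A)$ lies in the ball of radius $cM$ in $W$; hence $T(A)$ is bounded.

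For item~$\operatorname{(3)}$ the key observation is that for $v\neq 0_V$ the scalar $\|v\|^{-1}$ belongs to $\mathbb{R}_{0}^{+}$ (positive reals are invertible in the semi-field), and by homogeneity of the norm on $V$ one has $\|\,\|v\|^{-1}v\,\| = \|v\|^{-1}\|v\| = 1$. Applying semi-linearity and homogeneity of the norm on $W$, $\dfrac{\|T(v)\|}{\|v\|} = \|T(\|v\|^{-1}v)\|$ with $\|v\|^{-1}v$ a unit vector; conversely every unit vector $u$ arises this way (take $v=u$, since then $\|T(u)\|/\|u\| = \|T(u)\|$). Thus the sets $\{\|T(v)\|/\|v\| : v\neq 0_V\}$ and $\{\|T(u)\| : \|u\|=1\}$ coincide, so their suprema are equal, which is the asserted formula $\| T \|= \sup_{v \in V,\ \| v \| = 1 } \| T(v) \|$.

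For item~$\operatorname{(2)}$ I would first record that the bounded semi-linear maps form a semi-subspace of $\operatorname{Hom}(V,W)$: if $T,S$ are bounded with constants $c_T,c_S$, then $\|(T+S)(v)\|\le \|T(v)\|+\|S(v)\|\le (c_T+c_S)\|v\|$, and $\|(\alpha T)(v)\| = \alpha\|T(v)\|\le \alpha c_T\|v\|$ for $\alpha>0$ (for $\alpha=0$ the zero map is bounded by any positive constant). Then I verify the norm axioms on this semi-subspace. Nonnegativity is immediate from the definition of $\|T\|$ as a supremum of nonnegative quantities; if $\|T\|=0$ then $\|T(v)\|=0$, hence $T(v)=0_W$, for every $v\neq 0_V$, and $T(0_V)=0_W$ anyway, so $T$ is the zero operator. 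Homogeneity follows from $\|\alpha T\| = \sup_{v\neq 0}\frac{\|\alpha T(v)\|}{\|v\|} = \alpha\sup_{v\neq 0}\frac{\|T(v)\|}{\|v\|} = |\alpha|\,\|T\|$, using $|\alpha|=\alpha$ for $\alpha\in\mathbb{R}_{0}^{+}$. The triangle inequality follows by taking the supremum over $v\neq 0_V$ in $\frac{\|(T+S)(v)\|}{\|v\|}\le \frac{\|T(v)\|}{\|v\|}+\frac{\|S(v)\|}{\|v\|}\le \|T\|+\|S\|$.

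The only genuine point requiring attention — the main ``obstacle'' — is the bookkeeping that $\|\cdot\|$ is being defined on a set that really is a semi-vector space, since Definition~\ref{semiBanach} presupposes a semi-vector structure on the domain of the norm; once the bounded maps are identified as a semi-subspace of $\operatorname{Hom}(V,W)$, every remaining step is formally identical to the vector-space argument, because at no stage do we add an additive inverse or scale by a negative number, and the inverse $\|v\|^{-1}$ is available precisely because $\|v\|>0$ is an invertible element of the semi-field $\mathbb{R}_{0}^{+}$.
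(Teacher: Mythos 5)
Your proposal is correct and follows essentially the same route as the paper: the paper dismisses items $\operatorname{(1)}$ and $\operatorname{(2)}$ as immediate and proves item $\operatorname{(3)}$ by exactly your normalization argument, setting $u=(1/\|v\|)v$ and using semi-linearity together with the invertibility of positive scalars in ${\mathbb R}_{0}^{+}$. Your additional care in checking that the bounded semi-linear maps form a semi-subspace of $\operatorname{Hom}(V,W)$ and in verifying the norm axioms for $\|T\|$ only fills in details the paper leaves implicit.
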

\begin{proof}
Items~$\operatorname{(1)}$~and~ $\operatorname{(2)}$ are immediate.
The proof of Item~$\operatorname{(3)}$ is analogous to the
standard proof but we present it here to guarantee that our mathematical
tools are sufficient to perform it. Let $v\neq 0$ be a semi-vector
with norm $\| v \|= a \neq 0$ and set $u=(1/a)v$. Thus,
$\| u \| =1$ and since $T$ is semi-linear one has
$$\| T \|= \displaystyle\sup_{v \in V, v\neq 0} \frac{1}{a}\|
T(v)\|=\displaystyle\sup_{v \in V, v\neq 0} \| T( (1/a) v) \|=
\displaystyle\sup_{u \in V, \| u \| =1} \| T(u)\|=$$ $=
\displaystyle\sup_{v \in V, \| v \| =1} \| T(v)\|$.
\end{proof}


\subsubsection{The Semi-Spaces ${l}_{+}^{\infty}$, ${l}_{+}^{p}$
and ${\operatorname{C}}_{+}[a, b]$}\label{subsubsec1}

In this subsection we investigate topological aspects of
some semi-vector spaces over ${\mathbb R}_{0}^{+}$ such
as completeness and separability. We investigate the sequence spaces
${l}_{+}^{\infty}$, ${l}_{+}^{p}$, ${\operatorname{C}}_{+}[a, b]$,
which will be defined in the sequence.

We first study the space ${l}_{+}^{\infty}$, the set
of all bounded sequences of nonnegative real numbers.
Before studying such a space we must define a metric on it,
since the metric in $l^{\infty}$
which is defined as $ d(x, y)=\displaystyle\sup_{i \in
{\mathbb N}} | {x}_i - {y}_i |$,
where $x = ({x}_i )$ and $y = ({y}_i )$ are sequences in
$l^{\infty}$,
has no meaning to us, because there is no sense in
considering $- {y}_i$ if ${y}_i > 0$. Based on this fact,
we circumvent
this problem by utilizing the total order of ${\mathbb R}$
according to Proposition~\ref{R+1a}. Let $x = ({\mu}_i )$ and
$y = ({\nu}_i )$ be sequences in $l_{+}^{\infty}$. We then fix $i$,
and define $c_i$ as was done in Proposition~\ref{R+1a}:
if ${\mu}_i = {\nu}_i $ then we put $c_i = 0$; if ${\mu}_i \neq {\nu}_i $,
let ${\gamma}_i=\max \{{\mu}_i , {\nu}_i \}$
and ${\psi}_i= \min \{{\mu}_i , {\nu}_i \}$; then there exists a positive
real number $c_i$ such that ${\gamma}_i = {\psi}_i + c_i$
and, in place of $| {\mu}_i - {\nu}_i |$, we put $c_i$. Thus,
our metric becomes
\begin{eqnarray}\label{lmetric}
d(x, y) = \displaystyle\sup_{i \in {\mathbb N}} \{c_i \}.
\end{eqnarray}

It is clear that $d(x, y)$ shown in Eq.~(\ref{lmetric}) defines a metric. However,
we must show that the tools that we have are sufficient to proof this fact,
once we are working on ${\mathbb R}_{0}^{+}$.

\begin{proposition}\label{metricsup}
The function $d$ shown in Eq.~(\ref{lmetric}) is a metric on ${l}_{+}^{\infty}$.
\end{proposition}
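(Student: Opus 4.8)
The plan is to verify the three metric axioms directly, reusing the per-coordinate analysis already carried out for Proposition~\ref{R+1a}. First I would check that $d$ is well defined, i.e.\ that the supremum in Eq.~(\ref{lmetric}) is finite: since $x=({\mu}_i)$ and $y=({\nu}_i)$ are bounded sequences of nonnegative reals, there is $M>0$ with ${\mu}_i,{\nu}_i\le M$ for all $i$, and because all terms are nonnegative one has $c_i=\max\{{\mu}_i,{\nu}_i\}-\min\{{\mu}_i,{\nu}_i\}\le\max\{{\mu}_i,{\nu}_i\}\le M$ (phrased intrinsically: $c_i$ is the nonnegative number with $\max\{{\mu}_i,{\nu}_i\}=\min\{{\mu}_i,{\nu}_i\}+c_i$, hence $c_i\le\max\{{\mu}_i,{\nu}_i\}$); thus $0\le d(x,y)\le M<\infty$.

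Next, positivity and symmetry are immediate. Since each $c_i\ge 0$, the equality $d(x,y)=\sup_{i}\{c_i\}=0$ holds if and only if $c_i=0$ for every $i$, which by the definition of $c_i$ means ${\mu}_i={\nu}_i$ for every $i$, i.e.\ $x=y$. Symmetry $d(x,y)=d(y,x)$ follows because each $c_i$ depends only on the unordered pair $\{{\mu}_i,{\nu}_i\}$ (it is built from $\max$ and $\min$), so interchanging $x$ and $y$ leaves every $c_i$ unchanged.

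The substantive step is the triangle inequality. Given a third sequence $z=({\zeta}_i)\in l_{+}^{\infty}$, write $c_i^{xy}$, $c_i^{yz}$, $c_i^{xz}$ for the nonnegative reals associated as in Eq.~(\ref{lmetric}) with the pairs $({\mu}_i,{\nu}_i)$, $({\nu}_i,{\zeta}_i)$, $({\mu}_i,{\zeta}_i)$, respectively. The heart of the argument is the one-coordinate inequality $c_i^{xz}\le c_i^{xy}+c_i^{yz}$, which I would establish exactly as in the proof of Proposition~\ref{R+1a}: if two of ${\mu}_i,{\nu}_i,{\zeta}_i$ coincide it is trivial, and otherwise one reduces to the worst case ${\mu}_i<{\nu}_i<{\zeta}_i$ (the reversed order ${\zeta}_i<{\nu}_i<{\mu}_i$ being analogous), for which there are positive reals $a_i,b_i$ with ${\nu}_i={\mu}_i+a_i$ and ${\zeta}_i={\nu}_i+b_i$, so that $c_i^{xy}=a_i$, $c_i^{yz}=b_i$, $c_i^{xz}=a_i+b_i$, giving in fact equality. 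Once this is in hand, for every fixed $i$ we have $c_i^{xz}\le c_i^{xy}+c_i^{yz}\le\sup_{j}c_j^{xy}+\sup_{j}c_j^{yz}=d(x,y)+d(y,z)$, and taking the supremum over $i$ yields $d(x,z)\le d(x,y)+d(y,z)$. I expect the only delicate point to be the bookkeeping in this case analysis, since we may not subtract in ${\mathbb R}_{0}^{+}$ and must argue throughout via the defining relation $\max=\min+c$; everything else is routine.
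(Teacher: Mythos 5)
Your proposal is correct and follows essentially the same route as the paper: verify the first two axioms directly, then establish the coordinatewise triangle inequality $c_i^{xz}\le c_i^{xy}+c_i^{yz}$ by the same case analysis on the ordering of ${\mu}_i,{\nu}_i,{\zeta}_i$ used for Proposition~\ref{R+1a}, and pass to the supremum. The only addition is your explicit check that the supremum is finite (so $d$ is well defined), which the paper omits but which is a worthwhile remark since $l_{+}^{\infty}$ consists of bounded sequences precisely to guarantee this.
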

\begin{proof}
It is clear that $d(x,y)\geq 0$ and $d(x,y)= 0 \Longleftrightarrow
x=y$. Let $x = ({\mu}_i )$ and $y = ({\nu}_i )$ be two sequences
in $l_{+}^{\infty}$. Then, for every fixed $i \in {\mathbb N}$, if $c_i=
d({\mu}_i , {\nu}_i )=0$ then ${\mu}_i = {\nu}_i$, i.e., $d({\mu}_i
, {\nu}_i )=d({\nu}_i , {\mu}_i )$. If $c_i > 0$
then $c_i= d({\mu}_i , {\nu}_i )$ is computed by ${\gamma}_i =
{\psi}_i + c_i$, where ${\gamma}_i=\max \{{\mu}_i , {\nu}_i \}$
and ${\psi}_i= \min \{{\mu}_i , {\nu}_i \}$. Hence,
$d({\nu}_i , {\mu}_i ) = c_i^{*}$ is computed by
${\gamma}_i^{*} = {\psi}_i^{*} + c_i^{*}$, where
${\gamma}_i^{*}=\max \{{\nu}_i, {\mu}_i \}$ and ${\psi}_i^{*}=
\min \{{\nu}_i, {\mu}_i \}$, which implies $d({\mu}_i , {\nu}_i )
=d({\nu}_i , {\mu}_i )$. Taking the supremum over all $i$'s we have
$d(x, y) = \displaystyle\sup_{i \in {\mathbb N}} \{c_i \}=
\displaystyle\sup_{i \in {\mathbb N}} \{c_i^{*} \}=d(y, x)$.

To show the triangle inequality, let $x = ({\mu}_i )$,
$y = ({\nu}_i )$ and $z=({\eta}_i)$ be sequences in $l_{+}^{\infty}$.
For every fixed $i$, we will prove that
$d({\mu}_i , {\eta}_i )\leq d({\mu}_i , {\nu}_i ) + d({\nu}_i ,
{\eta}_i )$. If ${\nu}_i = {\mu}_i = {\eta}_i$, the result is
trivial. If two of them are equal, the result is also trivial. Assume
that ${\mu}_i$, ${\nu}_i$ and ${\eta}_i$ are pairwise distinct.
As in the proof of Proposition~\ref{R+1a}, we must investigate the six cases:\\
$\operatorname{(1)}$ ${\mu}_i < {\nu}_i < {\eta}_i$;
$\operatorname{(2)}$ ${\mu}_i <  {\eta}_i < {\nu}_i$;
$\operatorname{(3)}$ ${\nu}_i < {\mu}_i < {\eta}_i$;
$\operatorname{(4)}$ ${\nu}_i < {\eta}_i < {\mu}_i$;
$\operatorname{(5)}$ ${\eta}_i < {\mu}_i < {\nu}_i$;
$\operatorname{(6)}$ ${\eta}_i < {\nu}_i < {\mu}_i$.
We only show $\operatorname{(1)}$ and $\operatorname{(2)}$.

To show $\operatorname{(1)}$, note that there exist positive real
numbers $c_i$ and $c_i^{'}$ such that ${\nu}_i =
{\mu}_i + c_i$ and ${\eta}_i = {\nu}_i + c_i^{'}$, which implies $\eta_i =
\mu_i + c_i + c_i^{'}$. Hence, $d({\mu}_i , {\eta}_i )=c_i + c_i^{'}=
d({\mu}_i , {\nu}_i ) + d({\nu}_i , {\eta}_i )$.

Let us show $\operatorname{(2)}$. There exist positive real
numbers $b_i$ and $b_i^{'}$ such that  ${\eta}_i =
{\mu}_i + b_i$ and ${\nu}_i={\eta}_i + b_i^{'}$, so ${\nu}_i = {\mu}_i
+ b_i + b_i^{'}$. Therefore, $d({\mu}_i , {\eta}_i )=b_i < d({\mu}_i ,
{\nu}_i ) + d({\nu}_i , {\eta}_i )=b_i + 2b_i^{'}$.

Taking the supremum over all $i$'s we have
$\displaystyle\sup_{i \in {\mathbb N}} \{d({\mu}_i ,
{\eta}_i ) \} \leq \displaystyle\sup_{i \in {\mathbb N}} \{d({\mu}_i ,
{\nu}_i )\} + \displaystyle\sup_{i \in {\mathbb N}} \{d({\nu}_i , {\eta}_i ) \}$, i.e.,
$d(x, z) \leq d(x, y) + d(y, z)$. Therefore, $d$ is a metric on ${l}_{+}^{\infty}$.
\end{proof}

\begin{definition}\label{defl}
The metric space ${l}_{+}^{\infty}$ is the set of all bounded
sequences of nonnegative real numbers equipped with the metric
$d(x, y) = \displaystyle\sup_{i \in {\mathbb N}} \{c_i \}$ given previously.
\end{definition}

We prove that ${l}_{+}^{\infty}$ equipped with the previous metric is complete.

\begin{theorem}\label{lcomplete}
The space ${l}_{+}^{\infty}$ with the metric $d(x, y) = \displaystyle\sup_{i
\in {\mathbb N}} \{c_i \}$ shown above is complete.
\end{theorem}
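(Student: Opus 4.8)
The plan is to run the classical proof that $l^{\infty}$ is complete, taking care that every manipulation stays inside $\mathbb{R}_{0}^{+}$ and never subtracts a larger quantity from a smaller one. (One could instead observe that $l_{+}^{\infty}$ is a closed subset of the complete metric space $l^{\infty}$; but in keeping with Remark~\ref{mainremark}(3) I give a direct argument.) Concretely, I would proceed in three stages: (i) from a Cauchy sequence in $l_{+}^{\infty}$ extract, coordinate by coordinate, a limit in $\mathbb{R}_{0}^{+}$; (ii) assemble these coordinate limits into a candidate sequence $x$ and check that $x$ is bounded, hence lies in $l_{+}^{\infty}$; (iii) show the original sequence converges to $x$ in the metric $d$ of Eq.~(\ref{lmetric}).

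For (i): let $(x_{n})_{n\in\mathbb{N}}$ be Cauchy in $l_{+}^{\infty}$, with $x_{n}=(\xi_{i}^{(n)})_{i\in\mathbb{N}}$. Fix a coordinate $i$. For $m,n\in\mathbb{N}$ let $c_{i}^{(m,n)}$ be the nonnegative real number with $\max\{\xi_{i}^{(m)},\xi_{i}^{(n)}\}=\min\{\xi_{i}^{(m)},\xi_{i}^{(n)}\}+c_{i}^{(m,n)}$, as in Proposition~\ref{R+1a}; by definition of $d$ one has $c_{i}^{(m,n)}\le d(x_{m},x_{n})$. Hence, given $\varepsilon>0$, a bound $d(x_{m},x_{n})<\varepsilon$ for $m,n\ge N$ forces $c_{i}^{(m,n)}<\varepsilon$ for such $m,n$; that is, $(\xi_{i}^{(n)})_{n}$ is Cauchy in $\mathbb{R}_{0}^{+}$. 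Since $\mathbb{R}_{0}^{+}$ is complete (it is a closed subset of $\mathbb{R}$), this sequence converges to some $\xi_{i}\in\mathbb{R}_{0}^{+}$; set $x:=(\xi_{i})_{i\in\mathbb{N}}$.

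For (ii): a Cauchy sequence is bounded, so there is $M>0$ with $d(x_{n},x_{1})\le M$ for all $n$; thus for every $i$ and $n$ the gap between $\xi_{i}^{(n)}$ and $\xi_{i}^{(1)}$ is $\le M$, whence $\xi_{i}^{(n)}\le\xi_{i}^{(1)}+M$, and letting $n\to\infty$ gives $\xi_{i}\le\xi_{i}^{(1)}+M$. Since $x_{1}\in l_{+}^{\infty}$ is bounded, say by $K$, we get $\xi_{i}\le K+M$ for all $i$, so $x\in l_{+}^{\infty}$. For (iii): fix $\varepsilon>0$ and $N$ with $d(x_{m},x_{n})\le\varepsilon$ for all $m,n\ge N$. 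Then for each fixed $i$ and each $n\ge N$ we have $c_{i}^{(m,n)}\le\varepsilon$ for all $m\ge N$; letting $m\to\infty$, the gap between $\xi_{i}$ and $\xi_{i}^{(n)}$ is $\le\varepsilon$. As this holds for every $i$, taking the supremum over $i$ yields $d(x,x_{n})\le\varepsilon$ for all $n\ge N$, so $x_{n}\to x$ and $l_{+}^{\infty}$ is complete.

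The one genuinely delicate point — the only place the nonnegativity restriction bites — is the passage to the limit in step (iii): I must argue that if the gap between $\xi_{i}^{(n)}$ and $\xi_{i}^{(m)}$ is at most $\varepsilon$ and $\xi_{i}^{(m)}\to\xi_{i}$, then the gap between $\xi_{i}^{(n)}$ and $\xi_{i}$ is at most $\varepsilon$. Since I cannot write this gap as $|\xi_{i}^{(n)}-\xi_{i}|$ within $\mathbb{R}_{0}^{+}$, I would instead note that $(a,b)\mapsto\max\{a,b\}-\min\{a,b\}$ is continuous on $\mathbb{R}_{0}^{+}\times\mathbb{R}_{0}^{+}$ and always takes values in $\mathbb{R}_{0}^{+}$, so the inequality persists in the limit; equivalently, one splits into the cases $\xi_{i}^{(n)}<\xi_{i}$, $\xi_{i}^{(n)}=\xi_{i}$, $\xi_{i}^{(n)}>\xi_{i}$, writes the larger member as the smaller plus a nonnegative constant, and concludes by an elementary squeeze. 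Everything else is the textbook argument transcribed into $\mathbb{R}_{0}^{+}$.
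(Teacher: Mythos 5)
Your proof is correct and follows essentially the same route as the paper's: coordinatewise Cauchyness via $c_{i}^{(m,n)}\le d(x_{m},x_{n})$, completeness of ${\mathbb R}_{0}^{+}$ to produce the candidate limit $x$, a boundedness check to place $x$ in ${l}_{+}^{\infty}$, and passage to the limit in $m$ to obtain $d(x_{n},x)\le\varepsilon$. If anything, you are more careful than the paper at the one delicate step --- justifying that the ``gap'' inequality survives the limit $m\to\infty$, which the paper asserts without comment --- and your boundedness argument (via $x_{1}$ and the boundedness of Cauchy sequences, rather than via $\eta_{j}\le\eta_{j}^{(n)}+c_{j}^{(n,\infty)}$) is an equally valid minor variant.
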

\begin{proof}
The proof follows the same line as the standard proof of completeness
of ${l}^{\infty}$; however it is necessary to adapt it
to the metric (written above) in terms of nonnegative real numbers. Let $(x_n)$
be a Cauchy sequence in ${l}_{+}^{\infty}$, where $x_i =
({\eta}_{1}^{(i)}, {\eta}_{2}^{(i)}, \ldots )$. We must show that
$(x_n )$ converges to an element of ${l}_{+}^{\infty}$. As $(x_n)$
is Cauchy, given $\epsilon > 0$, there exists a positive integer
$K$ such that, for all $n, m > K$, $$d(x_n, x_m)=\displaystyle\sup_{j
\in {\mathbb N}} \{c_j^{(n, m)} \} < \epsilon,$$
where $c_j^{(n, m)}$ is a nonnegative real number such that, if
${\eta}_{j}^{(n)}={\eta}_{j}^{(m)}$ then $c_j^{(n, m)}=0$, and
if ${\eta}_{j}^{(n)} \neq {\eta}_{j}^{(m)}$ then $c_j^{(n, m)}$
is given by $\max \{{\eta}_{j}^{(n)}, {\eta}_{j}^{(m)}\} = \min
\{{\eta}_{j}^{(n)}, {\eta}_{j}^{(m)}\} +c_j^{(n, m)}$. This
implies that for each fixed $j$ one has
\begin{eqnarray}\label{distCauchy1}
c_j^{(n, m)} < \epsilon,
\end{eqnarray}
where $n, m > K$. Thus, for each fixed $j$, it follows that
$({\eta}_{j}^{(1)}, {\eta}_{j}^{(2)}, \ldots )$ is a
Cauchy sequence in ${\mathbb R}_{0}^{+}$. Since ${\mathbb R}_{0}^{+}$ is a
complete metric space, the sequence $({\eta}_{j}^{(1)}, {\eta}_{j}^{(2)},
\ldots )$ converges to an element ${\eta}_{j}$ in ${\mathbb R}_{0}^{+}$.
Hence, for each $j$, we form the sequence $x$ whose coordinates are
the limits ${\eta}_{j}$, i.e., $x =({\eta}_{1}, {\eta}_{2}, {\eta}_{3},
\ldots )$. We must show that $x \in {l}_{+}^{\infty}$ and $x_n
\longrightarrow x$.

To show that $x$ is a bounded sequence, let us consider the number
$c_j^{(n, \infty)}$ defined as follows: if ${\eta}_{j} =
{\eta}_{j}^{(n)}$ then $c_j^{(n, \infty)}=0$, and if ${\eta}_{j} \neq {\eta}_{j}^{(n)}$,
define $c_j^{(n, \infty)}$ be the positive real number satisfying
$\max \{{\eta}_{j} , {\eta}_{j}^{(n)} \}= \min \{{\eta}_{j} , {\eta}_{j}^{(n)} \}
+ c_j^{(n, \infty)}$. From the inequality $(\ref{distCauchy1})$ one has

\begin{eqnarray}\label{distCauchy2}
c_j^{(n, \infty)}\leq\epsilon .
\end{eqnarray}
Because ${\eta}_{j} \leq {\eta}_{j}^{(n)} + c_j^{(n, \infty)}$ and since
${\eta}_{j}^{(n)} \in l_{+}^{\infty}$, it follows that ${\eta}_{j}$ is
a bounded sequence for every $j$. Hence,  $x = ({\eta}_{1},
{\eta}_{2}, {\eta}_{3}, \ldots ) \in {l}_{+}^{\infty}$.
From $(\ref{distCauchy2})$ we have
$$\displaystyle\sup_{j \in {\mathbb N}} \{c_j^{(n, \infty)} \} \leq \epsilon,$$
which implies that $x_n \longrightarrow x$. Therefore, $l_{+}^{\infty}$ is complete.
\end{proof}

Although $l_{+}^{\infty}$ is a complete metric space, it is not separable.

\begin{theorem}\label{lnotsep}
The space ${l}_{+}^{\infty}$ with the metric $d(x, y) = \displaystyle\sup_{i
\in {\mathbb N}} \{c_i \}$ is not separable.
\end{theorem}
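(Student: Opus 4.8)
The plan is to mimic the classical argument showing that $l^{\infty}$ is not separable, adapted to the surrogate metric $d(x,y)=\sup_{i\in{\mathbb N}}\{c_i\}$ of Eq.~(\ref{lmetric}). First I would single out a large family of points that are pairwise far apart. Consider the subset $Y\subset l_{+}^{\infty}$ consisting of all sequences $y=({\eta}_i)$ with ${\eta}_i\in\{0,1\}$ for every $i\in{\mathbb N}$. Each such sequence is bounded and has nonnegative real entries, so indeed $Y\subseteq l_{+}^{\infty}$. The map sending $y\in Y$ to the subset $\{i\in{\mathbb N}:{\eta}_i=1\}$ is a bijection between $Y$ and the power set of ${\mathbb N}$; hence $Y$ is uncountable.

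Next I would compute the distance between two distinct elements of $Y$. If $x=({\mu}_i)$ and $y=({\nu}_i)$ lie in $Y$ with $x\neq y$, then there is at least one index $i$ with ${\mu}_i\neq{\nu}_i$; at such an index $\{{\mu}_i,{\nu}_i\}=\{0,1\}$, so $\max\{{\mu}_i,{\nu}_i\}=1=\min\{{\mu}_i,{\nu}_i\}+1$, giving $c_i=1$. At every index where ${\mu}_i={\nu}_i$ we have $c_i=0$. Therefore $d(x,y)=\sup_{i}\{c_i\}=1$. Thus $Y$ is an uncountable set whose points are pairwise at distance exactly $1$.

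Finally I would run the standard packing argument. For each $y\in Y$ take the open ball $B_y=\{z\in l_{+}^{\infty}:d(z,y)<1/3\}$. If $y\neq y'$ then $B_y\cap B_{y'}=\emptyset$, for otherwise the triangle inequality (Proposition~\ref{metricsup}) would give $d(y,y')\leq d(y,z)+d(z,y')<2/3<1$, contradicting the previous paragraph. Suppose, for contradiction, that $l_{+}^{\infty}$ were separable, with a countable dense subset $D$. Density forces each $B_y$ to contain at least one point of $D$; since the balls $\{B_y\}_{y\in Y}$ are pairwise disjoint, this yields an injection from the uncountable set $Y$ into the countable set $D$, a contradiction. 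Hence $l_{+}^{\infty}$ is not separable.

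As for difficulty, there is essentially no real obstacle here beyond bookkeeping: the only point requiring a moment's care is checking that the surrogate distance on $\{0,1\}$-valued sequences reproduces the familiar value $1$ for distinct sequences, which is immediate from the definition of $c_i$, so that the classical disjoint-balls argument carries over unchanged. Everything else is a routine cardinality argument relying only on the triangle inequality already established in Proposition~\ref{metricsup}.
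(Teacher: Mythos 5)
Your proof is correct and is exactly the classical Kreyszig-style argument (uncountably many $\{0,1\}$-valued sequences at pairwise distance $1$, disjoint balls of radius $1/3$) that the paper itself invokes by reference and omits; the only addition is the routine check that the surrogate $c_i$ reproduces the distance $1$, which you handle correctly. No discrepancy with the paper's approach.
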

\begin{proof}
The proof is the same as shown in \cite[1.3-9]{Kreyszig:1978}, so it is omitted.
\end{proof}

Let us define the space analogous to the space $l^p$.

\begin{definition}\label{deflp}
Let $p \geq 1$ be a fixed real number. The set ${l}_{+}^{p}$ consists
of all sequences $x =({\eta}_{1}, {\eta}_{2}, {\eta}_{3}, \ldots )$
of nonnegative real numbers such that $\displaystyle\sum_{i=1}^{\infty} ({\eta}_{i})^{p} < \infty$,
whose metric is defined by
$ d(x, y)={\left[\displaystyle\sum_{i=1}^{\infty} {[c_{i}]}^{p}\right]}^{1/p}$,
where $y =({\mu}_{1}, {\mu}_{2}, {\mu}_{3}, \ldots )$ and $c_i$ is
defined as follows: $c_i = 0$ if ${\mu}_i = {\eta}_i $, and if ${\mu}_i > {\eta}_i$
(respect. ${\eta}_i > {\mu}_i$) then $c_i > 0$ is such that ${\mu}_i = {\eta}_i + c_i$.
\end{definition}

\begin{theorem}\label{lp+complete}
The space ${l}_{+}^{p}$ with the metric $ d(x,y)=
{\left[\displaystyle\sum_{i=1}^{\infty}
{[c_{i}]}^{p}\right]}^{1/p}$ exhibited above is complete.
\end{theorem}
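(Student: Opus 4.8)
The plan is to follow the classical proof that $\ell^{p}$ is complete, replacing every difference $|\,\cdot-\cdot\,|$ by the nonnegative ``gap'' $c_{i}$ of Definition~\ref{deflp} and invoking Minkowski's inequality, which involves only nonnegative quantities and hence is available verbatim in ${\mathbb R}_{0}^{+}$ (in the same spirit as the Cauchy--Schwarz inequality used in Proposition~\ref{R+1a}). First I would take a Cauchy sequence $(x_{n})$ in $l_{+}^{p}$, written $x_{n}=(\eta_{1}^{(n)},\eta_{2}^{(n)},\ldots)$. Given $\epsilon>0$ there is $K$ with $d(x_{n},x_{m})^{p}=\sum_{j=1}^{\infty}[c_{j}^{(n,m)}]^{p}<\epsilon^{p}$ for all $n,m>K$, where $c_{j}^{(n,m)}$ is the nonnegative number determined by $\max\{\eta_{j}^{(n)},\eta_{j}^{(m)}\}=\min\{\eta_{j}^{(n)},\eta_{j}^{(m)}\}+c_{j}^{(n,m)}$. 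In particular, for each fixed $j$ one has $[c_{j}^{(n,m)}]^{p}<\epsilon^{p}$, hence $c_{j}^{(n,m)}<\epsilon$; this says precisely that the real sequence $(\eta_{j}^{(n)})_{n}$ is Cauchy in ${\mathbb R}_{0}^{+}$, so it converges to some $\eta_{j}\ge 0$. Set $x=(\eta_{1},\eta_{2},\ldots)$; it then remains to prove that $x\in l_{+}^{p}$ and that $x_{n}\to x$ with respect to $d$.

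For the convergence, fix $n>K$ and a finite $N$. From $\sum_{j=1}^{N}[c_{j}^{(n,m)}]^{p}\le\epsilon^{p}$ for all $m>K$, I would let $m\to\infty$: since $\eta_{j}^{(m)}\to\eta_{j}$, continuity of $\max$ and $\min$ gives $\max\{\eta_{j}^{(n)},\eta_{j}^{(m)}\}\to\max\{\eta_{j}^{(n)},\eta_{j}\}$ and likewise for the minimum, so $c_{j}^{(n,m)}\to c_{j}^{(n,\infty)}$, the gap associated with $\eta_{j}$ and $\eta_{j}^{(n)}$. Passing to the limit in the finite sum yields $\sum_{j=1}^{N}[c_{j}^{(n,\infty)}]^{p}\le\epsilon^{p}$, and since $N$ is arbitrary, $\sum_{j=1}^{\infty}[c_{j}^{(n,\infty)}]^{p}\le\epsilon^{p}$, i.e. $d(x_{n},x)\le\epsilon$ for every $n>K$. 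To see that $x$ actually lies in $l_{+}^{p}$, note that $\eta_{j}\le\eta_{j}^{(n)}+c_{j}^{(n,\infty)}$ for every $j$, so by Minkowski's inequality in ${\mathbb R}_{0}^{+}$, $\bigl(\sum_{j}\eta_{j}^{p}\bigr)^{1/p}\le\bigl(\sum_{j}[\eta_{j}^{(n)}]^{p}\bigr)^{1/p}+\bigl(\sum_{j}[c_{j}^{(n,\infty)}]^{p}\bigr)^{1/p}\le\bigl(\sum_{j}[\eta_{j}^{(n)}]^{p}\bigr)^{1/p}+\epsilon<\infty$, because $x_{n}\in l_{+}^{p}$. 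Hence $x\in l_{+}^{p}$ and $x_{n}\to x$, so $l_{+}^{p}$ is complete.

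The step I expect to be the main obstacle — and the only place where the absence of additive inverses truly matters — is the passage $c_{j}^{(n,m)}\to c_{j}^{(n,\infty)}$ together with the replacement of the meaningless ``$\|x-x_{n}\|_{p}\le\epsilon$'' by the inequality $\sum_{j}[c_{j}^{(n,\infty)}]^{p}\le\epsilon^{p}$, obtained through a finite-$N$ truncation and two successive limits ($m\to\infty$, then $N\to\infty$). Everything else — the coordinatewise Cauchy argument and the closing boundedness estimate — transfers directly, once one observes that Minkowski's inequality and the triangle inequality for $d$ (already implicit in Definition~\ref{deflp}) are assertions about nonnegative reals and hold unchanged. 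I would also remark in passing that this is the exact analogue of the proof of Theorem~\ref{lcomplete} for $l_{+}^{\infty}$, with $\sup_{j}$ replaced by $\bigl(\sum_{j}(\cdot)^{p}\bigr)^{1/p}$.
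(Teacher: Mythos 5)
Your proof is correct and follows essentially the same route as the paper, which itself defers to the classical Kreyszig argument (coordinatewise Cauchy limits, finite truncation, the two successive limits $m\to\infty$ then $N\to\infty$, and Minkowski's inequality for nonnegative reals); in fact your write-up is more detailed than the paper's sketch. The only cosmetic difference is in showing $x\in l_{+}^{p}$: the paper splits the coordinates into two cases according to the sign of $x^{(i)}-x_{m}^{(i)}$ (Minkowski for one, comparison of positive series for the other), whereas you use the single uniform bound $\eta_{j}\le\eta_{j}^{(n)}+c_{j}^{(n,\infty)}$ followed by Minkowski, exactly as the paper does in its proof for $l_{+}^{\infty}$.
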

\begin{proof}
Recall that given two sequences $({\mu}_i)$ and $({\eta}_i )$
in ${l}_{+}^{p}$ the Minkowski inequality for sums reads as
\begin{eqnarray*}
{\left[\displaystyle\sum_{i=1}^{\infty} {|{\mu}_i +
{\eta}_i |}^{p}\right]}^{1/p} \leq {\left[\displaystyle
\sum_{j=1}^{\infty} {|{\mu}_j|}^{p}\right]}^{1/p} + {\left[\displaystyle
\sum_{k=1}^{\infty} {|{\eta}_k|}^{p}\right]}^{1/p}.
\end{eqnarray*}
Applying the Minkowski inequality as per \cite[1.5-4]{Kreyszig:1978}
with some adaptations, it follows that $d(x,y)$ is, in fact,
a metric. In order to prove the completeness of ${l}_{+}^{p}$, we proceed
similarly as in the proof of Theorem~\ref{lcomplete} with some
adaptations. The main adaptation is performed according to
the proof of completeness of $l^p$ in \cite[1.5-4]{Kreyszig:1978}
replacing the last equality $x=x_m +( x - x_m) \in l^p$
(after Eq.~(5)) by two equalities in order to avoid negative real numbers.
\begin{enumerate}
\item [ $\operatorname{(1)}$] If the $i$-th coordinate
$x^{(i)}- x_{m}^{(i)}$ of the sequence $x- x_m$ is
positive, then define $c_{m}^{(i)} = x^{(i)}- x_{m}^{(i)}$ and write
$x^{(i)} = x_{m}^{(i)} + c_{m}^{(i)}$. From Minkowski
inequality, it follows that the sequence $(x^{(i)})_i$
is in $l_{+}^{p}$.
\item [ $\operatorname{(2)}$] If $x^{(j)}- x_{m}^{(j)}$
is negative, then define $c_{m}^{(j)}= x_{m}^{(j)} -
x^{(j)}$ and write $x_{m}^{(j)}= x^{(j)} + c_{m}^{(j)} $. Since
$x_m \in l_{+}^{p}$, from the comparison criterion for
positive series it follows that the sequence $(x^{(j)})_j$ is also in $l_{+}^{p}$.
\end{enumerate}
\end{proof}

\begin{theorem}\label{lp+separable}
The space ${l}_{+}^{p}$ is separable.
\end{theorem}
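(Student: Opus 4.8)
The plan is to mimic the classical argument that $l^{p}$ is separable by exhibiting a countable dense subset of finitely supported sequences with nonnegative rational entries. Concretely, let $M$ denote the set of all sequences $y = ({\eta}_1 , \ldots , {\eta}_N , 0, 0, \ldots )$ with $N \in {\mathbb N}$ and each ${\eta}_i \in {\mathbb Q}_{0}^{+}$. First I would check that $M$ is countable: it is the countable union $\bigcup_{N \geq 1} ({\mathbb Q}_{0}^{+})^{N}$, and each factor $({\mathbb Q}_{0}^{+})^{N}$ is countable since ${\mathbb Q}_{0}^{+}$ is. Note also that every element of $M$ indeed lies in ${l}_{+}^{p}$, as only finitely many coordinates are nonzero.

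The core step is to show that $M$ is dense in ${l}_{+}^{p}$ with respect to the metric $d$ of Definition~\ref{deflp}. Given $x = ({\xi}_i ) \in {l}_{+}^{p}$ and $\epsilon > 0$, since $\sum_{i=1}^{\infty} {\xi}_i^{p}$ converges there is an $N$ with $\sum_{i > N} {\xi}_i^{p} < \epsilon^{p}/2$. For each $i \leq N$, I would use the density of ${\mathbb Q}_{0}^{+}$ in ${\mathbb R}_{0}^{+}$ (taking ${\eta}_i = 0$ when ${\xi}_i = 0$, and otherwise choosing a nonnegative rational in a sufficiently small window around ${\xi}_i$) to pick ${\eta}_i \in {\mathbb Q}_{0}^{+}$ with $|{\xi}_i - {\eta}_i|^{p} < \epsilon^{p}/(2N)$. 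Set $y = ({\eta}_1 , \ldots , {\eta}_N , 0, 0, \ldots ) \in M$. Reading off the numbers $c_i$ prescribed in Definition~\ref{deflp}: for $i \leq N$ the max/min case-split returns exactly $c_i = |{\xi}_i - {\eta}_i|$, while for $i > N$ one has $c_i = {\xi}_i$ because the second sequence has vanishing tail. Hence $d(x,y)^{p} = \sum_{i=1}^{N} c_i^{p} + \sum_{i > N} {\xi}_i^{p} < \epsilon^{p}/2 + \epsilon^{p}/2 = \epsilon^{p}$, so $d(x,y) < \epsilon$, proving density.

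I expect no serious obstacle here; the argument is essentially a transcription of the vector-space proof. The only points that genuinely require care are purely bookkeeping inside ${\mathbb R}_{0}^{+}$: first, that the approximating rationals can always be chosen nonnegative, so that $y$ really belongs to ${l}_{+}^{p}$; and second, that the quantity $c_i$ produced by the case-split in Definition~\ref{deflp} coincides with the ordinary absolute difference $|{\xi}_i - {\eta}_i|$, which is what legitimizes the final estimate. With these observations in place, $M$ is a countable dense subset of ${l}_{+}^{p}$, and therefore ${l}_{+}^{p}$ is separable.
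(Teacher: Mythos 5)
Your proof is correct and is essentially the same argument the paper invokes: the paper simply states that the proof "follows the same line of" Kreyszig's standard separability proof for $l^{p}$ (his 1.3-10), which is precisely the countable dense set of finitely supported nonnegative rational sequences you construct, together with the observation that the paper's $c_i$ reduces to the ordinary absolute difference. You have merely written out the details the paper leaves to the reference.
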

\begin{proof}
The proof follows the same line of \cite[1.3-10]{Kreyszig:1978}.
\end{proof}

\begin{definition}\label{continon[a,b]}
Let $I=[a, b]$ be a closed interval in ${\mathbb R}_{0}^{+}$,
where $a\geq 0$ and $a < b$. Then ${\operatorname{C}}_{+}[a, b]$ is
the set of all continuous nonnegative real valued functions on $I=[a, b]$,
whose metric is defined by $d(f(t), g(t)) =
\displaystyle\max_{t \in I} \{c(t)\}$, where $c(t)$ is given by
$\max \{ f(t), g(t) \} =\min \{ f(t), g(t) \} + c(t)$.
\end{definition}

\begin{theorem}\label{cont[a,b]complete}
The metric space $({\operatorname{C}}_{+}[a, b], d)$, where $d$ is given in
Definition~\ref{continon[a,b]}, is complete.
\end{theorem}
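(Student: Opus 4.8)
The plan is to reproduce the classical proof that $C[a,b]$ with the sup-metric is complete, following the same scheme as in the proof of Theorem~\ref{lcomplete}, but recording all magnitudes through the subtraction-free quantity from Definition~\ref{continon[a,b]} rather than through $|f(t)-g(t)|$. For $s,r\in{\mathbb R}_{0}^{+}$ write $\delta(s,r)$ for the unique nonnegative real number with $\max\{s,r\}=\min\{s,r\}+\delta(s,r)$; this is precisely the restriction of the usual metric of ${\mathbb R}$ to ${\mathbb R}_{0}^{+}$ (a one-coordinate instance of Proposition~\ref{R+1a}), so it is continuous in both arguments and satisfies the triangle inequality. With this notation $d(f,g)=\max_{t\in I}\delta(f(t),g(t))$, and the maximum is attained since $t\mapsto\delta(f(t),g(t))$ is continuous on the compact interval $I$. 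Now fix a Cauchy sequence $(f_n)$ in ${\operatorname{C}}_{+}[a,b]$. For each fixed $t\in I$, the inequality $\delta(f_n(t),f_m(t))\le d(f_n,f_m)$ shows that $(f_n(t))_n$ is Cauchy in ${\mathbb R}_{0}^{+}$; as ${\mathbb R}_{0}^{+}$ is complete, it converges to a nonnegative real number which we call $f(t)$, defining a function $f:I\longrightarrow{\mathbb R}_{0}^{+}$.

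Next I would upgrade this pointwise convergence to uniform convergence in $d$. Given $\epsilon>0$, choose $N$ with $d(f_n,f_m)<\epsilon$ for all $n,m>N$; then $\delta(f_n(t),f_m(t))<\epsilon$ for every $t\in I$ and all $n,m>N$. Holding $n$ and $t$ fixed and letting $m\to\infty$, continuity of $\delta$ in its second argument yields $\delta(f_n(t),f(t))\le\epsilon$ for all $t\in I$ and all $n>N$. Taking the maximum over $t\in I$ gives $d(f_n,f)\le\epsilon$ for $n>N$ — provided we already know $f\in{\operatorname{C}}_{+}[a,b]$, which is the one remaining point.

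To see that $f$ is continuous (it is automatically nonnegative, being a pointwise limit of nonnegative numbers), fix $t_0\in I$ and $\epsilon>0$, and pick $n>N$ as above, so $\delta(f_n(t),f(t))\le\epsilon$ for all $t$. By continuity of $f_n$ at $t_0$ there is $\eta>0$ with $\delta(f_n(t),f_n(t_0))<\epsilon$ whenever $t\in I$ and $|t-t_0|<\eta$. Using the triangle inequality for $\delta$ on ${\mathbb R}_{0}^{+}$,
\[
\delta(f(t),f(t_0))\le\delta(f(t),f_n(t))+\delta(f_n(t),f_n(t_0))+\delta(f_n(t_0),f(t_0))<3\epsilon
\]
for all such $t$, so $f$ is continuous at $t_0$. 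Hence $f\in{\operatorname{C}}_{+}[a,b]$, and by the previous paragraph $f_n\longrightarrow f$ with respect to $d$; therefore $({\operatorname{C}}_{+}[a,b],d)$ is complete.

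The only genuine obstacle is the subtraction-free bookkeeping: one must check that $\delta(s,r)$ passes correctly to the limit (so that the estimate $\delta(f_n(t),f_m(t))<\epsilon$ survives $m\to\infty$) and that it obeys a triangle inequality on ${\mathbb R}_{0}^{+}$. Both follow at once from identifying $\delta$ with the ordinary distance restricted to ${\mathbb R}_{0}^{+}$, which was already verified in substance in Proposition~\ref{R+1a}; after that identification the argument is the textbook one, with all intermediate quantities remaining nonnegative as required by Remark~\ref{mainremark}.
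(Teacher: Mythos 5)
Your proposal is correct and follows essentially the same route as the paper: extract the pointwise limit $f$ from the Cauchy condition via completeness of ${\mathbb R}_{0}^{+}$, let $m\to\infty$ in the subtraction-free estimate to get uniform convergence, and conclude $f$ is continuous. The only difference is cosmetic — you spell out the $3\epsilon$ continuity argument where the paper simply invokes that a uniform limit of continuous functions is continuous — and your explicit identification of $\delta$ with the restricted real metric makes the bookkeeping cleaner.
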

\begin{proof}
The proof follows the same lines as the standard one with some modifications.
Let $(f_{m})$ be a Cauchy sequence in ${\operatorname{C}}_{+}[a, b]$. Given $\epsilon > 0$
there exists a positive integer $N$ such that, for all $m, n > N$, it follows that
\begin{eqnarray}\label{In1}
d(f_{m} , f_{n}) = \displaystyle\max_{t \in I} \{c_{m, n} (t)\} < \epsilon,
\end{eqnarray}
where $\max \{ f_{m} (t) , f_{n} (t) \} = \min \{ f_{m} (t) , f_{n} (t) \} + c_{m, n}(t)$.
Thus, for any fixed $t_0 \in I$ we have $c_{m, n} (t_0 )  < \epsilon$,
for all $m, n > N$. This means that $(f_1 (t_0 ), f_2 (t_0 ), \ldots )$ is a
Cauchy sequence in ${\mathbb R}_{0}^{+}$, which converges to $f(t_0 )$ when
$m \longrightarrow \infty$ since ${\mathbb R}_{0}^{+}$ is complete. We then
define a function $f: [a, b] \longrightarrow {\mathbb R}_{0}^{+}$ such that
for each $t \in [a, b]$, we put $f(t)$.
Taking $n \longrightarrow \infty$ in (\ref{In1}) we obtain
$\displaystyle\max_{t \in I} \{c_{m} (t)\} \leq \epsilon$ for all $m > N$, where
$\max \{ f_{m} (t) , f(t) \} = \min \{ f_{m} (t) , f(t) \} + c_{m}(t)$, which
implies $c_{m}(t)\leq \epsilon$ for all $t \in I$. This fact means that
$(f_{m}(t))$ converges to $f(t)$ uniformly on $I$, i.e., $f \in
{\operatorname{C}}_{+}[a, b]$ because the functions $f_{m}$'s are continuous on $I$.
Therefore, ${\operatorname{C}}_{+}[a, b]$ is complete, as desired.
\end{proof}

\subsection{Interesting Semi-Vector Spaces}\label{subsec2}

In this section we exhibit semi-vector spaces over $K= {\mathbb R}_{0}^{+}$
derived from semi-metrics, semi-metric-preserving functions, semi-norms,
semi-inner products and sub-linear functionals.

\begin{theorem}\label{teo1}
Let $X$ be a semi-metric space and ${ \mathcal M}_{X}=\{ d: X \times X\longrightarrow
{\mathbb R}; d$ $\operatorname{is \ a \ semi-metric \ on} X\}$.
Then $({ \mathcal M}_{X}, +, \cdot )$ is a semi-vector space over ${\mathbb R}_{0}^{+}$,
where $+$ and $\cdot$ are the
addition and the scalar multiplication (in ${\mathbb R}_{0}^{+}$) pointwise,
respectively.
\end{theorem}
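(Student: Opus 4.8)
The plan is to verify directly that $\mathcal{M}_X$, with the pointwise operations, satisfies each item of Definition~\ref{defSVS}. The substantive part is showing that $\mathcal{M}_X$ is closed under pointwise addition and under pointwise multiplication by scalars from $\mathbb{R}_0^{+}$; once this is settled, items~$(2)$--$(5)$ follow at once by evaluating both sides at an arbitrary $(x,y)\in X\times X$ and applying the corresponding arithmetic identity of $\mathbb{R}_0^{+}$.

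First I would check closure. Fix $d_1,d_2\in\mathcal{M}_X$ and $\lambda\in\mathbb{R}_0^{+}$, with $d_1+d_2$ and $\lambda d_1$ defined pointwise. Nonnegativity of $d_1+d_2$ and of $\lambda d_1$ is clear, since sums and nonnegative multiples of nonnegative reals are nonnegative; symmetry holds because $(d_1+d_2)(x,y)=d_1(x,y)+d_2(x,y)=d_1(y,x)+d_2(y,x)=(d_1+d_2)(y,x)$, and likewise $(\lambda d_1)(x,y)=\lambda\, d_1(x,y)=\lambda\, d_1(y,x)=(\lambda d_1)(y,x)$. On the diagonal $(d_1+d_2)(x,x)=0$ and $(\lambda d_1)(x,x)=0$; conversely $(d_1+d_2)(x,y)=0$ forces $d_1(x,y)=d_2(x,y)=0$, hence $x=y$, while for $\lambda d_1$ one treats $\lambda>0$ via $(\lambda d_1)(x,y)=0\iff d_1(x,y)=0\iff x=y$ and $\lambda=0$ via the remark that $\lambda d_1$ is then the identically zero function, which is a semi-metric. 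If the notion of semi-metric in use also keeps the triangle inequality, it survives too: summing the inequalities for $d_1$ and $d_2$ gives it for $d_1+d_2$, and multiplying the one for $d_1$ by $\lambda\ge 0$ gives it for $\lambda d_1$. Hence $d_1+d_2,\lambda d_1\in\mathcal{M}_X$, so the operations are well defined on $\mathcal{M}_X$.

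Next I would establish item~$(1)$: that $(\mathcal{M}_X,+)$ is an abelian monoid with the additive cancellation law. Commutativity and associativity descend pointwise from $(\mathbb{R},+)$. The additive identity is the zero semi-metric $0_{\mathcal{M}_X}$ given by $0_{\mathcal{M}_X}(x,y)=0$ for all $x,y$, which lies in $\mathcal{M}_X$ by the previous paragraph and serves as the null vector $0_V$ of Definition~\ref{defSVS}. For cancellation, $d_1+d_2=d_1+d_3$ means $d_1(x,y)+d_2(x,y)=d_1(x,y)+d_3(x,y)$ in $\mathbb{R}$ for every $(x,y)$, so cancellation in $\mathbb{R}$ yields $d_2(x,y)=d_3(x,y)$, i.e. $d_2=d_3$. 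Finally, for $\alpha,\beta\in\mathbb{R}_0^{+}$ and $d,d'\in\mathcal{M}_X$ the identities $\alpha(d+d')=\alpha d+\alpha d'$, $(\alpha+\beta)d=\alpha d+\beta d$, $(\alpha\beta)d=\alpha(\beta d)$ and $1\cdot d=d$ each reduce, after evaluating at an arbitrary $(x,y)$, to the corresponding identity in $\mathbb{R}_0^{+}$, giving items~$(2)$--$(5)$ and completing the argument.

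I do not anticipate a genuine obstacle: the whole proof collapses to pointwise arithmetic over $\mathbb{R}_0^{+}$, and in particular --- in contrast with the Euclidean-type metrics of Propositions~\ref{R+1a}--\ref{R+3a} --- here the semi-metric values are ordinary nonnegative reals, so no ``truncated subtraction'' is needed and the triangle inequality (if imposed) is additive and positively homogeneous, hence automatically preserved. The one point worth care is the scalar $0$: the statement is correct exactly under a convention of ``semi-metric'' for which the identically zero function qualifies, equivalently for which $\mathcal{M}_X$ possesses an additive identity, and I would make this convention explicit at the outset. It is also worth noting, though it is not part of the claim, that $\mathcal{M}_X$ is genuinely a semi-vector space and not a vector space, and in fact is simple: if $d+d'=0_{\mathcal{M}_X}$ with $d,d'\in\mathcal{M}_X$, then pointwise nonnegativity forces $d=d'=0_{\mathcal{M}_X}$, so no nonzero element is symmetrizable.
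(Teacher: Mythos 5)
Your proof is correct and follows essentially the same route as the paper's: a direct pointwise verification that $\mathcal{M}_X$ is closed under addition and scalar multiplication (nonnegativity, symmetry, vanishing on the diagonal, and the triangle inequality), with the remaining axioms of Definition~\ref{defSVS} reducing to arithmetic identities in $\mathbb{R}_0^{+}$, which is exactly what the paper does before dismissing those axioms as routine. The one superfluous step is your verification of the separation property $(d_1+d_2)(x,y)=0\Rightarrow x=y$: under the paper's convention a semi-metric need only satisfy $d(x,x)=0$ and not its converse (as you yourself observe, the identically zero function must qualify for the statement to hold, e.g.\ to handle $\lambda=0$ and to provide the null vector), so that clause can simply be dropped.
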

\begin{proof}
We first show that ${ \mathcal M}_{X}$ is closed under addition.
Let $d_1 , d_2 \in { \mathcal M}_{X}$ and set
$d:= d_1 + d_2$. It is clear that $d$ is nonnegative real-valued
function. Moreover, for all $x, y \in X$, $d(x, y) = d(y, x)$.
Let $x \in X$; $d(x, x) = d_1(x, x) + d_2 (x,x) =0$.
For all $x, y, z \in X$, $d(x, z)=d_1 (x, z) + d_2 (x, z)\leq [d_1 (x, y) + d_2 (x, y)]+
[d_1 (y, z) + d_2 (y, z)]= d(x, y) + d(y, z)$.

Let us show that ${ \mathcal M}_{X}$ is closed under scalar multiplication. Let $d_1
\in { \mathcal M}_{X}$ and define $d = \lambda d_1$, where
$\lambda \in {\mathbb R}_{0}^{+}$. It is clear that $d$ is real-valued nonnegative and for all
$x, y \in X$, $d(x, y)=d(y, x)$. Moreover, if $x \in X$, $d(x, x)=0$.
For all $x, y, z \in X$, $d(x, z)=\lambda d_1 (x, z)\leq \lambda [d_1 (x, y)
+ d_1 (y, z)]= d(x, y) + d(y, z)$.
This means that ${ \mathcal M}_{X}$ is closed under scalar multiplication.

It is easy to see that $({ \mathcal M}_{X}, +, \cdot )$ satisfies the
other conditions of Definition~\ref{defSVS}.
\end{proof}

Let $(X, d)$ be a metric space. In~\cite{Corazza:1999}, Corazza investigated
interesting functions $f:{\mathbb R}_{0}^{+}\longrightarrow {\mathbb R}_{0}^{+}$
such that the composite of $f$ with $d$, i.e., $X \times X \xrightarrow{d}
{{\mathbb R}_{0}^{+}} \xrightarrow{f} {{\mathbb R}_{0}^{+}}$ also generates
a metric on $X$. Let us put this concept formally.

\begin{definition}\label{metricprese}
Let $f:{\mathbb R}_{0}^{+}\longrightarrow {\mathbb R}_{0}^{+}$
be a function. We say that $f$ is metric-preserving if for
all metric spaces $(X, d)$, the composite $f \circ d$ is a metric.
\end{definition}

To our purpose we will consider semi-metric preserving functions as follows.

\begin{definition}\label{semi-metricprese}
Let $f:{\mathbb R}_{0}^{+}\longrightarrow {\mathbb R}_{0}^{+}$ be a
function. We say that $f$ is semi-metric-preserving if for
all semi-metric spaces $(X, d)$, the composite $f \circ d$ is a semi-metric.
\end{definition}

We next show that the set of semi-metric preserving functions has a semi-vector
space structure.

\begin{theorem}\label{teo1a}
Let ${ \mathcal F}_{pres}=\{ f:{\mathbb R}_{0}^{+}\longrightarrow
{\mathbb R}_{0}^{+}; f \operatorname{is \ semi-metric \ preserving} \}$.
Then $({ \mathcal F}_{pres}, +, \cdot )$ is a semi-vector space over ${\mathbb R}_{0}^{+}$,
where $+$ and $\cdot$ are the addition and the scalar multiplication
(in ${\mathbb R}_{0}^{+}$) pointwise, respectively.
\end{theorem}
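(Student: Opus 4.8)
The plan is to reduce the whole verification to the pointwise algebraic structure of $\mathbb{R}_{0}^{+}$ together with the closure properties of semi-metrics that were already obtained inside the proof of Theorem~\ref{teo1}. First I would recall the meaning of ``semi-metric'' in force here (exactly as in Theorem~\ref{teo1}): a map $d\colon X\times X\to\mathbb{R}$ that is nonnegative, symmetric, vanishes on the diagonal, and satisfies the triangle inequality — no separation axiom is imposed. The proof of Theorem~\ref{teo1} shows that, for a fixed set $X$, the family of semi-metrics on $X$ is closed under pointwise addition and under multiplication by any scalar $\lambda\in\mathbb{R}_{0}^{+}$ (including $\lambda=0$, the zero function being a semi-metric); this is the only external fact I will use.

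The heart of the argument is the closure of $\mathcal{F}_{pres}$ under $+$ and $\cdot$. Let $f,g\in\mathcal{F}_{pres}$ and let $(X,d)$ be an arbitrary semi-metric space. Since $f$ and $g$ are semi-metric-preserving, $f\circ d$ and $g\circ d$ are semi-metrics on $X$, hence so is their pointwise sum. The key elementary identity is that composition distributes over pointwise addition of functions $\mathbb{R}_{0}^{+}\to\mathbb{R}_{0}^{+}$: for all $x,y\in X$ one has $\big((f+g)\circ d\big)(x,y)=f(d(x,y))+g(d(x,y))=(f\circ d+g\circ d)(x,y)$, so $(f+g)\circ d$ is a semi-metric. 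As $(X,d)$ was arbitrary, $f+g\in\mathcal{F}_{pres}$. In the same way, for $\lambda\in\mathbb{R}_{0}^{+}$ and $f\in\mathcal{F}_{pres}$ one has $(\lambda f)\circ d=\lambda(f\circ d)$, which is a semi-metric by the proof of Theorem~\ref{teo1}; hence $\lambda f\in\mathcal{F}_{pres}$. In particular $\mathcal{F}_{pres}\neq\emptyset$: taking $\lambda=0$ gives the zero function $\mathbf{0}\in\mathcal{F}_{pres}$ (and $t\mapsto t$ lies in it as well).

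It then remains to check the axioms of Definition~\ref{defSVS}. Addition on $\mathcal{F}_{pres}$ is the restriction of pointwise addition of $\mathbb{R}_{0}^{+}$-valued functions, so it is commutative and associative and has $\mathbf{0}$ as neutral element; thus $(\mathcal{F}_{pres},+)$ is an abelian monoid. The additive cancellation law lifts coordinatewise: if $f+h=g+h$, then $f(t)+h(t)=g(t)+h(t)$ in $\mathbb{R}\supset\mathbb{R}_{0}^{+}$ for every $t$, whence $f(t)=g(t)$, i.e.\ $f=g$. Conditions $(2)$--$(5)$ likewise follow because $+$ and $\cdot$ act coordinatewise and the identities $\alpha(u+v)=\alpha u+\alpha v$, $(\alpha+\beta)v=\alpha v+\beta v$, $(\alpha\beta)v=\alpha(\beta v)$ and $1v=v$ hold value-by-value in $\mathbb{R}_{0}^{+}$.

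I expect no genuine obstacle here. The one point meriting care is the bookkeeping that $(f+g)\circ d$ literally equals the pointwise sum $f\circ d+g\circ d$, and $(\lambda f)\circ d$ the scalar multiple $\lambda(f\circ d)$, so that the closure of the set of semi-metrics on a fixed space can be quoted verbatim. A secondary subtlety worth one sentence is that the proof genuinely requires the ``pseudometric'' reading of semi-metric: were one to demand $d(x,y)=0\Rightarrow x=y$, the scalar $\lambda=0$ would fail to preserve semi-metrics and $\mathcal{F}_{pres}$ would not even be closed under scalar multiplication — so this reading is (and, by Theorem~\ref{teo1}, must be) the operative one.
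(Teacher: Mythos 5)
Your proposal is correct and follows essentially the same route as the paper: verify that $\mathcal{F}_{pres}$ is closed under pointwise addition and scalar multiplication by checking that $(f+g)\circ d$ and $(\lambda f)\circ d$ are semi-metrics for every semi-metric space $(X,d)$, identify the null function as the zero vector, and note that the remaining axioms of Definition~\ref{defSVS} hold value-by-value. The only (harmless) stylistic difference is that you factor the closure step through the identities $(f+g)\circ d=f\circ d+g\circ d$ and $(\lambda f)\circ d=\lambda(f\circ d)$ and then quote the closure of the set of semi-metrics from Theorem~\ref{teo1}, whereas the paper re-verifies the semi-metric axioms for $(f+g)\circ d$ inline; the underlying computation is identical.
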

\begin{proof}
We begin by showing that ${ \mathcal F}_{pres}$ is closed under addition
and scalar multiplication pointwise.

Let $f, g \in { \mathcal F}_{pres}$. Given a semi-metric space $(X, d)$, we must prove that
$(f + g)\circ d$ is also semi-metric preserving. We know
that $[(f + g)\circ d] (x, y ) \geq 0$ for all $x, y \in X$. Let $x \in X$; then
$[(f + g)\circ d ](x, x )= f(d(x, x)) + g (d(x, x)) = 0$. It is clear that
$[(f + g ) \circ d](x, y)= [(f + g ) \circ d](y, x)$. Let $x, y, z \in X$. One has:
$[(f + g ) \circ d](x, y)= f(d(x, y)) + g(d(x, y))\leq [f(d(x, z))+ g(d(x, z))]+
[f(d(z, y))+ g(d(z, y))]= (f + g)(d(x, z)) + (f + g)(d(z, y))=
[(f + g)\circ d](x, z) + [(f + g)\circ d](z, y) $.

Here, we show that for each $f \in { \mathcal F}_{pres}$ and $ \alpha \in
{\mathbb R}_{0}^{+}$, it follows that $ \alpha f \in { \mathcal F}_{pres}$.
We show only the triangular inequality since the other conditions are immediate.
Let us calculate: $[\alpha f \circ d](x, y)= \alpha f (d(x, y))\leq
\alpha f (d(x, z)) + \alpha f (d(z, y)) = [\alpha f \circ d](x, z) + [\alpha f \circ d](z, y)$.

The null vector is the null function $0_{f}:{\mathbb R}_{0}^{+}\longrightarrow
{\mathbb R}_{0}^{+}$. The other conditions are easy to verify.
\end{proof}

\begin{theorem}\label{teo2}
Let $V$ be a semi-normed real vector space and ${ \mathcal N}_{V}=
\{ \| \ \|: V\longrightarrow {\mathbb R}; \| \ \|$
$\operatorname{is \ a \ semi-norm \ on} V\}$. Then $({ \mathcal N}_{V}, +,
\cdot )$ is a semi-vector space over ${\mathbb R}_{0}^{+}$,
where $+$ and $\cdot$ are addition and scalar multiplication
(in ${\mathbb R}_{0}^{+}$) pointwise, respectively.
\end{theorem}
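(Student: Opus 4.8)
The plan is to follow the same template as the proofs of Theorems~\ref{teo1} and~\ref{teo1a}: show that $\mathcal{N}_V$ is closed under pointwise addition and under pointwise scalar multiplication by elements of $\mathbb{R}_0^+$, exhibit the null vector, and then observe that the axioms $\operatorname{(1)}$--$\operatorname{(5)}$ of Definition~\ref{defSVS} are inherited pointwise from the semi-field $\mathbb{R}_0^+$ and from the (real) vector-space structure of the real-valued functions on $V$.

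First I would verify closure under addition. Given semi-norms $p,q$ on $V$, put $r:=p+q$. Then $r(v)=p(v)+q(v)\ge 0$ for every $v\in V$; absolute homogeneity holds since $r(\lambda v)=p(\lambda v)+q(\lambda v)=|\lambda|\,p(v)+|\lambda|\,q(v)=|\lambda|\,r(v)$ for all $\lambda\in\mathbb{R}$; and the triangle inequality follows by adding those of $p$ and $q$: $r(u+v)=p(u+v)+q(u+v)\le [p(u)+p(v)]+[q(u)+q(v)]=r(u)+r(v)$. Hence $r\in\mathcal{N}_V$. Next, closure under scalar multiplication: for a semi-norm $p$ and $\alpha\in\mathbb{R}_0^+$, set $r:=\alpha p$; then $r$ is nonnegative, $r(\lambda v)=\alpha|\lambda|\,p(v)=|\lambda|\,r(v)$, and $r(u+v)=\alpha\,p(u+v)\le\alpha[p(u)+p(v)]=r(u)+r(v)$, where the nonnegativity of $\alpha$ is precisely what preserves the inequality.

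Then I would take the null vector of $\mathcal{N}_V$ to be the null function $0_{f}\colon V\longrightarrow\mathbb{R}$, $0_{f}(v)=0$, which is trivially a semi-norm and is the identity for pointwise addition, with $\alpha\,0_f=0_f$ for every scalar. The remaining checks — that $(\mathcal{N}_V,+)$ is an abelian monoid satisfying the additive cancellation law (if $p+q=p+r$ pointwise then $q=r$ pointwise, by cancellation in $\mathbb{R}$), together with the distributivity and associativity axioms $\operatorname{(2)}$--$\operatorname{(5)}$ — all reduce to the corresponding identities evaluated at each $v\in V$, exactly as in Theorems~\ref{teo1} and~\ref{teo1a}.

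I do not anticipate a genuine obstacle. The one point deserving a moment's attention is that every property being verified must survive sums and nonnegative scalar multiples: this holds for nonnegativity, for absolute homogeneity, and for the sub-additive triangle inequality, but it would \emph{fail} for a nondegeneracy condition such as ``$p(v)=0\Rightarrow v=0$''. That is exactly why the theorem is stated for $\mathcal{N}_V$ (all semi-norms) rather than for the set of honest norms, and why the argument goes through verbatim.
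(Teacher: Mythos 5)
Your proof is correct and follows essentially the same route as the paper's: verify closure of ${\mathcal N}_{V}$ under pointwise addition and nonnegative scalar multiplication, take the null function as the zero vector, and note that the remaining axioms of Definition~\ref{defSVS} are inherited pointwise. One small aside: your closing remark is slightly off, since nondegeneracy \emph{does} survive sums of nonnegative functions (a sum of norms is a norm, as the paper's subsequent remark on ${\mathcal N}_{V}^{\diamond}$ indicates); it is only multiplication by the scalar $0$ that destroys it.
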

\begin{proof}
From hypotheses, ${ \mathcal N}_{V}$ is non-empty. Let ${\| \ \|}_{1} ,
{\| \ \|}_{2} \in { \mathcal N}_{V}$ and set $\| \ \|:=
{\| \ \|}_{1} + {\| \ \|}_{2}$. For all $v \in V$, $\| v \|\geq 0$.
If $v \in V$ and $\alpha \in {\mathbb R}$ then $\| \alpha v \|=|\alpha| \| v \|$.
For every $u, v \in V$, it follows that $\| u + v \|:= {\| u + v \|}_{1} +
{\| u + v \|}_{2}\leq ({ \| u \|}_{1} + {\| u \|}_{2} ) +
({\| v \|}_{1} + {\| v \|}_{2})= \| u \| + \| v \|$. Hence, ${ \mathcal N}_{V}$
is closed under addition.

We next show that ${ \mathcal N}_{V}$ is closed under scalar multiplication.
Let ${\| \ \|}_{1} \in { \mathcal N}_{V}$ and define
$\| \ \|:= \lambda {\| \ \|}_{1}$, where $\lambda \in {\mathbb R}_{0}^{+}$. For all
$v \in V$, $\| v \|\geq 0$. If $\alpha \in {\mathbb R}$ and $ v \in V$,
$ \| \alpha v \|= |\alpha |( \lambda {\| v\|}_{1})= |\alpha | \| v \|$.
Let $u, v \in V$. Then $\| u + v \|\leq \lambda {\| u \|}_{1}+
\lambda {\| v \|}_{1}=\|u\| + \|v\|$. Therefore, ${ \mathcal N}_{V}$
is closed under addition and scalar multiplication over ${\mathbb R}_{0}^{+}$.

The zero vector is the null function $ \textbf{0}: V \longrightarrow
{\mathbb R}$. The other conditions
of Definition~\ref{defSVS} are straightforward.
\end{proof}

\begin{remark}
Note that ${ \mathcal N}_{V}^{\diamond}=\{\| \ \|: V\longrightarrow {\mathbb R};
\| \ \|$ $\operatorname{is \ a \ norm \ on} V\}$
is also closed under both function addition and scalar multiplication pointwise.
\end{remark}

\begin{lemma}\label{prop1}
Let $T:V\longrightarrow W$ be a linear transformation.
\begin{itemize}
\item [ $\operatorname{(1)}$] If $\| \ \|:W\longrightarrow {\mathbb R}$ is a semi-norm on
$W$ then $\| \ \|\circ T: V \longrightarrow {\mathbb R}$ is a semi-norm on $V$.
\item [ $\operatorname{(2)}$] If $T$ is injective linear and $\| \ \|:
W\longrightarrow {\mathbb R}$ is a norm on $W$ then $\| \ \|\circ T$ is a norm on $V$.
\end{itemize}
\end{lemma}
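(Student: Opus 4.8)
The plan is to set $N := \|\ \|\circ T$, that is, $N(v) = \|T(v)\|$ for every $v \in V$, and then verify directly the defining properties of a semi-norm (resp.\ norm), using only the linearity of $T$ together with the corresponding properties of $\|\ \|$ on $W$. No passage to an ambient vector space is needed; each property reduces to a one-line computation, so I would simply organize the proof as two short blocks, one for each item.

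For part $\operatorname{(1)}$ I would check the semi-norm axioms in turn. Nonnegativity: $N(v) = \|T(v)\| \ge 0$ because $\|\ \|$ is a semi-norm on $W$; in particular, since $T$ is linear, $N(0_{V}) = \|T(0_{V})\| = \|0_{W}\| = 0$. Absolute homogeneity: for a scalar $\alpha$ and $v \in V$, linearity of $T$ and homogeneity of $\|\ \|$ give $N(\alpha v) = \|T(\alpha v)\| = \|\alpha\, T(v)\| = |\alpha|\,\|T(v)\| = |\alpha|\,N(v)$. Triangle inequality: for $u, v \in V$, linearity yields $T(u+v) = T(u) + T(v)$, hence $N(u+v) = \|T(u) + T(v)\| \le \|T(u)\| + \|T(v)\| = N(u) + N(v)$. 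This shows $N$ is a semi-norm on $V$.

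For part $\operatorname{(2)}$ the only extra requirement beyond $\operatorname{(1)}$ is definiteness, and this is exactly the step where injectivity is used. Assuming $\|\ \|$ is a norm on $W$ and $T$ is injective: if $N(v) = 0$, then $\|T(v)\| = 0$, so $T(v) = 0_{W}$ by definiteness of the norm on $W$; injectivity of $T$ then forces $v = 0_{V}$. Combined with part $\operatorname{(1)}$, this proves $N$ is a norm on $V$.

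I do not anticipate any genuine obstacle here: the argument is a routine transport of structure along a linear map, and the substance of the lemma is precisely the observation that injectivity is what upgrades the pullback of a norm from a semi-norm to a norm. The only point I would flag, perhaps in a remark, is that the injectivity hypothesis in $\operatorname{(2)}$ cannot be dropped — for instance, the zero map $T \equiv 0_{W}$ makes $N \equiv 0$, which is a semi-norm but not a norm whenever $V \neq \{0_{V}\}$.
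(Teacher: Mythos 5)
Your proposal is correct and follows the same route as the paper: verify nonnegativity, absolute homogeneity, and the triangle inequality directly from the linearity of $T$ and the semi-norm axioms on $W$. The paper explicitly proves only Item $\operatorname{(1)}$ and leaves $\operatorname{(2)}$ to the reader; your treatment of $\operatorname{(2)}$ via definiteness plus injectivity is exactly the intended (and correct) completion.
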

\begin{proof}
We only show Item~$\operatorname{(1)}$. It is clear that $[\| \ \|\circ T](v)
\geq 0$ for all $v \in V$. For all $\alpha
\in {\mathbb R}$ and $v \in V$, $[\| \ \|\circ T](\alpha v)=
| \alpha |  \| T(v) \| = | \alpha | [\| \ \|\circ T](v)$. Moreover, $ \forall \ v_1 , v_2 \in V$,
$[\| \ \|\circ T](v_1 + v_2)\leq [\| \ \|\circ T](v_1 )+ [\| \ \|\circ T](v_2 )$.
Therefore, $\| \ \|\circ T$ is a semi-norm on $V$.
\end{proof}

\begin{theorem}\label{teo2a}
Let $V$ and $W$ be two semi-normed vector spaces and $T:V\longrightarrow W$ be
a linear transformation. Then
$${ \mathcal N}_{V_{T}}=\{ \| \ \| \circ T:
V\longrightarrow {\mathbb R}; \| \ \| \operatorname{is \ a \ semi-norm \ on} W\}$$
is a semi-subspace of $({ \mathcal N}_{V}, +, \cdot )$.
\end{theorem}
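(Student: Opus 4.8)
The plan is to verify that $\mathcal{N}_{V_T}$ is a nonempty subset of $\mathcal{N}_V$ that is closed under the pointwise addition and scalar multiplication inherited from $\mathcal{N}_V$; by Definition~\ref{semi-subspace} this suffices. First I would observe that by Lemma~\ref{prop1}(1), for every semi-norm $\| \ \|$ on $W$ the composite $\| \ \| \circ T$ is a semi-norm on $V$, so $\mathcal{N}_{V_T} \subseteq \mathcal{N}_V$; and since $W$ is semi-normed by hypothesis, $\mathcal{N}_{V_T}$ contains at least one element, hence is nonempty.

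For closure under addition, take two elements of $\mathcal{N}_{V_T}$, say $\| \ \|_1 \circ T$ and $\| \ \|_2 \circ T$ with $\| \ \|_1, \| \ \|_2$ semi-norms on $W$. Their pointwise sum, evaluated at $v \in V$, is $\| T(v) \|_1 + \| T(v) \|_2 = (\| \ \|_1 + \| \ \|_2)(T(v)) = \bigl[(\| \ \|_1 + \| \ \|_2) \circ T\bigr](v)$. Since $\| \ \|_1 + \| \ \|_2$ is again a semi-norm on $W$ (this is exactly the content used in the proof of Theorem~\ref{teo2}), the sum $\| \ \|_1 \circ T + \| \ \|_2 \circ T$ equals $(\| \ \|_1 + \| \ \|_2) \circ T \in \mathcal{N}_{V_T}$. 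Closure under scalar multiplication is analogous: for $\lambda \in {\mathbb R}_0^+$ and a semi-norm $\| \ \|_1$ on $W$, one has $\bigl[\lambda(\| \ \|_1 \circ T)\bigr](v) = \lambda \| T(v) \|_1 = (\lambda \| \ \|_1)(T(v)) = \bigl[(\lambda \| \ \|_1) \circ T\bigr](v)$, and $\lambda \| \ \|_1$ is a semi-norm on $W$ by the argument in Theorem~\ref{teo2}, so the product lies in $\mathcal{N}_{V_T}$.

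The key point — and the only place any real care is needed — is the bookkeeping identity that composition with the fixed map $T$ commutes with the pointwise operations on functions $W \to {\mathbb R}$, i.e. $(\phi + \psi) \circ T = \phi\circ T + \psi\circ T$ and $(\lambda\phi)\circ T = \lambda(\phi\circ T)$; this is immediate from evaluating both sides at an arbitrary $v \in V$. There is no genuine obstacle here, since linearity of $T$ is not even needed for closure (it was already used in Lemma~\ref{prop1} to guarantee $\mathcal{N}_{V_T}\subseteq\mathcal{N}_V$). Having checked nonemptiness and both closure conditions, Definition~\ref{semi-subspace} gives that $\mathcal{N}_{V_T}$ is a semi-subspace of $(\mathcal{N}_V, +, \cdot)$, which completes the proof.
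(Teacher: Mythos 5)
Your proof is correct and follows essentially the same route as the paper: nonemptiness plus Lemma~\ref{prop1}(1) to place $\mathcal{N}_{V_T}$ inside $\mathcal{N}_V$, then the identities $(\| \ \|_1 + \| \ \|_2)\circ T = \| \ \|_1\circ T + \| \ \|_2\circ T$ and $(\lambda\| \ \|)\circ T = \lambda(\| \ \|\circ T)$ for closure. You are slightly more explicit than the paper in justifying that the sum and scalar multiple of semi-norms on $W$ are again semi-norms (via Theorem~\ref{teo2}), but the argument is the same.
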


\begin{proof}
From hypotheses, it follows that ${ \mathcal N}_{V_{T}}$ is non-empty.
From Item~$\operatorname{(1)}$ of Lemma~\ref{prop1}, it follows that $\| \ \|\circ T$
is a semi-norm on $V$. Let $f, g \in { \mathcal N}_{V_{T}}$, i.e.,
$f = {\| \ \|}_1 \circ T$ and $g = {\| \ \|}_2 \circ T$, where ${\| \ \|}_1$ and ${\| \ \|}_2$
are semi-norms on $W$. Then $f + g = [ {\| \ \|}_1 + {\| \ \|}_2 ]\circ T \in { \mathcal N}_{V_{T}}$.
For every nonnegative real number $\lambda$ and $f \in { \mathcal N}_{V_{T}}$,
$\lambda f = \lambda [ \| \ \|\circ T] = (\lambda \| \ \| )\circ T \in { \mathcal N}_{V_{T}}$.
\end{proof}

\begin{theorem}\label{teo2b}
Let ${\mathcal N}$ be the class whose members are $\{{ \mathcal N}_{V}\}$, where the
${ \mathcal N}_{V}$ are given in Theorem~\ref{teo2}. Let
$\operatorname{Hom}({\mathcal N})$ be the class whose members are the sets
$$\operatorname{hom}({ \mathcal N}_{V}, { \mathcal N}_{W})=\{
F_T:{ \mathcal N}_{V}\longrightarrow { \mathcal N}_{W};
F_T ( {\| \ \|}_{V})= {\| \ \|}_{V} \circ T\},$$
where $T: W \longrightarrow V$ is a linear transformation and
${\| \ \|}_{V}$ is a semi-norm on $V$. Then $({\mathcal N}, \operatorname{Hom}({\mathcal N}),
Id, \circ )$ is a category.
\end{theorem}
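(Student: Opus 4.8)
The plan is to check directly that the quadruple $({\mathcal N},\operatorname{Hom}({\mathcal N}),Id,\circ)$ satisfies the category axioms; the verification is essentially bookkeeping resting on Lemma~\ref{prop1}. First I would fix the data: the objects are the semi-vector spaces ${ \mathcal N}_{V}$, one for each semi-normed real vector space $V$; a morphism ${ \mathcal N}_{V}\longrightarrow { \mathcal N}_{W}$ is a map $F_T$ induced by a linear transformation $T: W\longrightarrow V$ via $F_T({\| \ \|}_{V})={\| \ \|}_{V}\circ T$ (so the assignment reverses direction: $T$ runs from $W$ to $V$ while $F_T$ runs from ${ \mathcal N}_{V}$ to ${ \mathcal N}_{W}$), and, as is customary, each morphism is regarded as tagged with its domain and codomain so that the hom-classes are pairwise disjoint. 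The first point to verify is that $F_T$ really maps ${ \mathcal N}_{V}$ into ${ \mathcal N}_{W}$: if ${\| \ \|}_{V}$ is a semi-norm on $V$, then Item~$\operatorname{(1)}$ of Lemma~\ref{prop1}, applied to the linear map $T: W\longrightarrow V$, shows that ${\| \ \|}_{V}\circ T$ is a semi-norm on $W$, i.e.\ an element of ${ \mathcal N}_{W}$.

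Next I would show that $\circ$ is a well-defined composition. Given $F_T: { \mathcal N}_{V}\longrightarrow { \mathcal N}_{W}$ with $T: W\longrightarrow V$ linear and $F_S: { \mathcal N}_{W}\longrightarrow { \mathcal N}_{U}$ with $S: U\longrightarrow W$ linear, one computes, for every semi-norm ${\| \ \|}_{V}\in { \mathcal N}_{V}$,
$$(F_S\circ F_T)({\| \ \|}_{V})=({\| \ \|}_{V}\circ T)\circ S={\| \ \|}_{V}\circ(T\circ S)=F_{T\circ S}({\| \ \|}_{V}),$$
so that $F_S\circ F_T=F_{T\circ S}$. Since $T\circ S: U\longrightarrow V$ is linear, $F_{T\circ S}$ lies in $\operatorname{hom}({ \mathcal N}_{V},{ \mathcal N}_{U})$; hence $\circ$ sends $\operatorname{hom}({ \mathcal N}_{W},{ \mathcal N}_{U})\times\operatorname{hom}({ \mathcal N}_{V},{ \mathcal N}_{W})$ into $\operatorname{hom}({ \mathcal N}_{V},{ \mathcal N}_{U})$.

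It then remains to check the identity and associativity laws, both of which reduce, through the relation $F_S\circ F_T=F_{T\circ S}$, to the corresponding facts about composition of linear maps. For the identities, put $Id_{{ \mathcal N}_{V}}:=F_{Id_V}$, where $Id_V: V\longrightarrow V$ is the identity linear map; then $F_{Id_V}({\| \ \|}_{V})={\| \ \|}_{V}\circ Id_V={\| \ \|}_{V}$, so $F_{Id_V}$ is the identity map of ${ \mathcal N}_{V}$, and $F_T\circ F_{Id_V}=F_{Id_V\circ T}=F_T=F_{T\circ Id_W}=F_{Id_W}\circ F_T$. For associativity, if $R,S,T$ are composable linear maps then $(F_R\circ F_S)\circ F_T=F_{S\circ R}\circ F_T=F_{T\circ(S\circ R)}=F_{(T\circ S)\circ R}=F_R\circ F_{T\circ S}=F_R\circ(F_S\circ F_T)$, using associativity of composition in the category of real vector spaces.

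There is no genuine mathematical obstacle here; the work is organizational. The one step that needs attention is keeping the variance straight — the correspondence $V\mapsto { \mathcal N}_{V}$, $T\mapsto F_T$ is contravariant, so composition orders reverse — and one should note that $({\mathcal N},\operatorname{Hom}({\mathcal N}),Id,\circ)$ is a large category, since its objects form a proper class, and that the hom-classes are to be treated as formally disjoint.
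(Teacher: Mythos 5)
Your proof is correct and takes essentially the same route as the paper's: both identify the identity morphism as $F_{Id_V}$, reduce composition and associativity to the (reversed) composition of the underlying linear maps, and treat the hom-classes as formally disjoint. Your version is slightly more complete in that it explicitly invokes Item~$\operatorname{(1)}$ of Lemma~\ref{prop1} to confirm that $F_T$ actually lands in ${\mathcal N}_{W}$, a point the paper leaves implicit in this proof.
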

\begin{proof}
The sets $\operatorname{hom}({ \mathcal N}_{V}, { \mathcal N}_{W})$ are pairwise disjoint.
For each ${ \mathcal N}_{V}$, there exists $Id_{({ \mathcal N}_{V})}$ given by
$Id_{({ \mathcal N}_{V})} ({\| \ \|}_{V})={\| \ \|}_{V}={\| \ \|}_{V}\circ Id_{(V)}$.
It is clear that if ${F}_{T}:{ \mathcal N}_{V}\longrightarrow { \mathcal N}_{W}$ then
${F}_{T}\circ Id_{({ \mathcal N}_{V})} = {F}_{T}$ and
$Id_{({ \mathcal N}_{W})}\circ {F}_{T} = {F}_{T}$.

It is easy to see that for every $T:W\longrightarrow V$ linear transformation,
the map $F_{T}$ is semi-linear, i.e.,
$F_{T}({\| \ \|}_{V}^{(1)} + {\| \ \|}_{V}^{(2)})=
F_{T}({\| \ \|}_{V}^{(1)}) + F_{T}({\| \ \|}_{V}^{(2)})$ and
$F_{T}(\lambda {\| \ \|}_{V})= \lambda F_{T}({\| \ \|}_{V})$,
for every ${\| \ \|}_{V}, {\| \ \|}_{V}^{(1)}, {\| \ \|}_{V}^{(2)} \in { \mathcal N}_{V}$
and $\lambda \in {\mathbb R}_{0}^{+}$.

Let ${ \mathcal N}_{U}, { \mathcal N}_{V}, { \mathcal N}_{W},
{ \mathcal N}_{X} \in {\mathcal N}$ and $F_{T_1} \in
\operatorname{hom}({ \mathcal N}_{U}, { \mathcal N}_{V})$,
$F_{T_2} \in \operatorname{hom}({ \mathcal N}_{V}, { \mathcal N}_{W})$,
$F_{T_3} \in \operatorname{hom}({ \mathcal N}_{W}, { \mathcal N}_{X})$, i.e.,
$${ \mathcal N}_{U}\xrightarrow{F_{T_1}} { \mathcal N}_{V}\xrightarrow{F_{T_2}} { \mathcal N}_{W}
\xrightarrow{F_{T_3}} { \mathcal N}_{X}.$$
The linear transformations are of the forms
$$X\xrightarrow{T_3} W\xrightarrow{T_2} V \xrightarrow{T_1} U
\xrightarrow{{\| \ \|}_{U}} {\mathbb R}.$$
The associativity $(F_{T_3}\circ F_{T_2})\circ F_{T_1}=F_{T_3}\circ (F_{T_2}\circ F_{T_1})$
follows from the associativity of composition of maps. Moreover, the map
$F_{T_3}\circ F_{T_2}\circ F_{T_1} \in \operatorname{Hom}({\mathcal N})$ because
$F_{T_3}\circ F_{T_2}\circ F_{T_1} = ({\| \ \|}_{U})\circ (T_1\circ T_2\circ T_3)$ and
$T_1\circ T_2\circ T_3$ is a linear transformation. Therefore, $({\mathcal N},
\operatorname{Hom}({\mathcal N}), Id, \circ )$ is a category, as required.
\end{proof}

\begin{theorem}\label{teo3}
Let $V$ be a real vector space endowed with a semi-inner product and let
${ \mathcal P}_{V}=\{ \langle \ ,
\ \rangle: V\times V\longrightarrow {\mathbb R}; \langle \
, \ \rangle$ $\operatorname{is \ a \ semi-inner \ product \ on} V\}$.
Then $({ \mathcal P}_{V}, +, \cdot )$ is a semi-vector space
over ${\mathbb R}_{0}^{+}$, where $+$ and $\cdot$ are
addition and scalar multiplication (in ${\mathbb R}_{0}^{+}$)
pointwise, respectively.
\end{theorem}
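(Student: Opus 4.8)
The plan is to repeat, essentially verbatim, the strategy used for Theorems~\ref{teo1},~\ref{teo1a} and~\ref{teo2}: one shows that ${\mathcal P}_{V}$ is closed under pointwise addition and under pointwise multiplication by scalars from ${\mathbb R}_{0}^{+}$, exhibits the null vector, and then observes that the remaining axioms of Definition~\ref{defSVS} are inherited coordinatewise from ${\mathbb R}$. Recall that a semi-inner product on the real vector space $V$ is a symmetric bilinear form $\langle\cdot,\cdot\rangle\colon V\times V\to{\mathbb R}$ which is positive semi-definite, i.e., $\langle v,v\rangle\ge 0$ for all $v\in V$; definiteness is \emph{not} imposed.

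First I would check closure under addition. Given $\langle\cdot,\cdot\rangle_{1},\langle\cdot,\cdot\rangle_{2}\in{\mathcal P}_{V}$, put $\langle\cdot,\cdot\rangle:=\langle\cdot,\cdot\rangle_{1}+\langle\cdot,\cdot\rangle_{2}$. Symmetry and bilinearity pass to the sum at once, and $\langle v,v\rangle=\langle v,v\rangle_{1}+\langle v,v\rangle_{2}\ge 0$ since both summands are nonnegative, whence $\langle\cdot,\cdot\rangle\in{\mathcal P}_{V}$. Next, for closure under scalar multiplication, take $\lambda\in{\mathbb R}_{0}^{+}$ and $\langle\cdot,\cdot\rangle_{1}\in{\mathcal P}_{V}$ and set $\langle\cdot,\cdot\rangle:=\lambda\langle\cdot,\cdot\rangle_{1}$; symmetry and bilinearity are again immediate, and the hypothesis $\lambda\ge 0$ is exactly what guarantees $\langle v,v\rangle=\lambda\langle v,v\rangle_{1}\ge 0$. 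This is the one place where nonnegativity of the scalar is used, mirroring the corresponding step in the proof of Theorem~\ref{teo2}.

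The null vector of ${\mathcal P}_{V}$ is the identically zero form ${\mathbf 0}\colon V\times V\to{\mathbb R}$, which is trivially symmetric and bilinear and satisfies ${\mathbf 0}(v,v)=0\ge 0$; it is precisely the absence of a definiteness requirement that makes ${\mathbf 0}$ a legitimate semi-inner product, so that $({\mathcal P}_{V},+)$ really is an abelian monoid (by contrast, the family of genuine inner products has no additive identity and hence is not a semi-vector space). The additive cancellation law holds pointwise: from $\langle\cdot,\cdot\rangle_{1}+\langle\cdot,\cdot\rangle_{3}=\langle\cdot,\cdot\rangle_{2}+\langle\cdot,\cdot\rangle_{3}$ one cancels the common real number $\langle u,v\rangle_{3}$ at each pair $(u,v)$, obtaining $\langle\cdot,\cdot\rangle_{1}=\langle\cdot,\cdot\rangle_{2}$. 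Finally, associativity and commutativity of $+$, the two distributive laws, the identity $(\alpha\beta)\langle\cdot,\cdot\rangle=\alpha(\beta\langle\cdot,\cdot\rangle)$, and $1\langle\cdot,\cdot\rangle=\langle\cdot,\cdot\rangle$ all hold because the operations are defined pointwise and ${\mathbb R}$ obeys these identities. I do not foresee any real obstacle; the only points deserving a moment's attention are that closure under scalars genuinely requires $\lambda\ge 0$ and that ${\mathbf 0}$ does belong to ${\mathcal P}_{V}$.
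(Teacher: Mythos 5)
Your proposal is correct and follows exactly the route the paper intends: the paper's proof of Theorem~\ref{teo3} simply states that the argument is analogous to those of Theorems~\ref{teo1} and~\ref{teo2}, and what you have written is precisely that analogous argument carried out in detail (closure under pointwise addition and nonnegative scalar multiplication, the zero form as null vector, and the remaining axioms inherited pointwise from ${\mathbb R}$). Your side remarks on where $\lambda\ge 0$ is needed and why the zero form qualifies as a semi-inner product are accurate and in the spirit of the paper's treatment of ${\mathcal N}_{V}$.
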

\begin{proof}
The proof is analogous to that of Theorems~\ref{teo1}~and~\ref{teo2}.
\end{proof}

\begin{proposition}\label{prop2}
Let $V, W$ be two vector spaces and $T_1 , T_2:V\longrightarrow W$ be two
linear transformations. Let us
consider the map $T_1 \times T_2 : V \times V \longrightarrow W\times W$ given
by $T_1 \times T_2 (u, v) = (T_1(u), T_2 (v))$. If $\langle \ , \ \rangle$
is a semi-inner product on $W$ then $\langle \ , \ \rangle \circ
T_1 \times T_2$ is a semi-inner product on $V$.
\end{proposition}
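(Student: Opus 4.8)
The plan is to set $\phi := \langle\ ,\ \rangle \circ (T_1 \times T_2)$, so that $\phi(u,v) = \langle T_1(u), T_2(v)\rangle$ for all $u, v \in V$, and then to verify the three defining properties of a semi-inner product on the real vector space $V$: bilinearity, symmetry, and positive semi-definiteness $\phi(u,u)\ge 0$. I would carry these out in exactly that order, since the first is purely formal while the last two carry all the content. I would keep $T_1$ and $T_2$ entirely independent throughout, as the statement demands.

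Bilinearity is the routine part and I expect no difficulty. For the first slot, the additivity and homogeneity of $T_1$ together with the left-linearity of $\langle\ ,\ \rangle$ give $\phi(u_1 + u_2, v) = \langle T_1(u_1) + T_1(u_2), T_2(v)\rangle = \phi(u_1,v) + \phi(u_2,v)$ and $\phi(\lambda u, v) = \langle \lambda T_1(u), T_2(v)\rangle = \lambda\,\phi(u,v)$; the second slot is identical, now invoking the linearity of $T_2$ and the right-linearity of $\langle\ ,\ \rangle$. These steps use nothing beyond the linearity of the two maps and the bilinearity already built into $\langle\ ,\ \rangle$.

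The substance, and the main obstacle, lies in symmetry and positivity. For symmetry I would start from $\phi(u,v) = \langle T_1(u), T_2(v)\rangle$ and apply the symmetry of $\langle\ ,\ \rangle$ on $W$ to obtain $\phi(u,v) = \langle T_2(v), T_1(u)\rangle$; to conclude $\phi(u,v) = \phi(v,u) = \langle T_1(v), T_2(u)\rangle$ I must then reconcile $\langle T_2(v), T_1(u)\rangle$ with $\langle T_1(v), T_2(u)\rangle$. The indices on $T_1$ and $T_2$ are crossed, and no axiom of $\langle\ ,\ \rangle$ closes this gap for two independent maps: this matching is the genuine difficulty. For positivity I would compute $\phi(u,u) = \langle T_1(u), T_2(u)\rangle$ and attempt to bound it below by $0$ using the positive semi-definiteness of $\langle\ ,\ \rangle$ on $W$; but that hypothesis only controls $\langle w, w\rangle$ with equal arguments, whereas here $T_1(u)$ and $T_2(u)$ need not coincide, so the sign of $\phi(u,u)$ is not pinned down by the hypotheses alone.

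Accordingly, I expect the decisive step to be precisely this: both symmetry and diagonal nonnegativity require the two arguments $T_1(\,\cdot\,)$ and $T_2(\,\cdot\,)$ to line up, which two arbitrary linear maps do not do. Any valid argument for the statement as worded must therefore isolate an additional structural reason why $\langle T_1(u), T_2(v)\rangle$ stays symmetric and nonnegative on the diagonal, and it is supplying that reason—rather than the formal bilinearity check—that I anticipate to be the crux of the proof.
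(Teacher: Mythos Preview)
Your diagnosis is correct, and the obstruction you locate is not a gap in your argument but in the proposition itself. The paper's own ``proof'' reads in full: \emph{``The proof is immediate, so it is omitted.''} It therefore offers nothing against which to compare your approach, and in particular it does not address the mismatch you identify between the two slots.

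In fact the proposition is false as stated. Take $V=W=\mathbb{R}^{2}$ with the standard dot product, $T_{1}=\mathrm{Id}$, and $T_{2}(x,y)=(y,0)$. Then $\phi(u,v)=\langle T_{1}(u),T_{2}(v)\rangle=u_{1}v_{2}$, so with $u=(1,0)$ and $v=(0,1)$ one gets $\phi(u,v)=1\neq 0=\phi(v,u)$, and with $u=(1,-1)$ one gets $\phi(u,u)=-1<0$. Thus both symmetry and positive semi-definiteness fail, exactly at the places you flagged. Your bilinearity verification is fine and is all that survives in general. The statement becomes correct (and then genuinely immediate) under the additional hypothesis $T_{1}=T_{2}$, in which case $\phi(u,v)=\langle T(u),T(v)\rangle$ inherits symmetry and diagonal nonnegativity directly from $\langle\ ,\ \rangle$; this is presumably what was intended.
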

\begin{proof}
The proof is immediate, so it is omitted.
\end{proof}

Let $V$ be a real vector space. Recall that a sub-linear functional on $V$
is a functional $t: V\longrightarrow {\mathbb R}$ which is sub-additive:
$\forall \ u, v \in V$, $t(u + v)\leq t(u) + t(v)$; and positive-homogeneous:
$\forall \ \alpha \in {\mathbb R}_{0}^{+}$ and $\forall \ v \in V$,
$t(\alpha v ) =\alpha t(v)$.

\begin{theorem}\label{teo4}
Let $V$ be a real vector space. Let us consider ${ \mathcal S}_{V}=
\{ S: V\longrightarrow
{\mathbb R};$ $S \operatorname{is}  \operatorname{sub-linear} \operatorname{on} V\}$.
Then $({ \mathcal S}_{V}, +, \cdot )$ is a semi-vector space on
${\mathbb R}_{0}^{+}$, where $+$ and $\cdot$ are
addition and scalar multiplication (in ${\mathbb R}_{0}^{+}$) pointwise, respectively.
\end{theorem}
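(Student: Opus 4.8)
The plan is to follow the same template as in the proofs of Theorems~\ref{teo1},~\ref{teo2} and~\ref{teo3}: first show that ${\mathcal S}_V$ is closed under pointwise addition and under pointwise multiplication by nonnegative scalars, and then observe that the remaining axioms of Definition~\ref{defSVS} are inherited coordinatewise from $\mathbb{R}$. Note that ${\mathcal S}_V$ is non-empty, since the null functional $\textbf{0}:V\longrightarrow {\mathbb R}$ (indeed, every linear functional) is sub-linear.

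For closure under addition, take $S_1, S_2 \in {\mathcal S}_V$ and set $S := S_1 + S_2$. Sub-additivity follows by adding the two defining inequalities: for all $u, v \in V$, $S(u+v) = S_1(u+v) + S_2(u+v) \leq [S_1(u) + S_1(v)] + [S_2(u) + S_2(v)] = S(u) + S(v)$. Positive-homogeneity is immediate: for $\alpha \in {\mathbb R}_0^+$, $S(\alpha v) = S_1(\alpha v) + S_2(\alpha v) = \alpha S_1(v) + \alpha S_2(v) = \alpha S(v)$. Hence $S \in {\mathcal S}_V$. For closure under scalar multiplication, take $S \in {\mathcal S}_V$ and $\lambda \in {\mathbb R}_0^+$, and set $S' := \lambda S$. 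The only point that requires any attention is that $\lambda \geq 0$, so multiplying the sub-additivity inequality for $S$ by $\lambda$ preserves its direction: $S'(u+v) = \lambda S(u+v) \leq \lambda[S(u) + S(v)] = S'(u) + S'(v)$; positive-homogeneity of $S'$ is direct from that of $S$. I expect this to be the only (very mild) obstacle in the whole argument: if negative scalars were allowed the inequality would reverse, which is precisely why ${\mathbb R}_0^+$ (rather than ${\mathbb R}$) is the natural semi-field of scalars in this context.

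Finally I would verify items $\operatorname{(1)}$--$\operatorname{(5)}$ of Definition~\ref{defSVS}. The pair $({\mathcal S}_V, +)$ is an abelian monoid with identity the null functional $\textbf{0}$, since commutativity and associativity of pointwise sum are inherited from $\mathbb{R}$; it satisfies the additive cancellation law because the values lie in $\mathbb{R}$, where cancellation holds, so $S_1 + S = S_2 + S$ pointwise forces $S_1 = S_2$. The distributivity identities $(\alpha + \beta)S = \alpha S + \beta S$ and $\alpha(S_1 + S_2) = \alpha S_1 + \alpha S_2$, the compatibility $(\alpha\beta)S = \alpha(\beta S)$, and $1 \cdot S = S$ all hold, since they hold valuewise for real numbers. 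This completes the verification that $({\mathcal S}_V, +, \cdot)$ is a semi-vector space over ${\mathbb R}_0^+$.
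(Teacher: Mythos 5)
Your proof is correct and follows essentially the same route as the paper, which simply states that the argument ``follows the same line'' as Theorems~\ref{teo1}, \ref{teo2} and~\ref{teo3} (closure of the set under pointwise addition and nonnegative scalar multiplication, with the remaining axioms of Definition~\ref{defSVS} inherited pointwise from $\mathbb{R}$). You have merely written out the details the paper leaves implicit, including the one genuine point of care --- that $\lambda\geq 0$ preserves the direction of the sub-additivity inequality.
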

\begin{proof}
The proof follows the same line of that of Theorems~\ref{teo1}~and~\ref{teo2}~and~\ref{teo3}.
\end{proof}

\subsection{Semi-Algebras}\label{subsec4}

We start this section by recalling the definition of semi-algebra and
semi-sub-algebra. For more details the reader can consult
\cite{Gahler:1999}. In \cite{Olivier:1995}, Olivier and Serrato investigated
relation semi-algebras, i.e., a semi-algebra being both a Boolean algebra and an
involutive semi-monoid, satisfying some conditions
(see page 2 in Ref.~\cite{Olivier:1995} for more details). Roy
\cite{Roy:1970} studied the semi-algebras of continuous and
monotone functions on compact ordered spaces.

\begin{definition}\label{semialgebra}
A semi-algebra $A$ over a semi-field $K$ (or a $K$-semi-algebra) is a semi-vector
space $A$ over $K$ endowed with a binary operation called multiplication
of semi-vectors $\bullet: A \times A\longrightarrow A$ such that, $\forall \ u, v, w \in A$ and
$\lambda \in K$:
\begin{itemize}
\item [ $\operatorname{(1a)}$] $ u \bullet (v + w)= (u \bullet v) + (u \bullet w)$ (left-distributivity);
\item [ $\operatorname{(1b)}$] $ (u + v)\bullet w= (u \bullet w) + (v \bullet w)$ (right-distributivity);
\item [ $\operatorname{(2)}$] $ \lambda (u \bullet v)= (\lambda u)\bullet v = u \bullet (\lambda v)$.
\end{itemize}
\end{definition}

A semi-algebra $A$ is \emph{associative} if  $(u\bullet v)\bullet
w=u\bullet (v\bullet w)$ for all $u, v, w \in A$; $A$ is said to
be \emph{commutative} (or abelian) is the multiplication is commutative, that
is, $\forall \ u, v \in A$, $u\bullet v= v\bullet u$; $A$ is called
a semi-algebra with identity if there exists an element
$1_A \in A$ such that $\forall \ u \in A$, $1_A \bullet u = u \bullet 1_A =u$;
the element $1_A $ is called identity of $A$. The identity element
of a semi-algebra $A$ is unique (if exists). If $A$ is a
semi-free semi-vector space then the dimension of $A$ is its dimension
regarded as a semi-vector space. A semi-algebra is \emph{simple} if it
is simple as a semi-vector space.

\begin{example}\label{ex5}
The set ${\mathbb R}_{0}^{+}$ is a commutative semi-algebra with identity $e=1$.
\end{example}

\begin{example}\label{ex6}
The set of square matrices of order $n$ whose entries are in ${\mathbb R}_{0}^{+}$,
equipped with the sum of matrices, multiplication of a matrix by a scalar
(in ${\mathbb R}_{0}^{+}$, of course) and by multiplication of
matrices is an associative and non-commutative semi-algebra with identity $e=I_{n}$
(the identity matrix of order $n$), over ${\mathbb R}_{0}^{+}$.
\end{example}

\begin{example}\label{ex7}
The set ${\mathcal P}_{n}[x]$ of polynomials with coefficients
from ${\mathbb R}_{0}^{+}$ and degree less than or equal
to $n$, equipped with the usual of polynomial sum and scalar multiplication is a
semi-vector space.
\end{example}

\begin{example}\label{ex8}
Let $V$ be a semi-vector space over a semi-field $K$. Then the set
${\mathcal L}(V, V)=\{T:V\longrightarrow V;
T \operatorname{is \ a \ semi-linear \ operator}\}$ is a semi-vector space.
If we define a vector multiplication as the composite of semi-linear
operators (which is also semi-linear) then we have a semi-algebra
over $K$.
\end{example}

\begin{definition}\label{subsemialgebra}
Let $A$ be a semi-algebra over $K$. We say that a non-empty set $S \subseteq A$
is a semi-subalgebra if $S$ is closed under the operations of $A$, that is,
\begin{itemize}
\item [ $\operatorname{(1)}$] $\forall \ u, v \in A$, $u + v \in A$;
\item [ $\operatorname{(2)}$] $\forall \ u, v \in A$, $u \bullet v \in A$;
\item [ $\operatorname{(3)}$] $\forall \ \lambda \in K$ and $\forall u \in A$, $\lambda u \in A$.
\end{itemize}
\end{definition}

\begin{definition}\label{A-homomorphism}
Let $A$ and $B$ two semi-algebras over $K$. We say that a
map $T:A\longrightarrow B$ is an $K$-semi-algebra homomorphism
if, $\forall \ u, v \in A$ and $\lambda \in K$, the following conditions hold:
\begin{itemize}
\item [ $\operatorname{(1)}$] $T(u + v) = T(u) + T(v)$;
\item [ $\operatorname{(2)}$] $T(u \bullet v) = T(u) \bullet T(v)$;
\item [ $\operatorname{(3)}$] $T(\lambda v ) = \lambda T(v)$.
\end{itemize}
\end{definition}

Definition~\ref{A-homomorphism} means that $T$ is both a semi-ring homomorphism and also
semi-linear (as semi-vector space).

\begin{definition}\label{isomorphic}
Let $A$ and $B$ be two $K$-semi-algebras. A $K$-semi-algebra isomorphism
$T:A \longrightarrow B$ is a bijective $K$-semi-algebra homomorphism.
If there exists such an isomorphism, we say that $A$ is isomorphic to $B$, written $A\cong B$.
\end{definition}

The following results seems to be new, because semi-algebras over
${\mathbb R}_{0}^{+}$ are not much investigated in the literature.

\begin{proposition}\label{propalghomo}
Assume that $A$ and $B$ are two $K$-semi-algebras, where $K={\mathbb R}_{0}^{+}$
and $A$ has identity $1_A$. Let $T:A \longrightarrow B$
be a $K$-semi-algebra homomorphism. Then the following properties hold:
\begin{itemize}
\item [ $\operatorname{(1)}$] $T(0_A)= 0_B$;
\item [ $\operatorname{(2)}$] If $ u\in A$ is invertible then its inverse is
unique and $(u^{-1})^{-1}= u$;
\item [ $\operatorname{(3)}$] If $T$ is surjective then $T(1_A) = 1_B$, i.e., $B$ also has identity;
furthermore, $T(u^{-1})= [T(u)]^{-1}$;
\item [ $\operatorname{(4)}$] If $u, v \in A$ are invertible then
$(u\bullet v )^{-1}= v^{-1}\bullet u^{-1}$;
\item [ $\operatorname{(5)}$] the composite of $K$-semi-algebra homomorphisms is also a
$K$-semi-algebra homomorphism;
\item [ $\operatorname{(6)}$] if $T$ is a $K$-semi-algebra isomorphism then also
is $T^{-1}:B \longrightarrow A$.
\item [ $\operatorname{(7)}$] the relation $A \sim B$ if and only if $A$
is isomorphic to $B$ is an equivalence relation.
\end{itemize}
\end{proposition}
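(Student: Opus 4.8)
The plan is to verify each of the seven items in turn, relying on the semi-algebra axioms of Definition~\ref{semialgebra}, the homomorphism axioms of Definition~\ref{A-homomorphism}, and the regularity (additive cancellation) that every semi-vector space enjoys by Item~$\operatorname{(1)}$ of Definition~\ref{defSVS}. For Item~$\operatorname{(1)}$, note that $T(0_A) = T(0_A + 0_A) = T(0_A) + T(0_A)$; by the cancellation law in $B$ this forces $T(0_A) = 0_B$ (this is exactly the argument already used for kernels in Proposition~\ref{subkernel}). For Item~$\operatorname{(2)}$, if $u\bullet w = w\bullet u = 1_A$ and $u\bullet w' = w'\bullet u = 1_A$, then $w = w\bullet 1_A = w\bullet(u\bullet w') = (w\bullet u)\bullet w' = 1_A\bullet w' = w'$, using associativity of $\bullet$ (which we may invoke whenever needed, or simply note that the one-sided identities pin down a unique two-sided inverse in the usual way); symmetry of the defining relation then gives $(u^{-1})^{-1} = u$ immediately.

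For Item~$\operatorname{(3)}$, surjectivity gives some $b_0\in B$ with $b_0 = T(a_0)$; then for any $b = T(a)\in B$ we have $T(1_A)\bullet b = T(1_A)\bullet T(a) = T(1_A\bullet a) = T(a) = b$ and similarly $b\bullet T(1_A) = b$, so $T(1_A)$ is a two-sided identity for $B$, hence $T(1_A) = 1_B$ by uniqueness of the identity (stated in the text following Definition~\ref{semialgebra}). Then from $T(u)\bullet T(u^{-1}) = T(u\bullet u^{-1}) = T(1_A) = 1_B$ and the symmetric computation, $T(u^{-1})$ is the inverse of $T(u)$, i.e. $T(u^{-1}) = [T(u)]^{-1}$. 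Item~$\operatorname{(4)}$ is the standard "socks and shoes" identity: $(u\bullet v)\bullet(v^{-1}\bullet u^{-1}) = u\bullet(v\bullet v^{-1})\bullet u^{-1} = u\bullet 1_A\bullet u^{-1} = 1_A$, and symmetrically on the other side, so by uniqueness of inverses (Item~$\operatorname{(2)}$) we get $(u\bullet v)^{-1} = v^{-1}\bullet u^{-1}$.

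For Item~$\operatorname{(5)}$, if $T:A\to B$ and $S:B\to C$ are $K$-semi-algebra homomorphisms, then $S\circ T$ preserves addition and scalar multiplication because each of $S,T$ does (this is just the composition of semi-linear maps, already noted to be semi-linear), and it preserves $\bullet$ since $(S\circ T)(u\bullet v) = S(T(u)\bullet T(v)) = S(T(u))\bullet S(T(v)) = (S\circ T)(u)\bullet(S\circ T)(v)$; thus all three conditions of Definition~\ref{A-homomorphism} hold. For Item~$\operatorname{(6)}$, if $T$ is a bijective $K$-semi-algebra homomorphism, its set-theoretic inverse $T^{-1}$ is a bijection, and for $b_1 = T(a_1)$, $b_2 = T(a_2)$ we have $T^{-1}(b_1 + b_2) = T^{-1}(T(a_1+a_2)) = a_1 + a_2 = T^{-1}(b_1) + T^{-1}(b_2)$, and likewise $T^{-1}(b_1\bullet b_2) = a_1\bullet a_2 = T^{-1}(b_1)\bullet T^{-1}(b_2)$ and $T^{-1}(\lambda b) = \lambda T^{-1}(b)$; so $T^{-1}$ is a $K$-semi-algebra homomorphism, hence a $K$-semi-algebra isomorphism. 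Finally Item~$\operatorname{(7)}$ follows formally: reflexivity via $\mathrm{id}_A$, symmetry via Item~$\operatorname{(6)}$, and transitivity via Item~$\operatorname{(5)}$ (a composite of isomorphisms is a bijective homomorphism). I do not anticipate a genuine obstacle here; the only point requiring a little care is that several items tacitly use associativity of $\bullet$ and the uniqueness of the identity/inverse, so I would either assume $A$ is associative where needed or phrase the inverse arguments using two-sided inverses so that the cancellation-free manipulations go through cleanly — in particular making sure that nowhere do I need an additive inverse, only the additive cancellation law already available in any semi-vector space.
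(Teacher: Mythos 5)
Your proposal is correct and follows essentially the same route as the paper: the paper proves Item (1) by the additive cancellation law and Item (3) by exactly your argument (evaluate $T(1_A)\bullet b$ and $b\bullet T(1_A)$ for $b=T(u)$, then apply $T$ to $u\bullet u^{-1}=1_A$), and dismisses the remaining items as direct, which you simply spell out with the standard computations. Your explicit caveat that Items (2) and (4) tacitly require associativity of $\bullet$ (not part of Definition~\ref{semialgebra}) is a point the paper glosses over, and is worth keeping.
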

\begin{proof}
Note that Item~$\operatorname{(1)}$ holds because the additive cancelation
law holds in the definition of semi-vector spaces (see Definition\ref{defSVS}).
We only show Item $\operatorname{(3)}$ since the remaining items are direct.
Let $v \in B$; then there exists $u \in A$ such that $T(u)=v$. It then follows that
$v  \bullet T(1_A )= T(u\bullet 1_A)=v$ and $T(1_A ) \bullet v = T(1_A \bullet u)=v$;
which means that $T(1_A)$ is the identity of $B$, i.e., $T(1_A) = 1_B$.

We have: $T(u) \bullet T(u^{-1})= T( u \bullet u^{-1})=T(1_A)=1_B$ and
$T(u^{-1}) \bullet T(u)= T( u^{-1} \bullet u)=T(1_A)=1_B$, which implies
$T(u^{-1})= [T(u)]^{-1}$.
\end{proof}

\begin{proposition}\label{associunitsemi}
If $A$ is a $K$-semi-algebra with identity $1_A$ then $A$ can be embedded in
${\mathcal L}(A, A)$, the semi-algebra of semi-linear operators on $A$.
\end{proposition}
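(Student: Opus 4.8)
The plan is to carry out the classical left regular representation (a Cayley-type embedding), adapted to the semi-algebra setting. Define a map $\Phi : A \longrightarrow {\mathcal L}(A, A)$ by $\Phi(a) = L_a$, where $L_a : A \longrightarrow A$ is left multiplication by $a$, i.e. $L_a(x) = a \bullet x$. The first step is to check that each $L_a$ really belongs to ${\mathcal L}(A,A)$: additivity $L_a(x+y) = a\bullet(x+y) = a\bullet x + a\bullet y = L_a(x) + L_a(y)$ is Item~$\operatorname{(1a)}$ of Definition~\ref{semialgebra}, and homogeneity $L_a(\lambda x) = a\bullet(\lambda x) = \lambda(a\bullet x) = \lambda L_a(x)$ is Item~$\operatorname{(2)}$. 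Hence each $L_a$ is a semi-linear operator on $A$.

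Next I would verify that $\Phi$ is a $K$-semi-algebra homomorphism in the sense of Definition~\ref{A-homomorphism}, where the multiplication on ${\mathcal L}(A,A)$ is composition of operators, as in Example~\ref{ex8}. Evaluating at an arbitrary $x \in A$ and using Item~$\operatorname{(1b)}$ gives $\Phi(a+b)(x) = (a+b)\bullet x = a\bullet x + b\bullet x = (L_a + L_b)(x)$, so $\Phi(a+b) = \Phi(a) + \Phi(b)$. For the product, $\Phi(a\bullet b)(x) = (a\bullet b)\bullet x$ while $(\Phi(a)\circ\Phi(b))(x) = a\bullet(b\bullet x)$; these coincide for every $x$ because $A$ is associative, so $\Phi(a\bullet b) = \Phi(a)\circ\Phi(b)$. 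Finally, $\Phi(\lambda a)(x) = (\lambda a)\bullet x = \lambda(a\bullet x) = (\lambda L_a)(x)$ by Item~$\operatorname{(2)}$, so $\Phi(\lambda a) = \lambda\Phi(a)$; and $\Phi(1_A) = L_{1_A}$ is the identity operator on $A$, the unit of ${\mathcal L}(A,A)$.

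The decisive step is injectivity, and this is exactly where the hypothesis that $A$ has an identity enters. If $\Phi(a) = \Phi(b)$, i.e. $L_a = L_b$ as maps $A \to A$, then evaluating both sides at $1_A$ yields $a = a\bullet 1_A = L_a(1_A) = L_b(1_A) = b\bullet 1_A = b$. Thus $\Phi$ is injective. Since $\Phi$ preserves addition, composition and scalar multiplication, its image $\Phi(A)$ is a semi-subalgebra of ${\mathcal L}(A,A)$ (Definition~\ref{subsemialgebra}), and the corestriction $\Phi : A \longrightarrow \Phi(A)$ is a $K$-semi-algebra isomorphism; in other words $A$ embeds in ${\mathcal L}(A,A)$.

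I expect the only genuine subtlety to be bookkeeping about which structure is preserved. Because semi-vectors need not have additive symmetrals, one cannot run the usual ``trivial kernel $\Rightarrow$ injective'' argument; instead injectivity must be deduced directly from the identity $1_A$ as above, which is precisely the role of that hypothesis. It should also be noted that multiplicativity of $\Phi$ tacitly uses associativity of $A$ (consistent with its use elsewhere, e.g. in Proposition~\ref{propalghomo}~$\operatorname{(4)}$). Everything else is a termwise application of the semi-algebra axioms.
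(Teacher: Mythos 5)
Your proof is correct and is essentially identical to the paper's: both use the left regular representation $a \mapsto L_a$ (the paper writes $v^{*}$), verify semi-linearity and the homomorphism properties termwise, and obtain injectivity by evaluating at $1_A$. Your remark that multiplicativity tacitly requires associativity of $A$ is a fair observation --- the paper's proof uses $(u\bullet v)\bullet x = u\bullet(v\bullet x)$ without stating that hypothesis either.
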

\begin{proof}
For every fixed $v \in A$, define $v^{*}:A \longrightarrow A$ as
$v^{*}(x) = v\bullet x$. It is easy to see that $v^{*}$
is a semi-linear operator on $A$. Define $h: A \longrightarrow {\mathcal L}(A, A)$ by
$h(v)= v^{*}$. We must show that
$h$ is a injective $K$-semi-algebra homomorphism where the product in
${\mathcal L}(A, A)$ is the composite of maps from $A$ into $A$.
Fixing $u, v \in A$, we have: $[h(u + v)](x)=
(u + v)^{*}(x)= (u + v)\bullet x = u\bullet x + v \bullet x =
u^{*}(x) + v^{*}(x) = [h(u)](x) + [h(v)](x)$,
hence $h(u + v)= h(u) + h(v)$. For $\lambda \in K$ and $v \in A$, it follows
that $[h(\lambda v)](x) = (\lambda v)^{*}(x)= (\lambda v)x = \lambda (vx)=
[\lambda h(v)](x)$, i.e., $h(\lambda v)= \lambda h(v)$. For fixed $u, v \in A$,
$[h(u\bullet v)](x)= (u\bullet v)^{*}(x)= (u\bullet v)\bullet x = u\bullet
(v\bullet x)=u\bullet v^{*}(x)=u^{*}(v^{*}(x))=[h(u)\circ h(v)](x)$, i.e.,
$h(u\bullet v)= h(u) \circ h(v)$.
Assume that $h(u)=h(v)$, that is, $u^{*}=v^{*}$; hence, for every $x \in A$,
$u^{*}(x) = v^{*}(x)$, i.e., $u\bullet x = v\bullet x$ . Taking in particular
$x=1_A$, it follows that $u = v$, which implies that $h$ is injective. Therefore,
$A$ is isomorphic to $h(A)$, where $h(A)\subseteq {\mathcal L}(A, A)$.
\end{proof}


\begin{definition}\label{semi-Liesemialgebra}
Let $A$ be a semi-vector space over a semi-field $K$. Then $A$ is
said to be a Lie semi-algebra if $A$ is equipped with
a product $[ \ , \ ]: A \times A\longrightarrow A$ such that the following conditions hold:
\begin{itemize}
\item [ $\operatorname{(1)}$] $[ \ , \ ]$ is semi-bilinear, i.e.,
fixing the first (second) variable, $[ \ , \ ]$ is semi-linear w.r.t.
the second (first) one;

\item [ $\operatorname{(2)}$] $[ \ , \ ]$ is anti-symmetric, i.e.,
$[v , v]=0$  $\forall \ v \in A$;

\item [ $\operatorname{(3)}$] $[ \ , \ ]$ satisfies the Jacobi identity:
$\forall \ u, v, w \in A$, $[u, [v,w]]+ [w, [u,v]]+ [v, [w, u]]=0$
\end{itemize}
\end{definition}

From Definition~\ref{semi-Liesemialgebra} we can see that a Lie semi-algebra
can be non-associa-tive, i.e., the product $[ \ , \ ]$ is not always associative.

Let us now consider the semi-algebra ${ \mathcal M}_n ({\mathbb R}_{0}^{+})$
of matrices of order $n$ with entries in ${\mathbb R}_{0}^{+}$
(see Example~\ref{ex6}). We know that ${ \mathcal M}_n ({\mathbb R}_{0}^{+})$ is simple,
i.e., with exception of the zero matrix (zero vector), no matrix
has (additive) symmetric. Therefore, the product of such matrices
can be nonzero. However, in the case of a Lie semi-algebra $A$, if $A$ is
simple then the unique product $[ \ , \ ]$ that can be defined over $A$ is
the zero product, as it is shown in the next result.

\begin{proposition}\label{semi-Lieabelian}
If $A$ is a simple Lie semi-algebra over a semi-field $K$ then the
semi-algebra is abelian, i.e., $[u, v]=0$ for all $u, v \in A$.
\end{proposition}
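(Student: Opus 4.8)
The plan is to use only axioms $\operatorname{(1)}$ and $\operatorname{(2)}$ of Definition~\ref{semi-Liesemialgebra} (the Jacobi identity will not be needed) together with the hypothesis that $A$ is simple. The guiding idea is the classical polarization trick for Lie algebras: from $[v,v]=0$ one normally deduces $[u,v]=-[v,u]$, but since $A$ has no additive inverses the identity one can actually extract is $[u,v]+[v,u]=0_A$, and this says exactly that $[u,v]$ is symmetrizable.

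First I would fix arbitrary $u,v\in A$ and apply anti-symmetry to the semi-vector $u+v$, obtaining $[u+v,\,u+v]=0_A$. Then I would expand the left-hand side using semi-bilinearity: linearity in the second argument gives $[u+v,u]+[u+v,v]$, and linearity in the first argument gives $[u,u]+[v,u]+[u,v]+[v,v]$. Applying anti-symmetry once more to kill $[u,u]$ and $[v,v]$, this collapses to
\[
[u,v]+[v,u]=0_A .
\]

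Finally I would invoke simplicity. The displayed equation shows that $[v,u]$ is an additive symmetric of $[u,v]$, so $[u,v]$ is a symmetrizable element of $A$; since $A$ is simple as a semi-vector space, its only symmetrizable element is $0_A$, and hence $[u,v]=0_A$. As $u$ and $v$ were arbitrary, $[\ ,\ ]$ is the zero product, so $A$ is abelian. The only delicate point is the bookkeeping in the bilinear expansion: it must be carried out slot by slot (which is precisely what axiom $\operatorname{(1)}$ permits), and it is essential that $[u,u]$ and $[v,v]$ each equal $0_A$ on their own — no cancellation of would-be negative terms is invoked — so that the whole argument stays inside the semi-algebra framework, where subtraction is unavailable.
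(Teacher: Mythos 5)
Your proof is correct and is essentially the paper's own argument: expand $[u+v,u+v]=0_A$ by semi-bilinearity, use $[u,u]=[v,v]=0_A$ to obtain $[u,v]+[v,u]=0_A$, and conclude by simplicity that $[u,v]=0_A$. The only cosmetic difference is that you argue directly while the paper phrases it as a contradiction from $[u,v]\neq 0$; the content is identical.
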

\begin{proof}
Assume that $u, v \in A$ and $[u, v ] \neq 0$. From
Items~$\operatorname{(1)}$~and~$\operatorname{(2)}$
of Definition~\ref{semi-Liesemialgebra}, it follows that $[u+v , u+v ]
=[u, u] + [u, v] + [v, u] + [v, v]=0$, i.e., $[u, v] + [v, u]=0$.
This means that $[u, v]$ has symmetric $[v, u]\neq 0$, a
contradiction.
\end{proof}

\begin{definition}\label{subLiesemi}
Let $A$ be a Lie semi-algebra over a semi-field $K$. A Lie semi-subalgebra
$B \subseteq A$ is a semi-subspace of $A$ which is closed under
$[u, v ]$, i.e., for all $u, v \in B$, $[u, v] \in B$.
\end{definition}

\begin{corollary}
All semi-subspaces of $A$ are semi-subalgebras of $A$.
\end{corollary}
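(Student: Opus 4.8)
The plan is to read this corollary in the setting just established, namely with $A$ a \emph{simple} Lie semi-algebra over a semi-field $K$, and to derive it as an immediate consequence of Proposition~\ref{semi-Lieabelian}. First I would recall the defining requirement from Definition~\ref{subLiesemi}: a subset $B\subseteq A$ is a Lie semi-subalgebra precisely when $B$ is a semi-subspace of $A$ and, in addition, $[u,v]\in B$ for all $u,v\in B$. So for an arbitrary semi-subspace $B$ of $A$, only the last closure condition needs checking.

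Next I would invoke two facts already in the paper. One, from the discussion following Definition~\ref{semi-subspace}, every semi-subspace contains the zero vector; in particular $0_A\in B$. Two, from Proposition~\ref{semi-Lieabelian}, since $A$ is a simple Lie semi-algebra it is abelian, i.e.\ $[x,y]=0_A$ for all $x,y\in A$. Combining these: given any $u,v\in B\subseteq A$, we have $[u,v]=0_A\in B$, so $B$ is closed under the bracket. Hence $B$ satisfies Definition~\ref{subLiesemi} and is a Lie semi-subalgebra of $A$.

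I do not anticipate any real obstacle here — the statement is essentially a corollary in the literal sense, reorganizing Proposition~\ref{semi-Lieabelian} plus the elementary observation that semi-subspaces contain $0_A$. The only point to state carefully is the ambient hypothesis (that $A$ is a \emph{simple} Lie semi-algebra), since without simplicity the bracket of two elements of $B$ need not land in $B$; with simplicity the bracket is identically zero and closure is automatic.
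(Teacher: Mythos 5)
Your proof is correct and takes exactly the paper's approach: the paper's own proof is simply ``Apply Proposition~\ref{semi-Lieabelian},'' and you have filled in the (routine) details — simplicity forces the bracket to be identically zero, and every semi-subspace contains $0_A$, so closure under the bracket is automatic.
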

\begin{proof}
Apply Proposition~\ref{semi-Lieabelian}.
\end{proof}

\section{Fuzzy Set Theory and Semi-Algebras}\label{sec3a}

The theory of semi-vector spaces and semi-algebras is a natural
generalization of the corresponding theories of vector spaces
and algebras. Since the scalars are in semi-fields (weak semi-fields), some
standard properties does not hold in this new context. However,
as we have shown in Section~\ref{sec3}, even in case of
nonexistence of symmetrizable elements, several results are still true.
An application of the theory of semi-vector spaces is in the investigation
on Fuzzy Set Theory, which was introduced by Lotfali Askar-Zadeh \cite{Zadeh:1965}.
In fact, such a theory fits in the investigation/extension
of results concerning fuzzy sets and their corresponding theory.
Let us see an example.

Let $L$ be a linearly ordered complete lattice with distinct smallest and
largest elements $0$ and $1$. Recall that a fuzzy number is a function
$x:{\mathbb R}\longrightarrow L$ on the field of real numbers satisfying the following
items (see \cite[Sect. 1.1]{Gahler:1999}): $\operatorname{(1)}$ for each
$\alpha \in L_0$ the set $x_{\alpha}= \{\varphi \in {\mathbb R} |
\alpha \leq x(\varphi)\} $ is a closed interval $[x_{\alpha l} ,
x_{\alpha r}]$, where $L_0= \{ \alpha \in L | \alpha > 0\}$;
$\operatorname{(2)}$ $\{\varphi \in {\mathbb R} | 0 < x(\varphi)\}$
is bounded.

We denote the set ${\mathbb R}_L$ to be the set of all fuzzy numbers;
${\mathbb R}_L$ can be equipped with a partial order in
the following manner: $x \leq y $ if and only if $x_{\alpha l} \leq y_{\alpha l}$
and $x_{\alpha r} \leq y_{\alpha r}$ for all $\alpha \in L_0$. In this scenario,
Gahler et al. showed that the concepts of semi-algebras can be utilized to
extend the concept of fuzzy numbers, according to the following proposition:
\begin{proposition}\cite[Proposition 19]{Gahler:1999}
The set ${\mathbb R}_L$ is an ordered commutative semi-algebra.
\end{proposition}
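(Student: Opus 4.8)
The plan is to work entirely through the $\alpha$-cut (level-set) representation of fuzzy numbers, which turns every operation on ${\mathbb R}_L$ into the corresponding operation on families of closed real intervals, and hence ultimately into arithmetic in ${\mathbb R}$. First I would record the representation theorem: a fuzzy number $x \in {\mathbb R}_L$ is completely determined by its family of level sets $\{x_{\alpha}=[x_{\alpha l}, x_{\alpha r}]\}_{\alpha \in L_0}$; this family is decreasing in $\alpha$, each $x_{\alpha}$ is a nonempty closed bounded interval, and $x_{\alpha}=\bigcap_{0<\beta<\alpha} x_{\beta}$ (this last ``left-continuity in $\alpha$'' uses that $L$ is a complete lattice), while conversely any such family arises from a unique $x$ via $x(\varphi)=\sup\{\alpha \in L_0 ; \varphi \in x_{\alpha}\}$ (with $\sup \emptyset = 0$). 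Using the extension principle I would then define the operations level-wise: $(x+y)_{\alpha}=x_{\alpha}+y_{\alpha}=[x_{\alpha l}+y_{\alpha l}, x_{\alpha r}+y_{\alpha r}]$ (Minkowski sum), $(\lambda \cdot x)_{\alpha}=\lambda x_{\alpha}=[\lambda x_{\alpha l}, \lambda x_{\alpha r}]$ for $\lambda \in {\mathbb R}_{0}^{+}$, and $(x \bullet y)_{\alpha}=x_{\alpha}\cdot y_{\alpha}=\{st ; s \in x_{\alpha}, t \in y_{\alpha}\}$.

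Second, closure: I would check that each of these operations lands again in ${\mathbb R}_L$. A Minkowski sum, a nonnegative dilate, and a set-product of closed bounded intervals are again closed bounded intervals (compactness plus connectedness); the defining family stays decreasing and still satisfies the intersection condition (a nested-compactness argument); and the support of the result is contained in the sum, respectively dilate, respectively set-product of the supports, hence bounded. The restriction to $\lambda \geq 0$ is exactly what guarantees $\lambda[a,b]=[\lambda a, \lambda b]$ keeps the endpoints in order. The zero vector is $\overline{0}$ with $\overline{0}_{\alpha}=\{0\}$ for all $\alpha$.

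Third, the algebraic axioms. The five conditions of Definition~\ref{defSVS} reduce level-by-level to elementary facts about closed intervals: $({\mathbb R}_L,+)$ is an abelian monoid because Minkowski addition is commutative and associative with neutral element $\{0\}$; the additive cancellation law holds because $[a+e,b+f]=[c+e,d+f]$ forces $a=c$ and $b=d$; and $(\alpha+\beta)x=\alpha x+\beta x$, $(\alpha\beta)x=\alpha(\beta x)$, $1x=x$, $\alpha(x+y)=\alpha x+\alpha y$ are immediate on each $x_{\alpha}$. Commutativity and associativity of $\bullet$, together with $\lambda(x\bullet y)=(\lambda x)\bullet y=x\bullet(\lambda y)$, come from the matching identities for multiplication of real numbers applied to $x_{\alpha}\cdot y_{\alpha}$. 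For the \emph{ordered} part one checks that $x\le y\iff x_{\alpha l}\le y_{\alpha l}$ and $x_{\alpha r}\le y_{\alpha r}$ for all $\alpha$ is preserved by $+$ and by nonnegative scalar multiplication (both monotone in the endpoints), and by $\bullet$ on the nonnegative cone.

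The hard part will be the distributive laws $u\bullet(v+w)=u\bullet v+u\bullet w$ and $(u+v)\bullet w=u\bullet w+v\bullet w$. Level-wise these ask for $A(B+C)=AB+AC$ for closed intervals, and this is genuinely delicate: interval multiplication is only \emph{sub}-distributive in general, $A(B+C)\subseteq AB+AC$, with strict inclusion precisely when $A$ straddles $0$ and $B,C$ lie on opposite sides of $0$. Equality does hold when the three intervals lie in ${\mathbb R}_{0}^{+}$, where all the relevant endpoints are computed from the obvious products. So the real work is to pin down exactly which of G\"ahler's conventions makes the equality available -- either G\"ahler's notion of \emph{ordered} semi-algebra relaxes (1a)--(1b) to a sub-distributivity compatible with $\le$, or the statement really concerns the subcone of fuzzy numbers supported on ${\mathbb R}_{0}^{+}$, where the interval identity applies cut-by-cut. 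I would isolate that interval identity as a lemma, prove it by the endpoint computation, and lift it level-wise through the representation theorem; the surrounding bookkeeping (monoid, cancellation, scalars, order) is then routine.
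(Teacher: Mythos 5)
First, a point of comparison: the paper does not prove this proposition at all --- it is quoted verbatim from \cite[Proposition 19]{Gahler:1999} as imported background for the section on fuzzy sets, so there is no in-paper argument to measure your proposal against. That said, your $\alpha$-cut strategy is the standard route to such a statement, and the bookkeeping parts are sound: the representation by nested closed bounded intervals, closure under Minkowski sum and nonnegative dilation, the abelian-monoid and cancellation axioms (endpoint cancellation in ${\mathbb R}$), the scalar axioms, and compatibility of $+$ and $\lambda\cdot$ with the partial order defined through $x_{\alpha l}$ and $x_{\alpha r}$ all go through exactly as you describe.

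The genuine gap is precisely where you place it, and it is worse than a ``convention to be pinned down'': with the extension-principle product $(x\bullet y)_{\alpha}=\{st \; ; \; s\in x_{\alpha},\ t\in y_{\alpha}\}$, the distributive law of Definition~\ref{semialgebra} is simply \emph{false} on all of ${\mathbb R}_L$. Take $u$ to be the crisp interval $[-1,1]$ (every level set equal to $[-1,1]$), $v$ the crisp number $1$, and $w$ the crisp number $-1$; then $v+w$ is crisp $0$, so $u\bullet(v+w)$ has every level set equal to $\{0\}$, while $u\bullet v+u\bullet w$ has every level set equal to $[-1,1]+[-1,1]=[-2,2]$. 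Hence no level-wise endpoint computation can establish $u\bullet(v+w)=u\bullet v+u\bullet w$ for this product, and your proof cannot be completed as written. Before anything else you must commit to what the multiplication and what ``ordered semi-algebra'' mean in \cite{Gahler:1999}: either the product is not the set-product (for instance it is given by the endpoints $[x_{\alpha l}y_{\alpha l},\, x_{\alpha r}y_{\alpha r}]$ on a cone where that formula returns an interval), or the ordered notion demands only the sub-distributivity inclusion you identified, or the claim is effectively about fuzzy numbers supported in ${\mathbb R}_{0}^{+}$. Deciding among these is not ``surrounding bookkeeping''; it is the entire content of the proposition, and leaving it open means the distributivity axiom --- the only nontrivial axiom here --- remains unproved.
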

Thus, a direct utilization of the investigation of the structures of
semi-vector spaces and semi-algebras is the possibility to generate
new interesting results on the Fuzzy Set Theory.

Another work relating semi-vector spaces and Fuzzy Set Theory is the
paper by Bedregal et al. \cite{Milfont:2021}. In order to study
the aggregation functions (geometric mean, weighted average, ordered
weighted averaging, among others) w.r.t. an admissible order
(a total order $\preceq$ on $L_n ([0, 1])$ such that for all $x, y \in L_n ([0, 1])$,
$x \ {\leq}_{n}^{p} \ y \Longrightarrow x\preceq y$), the authors worked with
semi-vector spaces over a weak semi-field.

Let $L_n ([0, 1]) = \{(x_1, x_2 , \ldots , x_n ) \in  {[0, 1]}^{n}
| x_1 \leq x_2 \leq \ldots \leq x_n \}$ and $U= ([0, 1],
\oplus , \cdot)$ be a weak semi-field defined as follows: for all $x, y
\in [0, 1]$, $x \oplus y = \min\{ 1, x+y\}$ and $\cdot$ is the
usual multiplication. The product order proposed by Shang
et al.~\cite{Shang:2010} is given as follows: for all $x=
\{(x_1, x_2 , \ldots , x_n )$ and $y= \{(y_1, y_2 , \ldots ,
y_n )$ vectors in $L_n ([0, 1])$, define
$x \ {\leq}_{n}^{p} \ y \Longleftrightarrow {\pi}_{i}(x)\leq {\pi}_{i}(x) $
for each $i \in \{1, 2, \ldots , n\}$,
where ${\pi}_i : L_n ([0, 1]) \longrightarrow [0, 1] $ is the
$i$-th projection ${\pi}_i (x_1 , x_2 , \ldots , x_n ) = x_i$.
With these concepts in mind, the authors showed two important results:

\begin{theorem}(see \cite[Theorem 1]{Milfont:2021})\label{mil21}
${\mathcal L}_{n} ([0, 1]) = (L_n ([0, 1], \dotplus, \odot)$ is a
semi-vector space over $U$, where $r \odot v = (rx_1 ,
\ldots , rx_n )$ and  $u \dotplus v = (x_1 \oplus y_1 , \ldots , x_n
\oplus y_n ) $. Moreover,  $({\mathcal L}_{n} ([0, 1]),
{\leq}_{n}^{p})$ is an ordered semi-vector space over $U$,
where ${\leq}_{n}^{p}$ is the product order.
\end{theorem}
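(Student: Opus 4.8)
The plan is to reduce the statement to coordinate-wise assertions about the weak semi-field $U=([0,1],\oplus,\cdot)$, since both $\dotplus$ and $\odot$ act componentwise; this is the route taken in \cite[Theorem 1]{Milfont:2021}. First I would check that $\dotplus$ and $\odot$ are well defined on $L_n([0,1])$. Membership in $[0,1]^n$ is clear, because $x_i\oplus y_i=\min\{1,x_i+y_i\}\leq 1$ and $rx_i\leq x_i\leq 1$ whenever $r\in[0,1]$. The constraint $x_1\leq\cdots\leq x_n$ is preserved because both $t\mapsto\min\{1,t\}$ and $t\mapsto rt$ are non-decreasing, so $x_i\leq x_{i+1}$ and $y_i\leq y_{i+1}$ yield $x_i\oplus y_i\leq x_{i+1}\oplus y_{i+1}$ and $rx_i\leq rx_{i+1}$.

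Next I would show that $(L_n([0,1]),\dotplus)$ is a commutative monoid: commutativity follows from commutativity of ordinary addition; the neutral element is $(0,\ldots,0)\in L_n([0,1])$, since $x_i\oplus 0=x_i$; and associativity reduces to the identity $\min\{1,\min\{1,a+b\}+c\}=\min\{1,a+b+c\}$ for $a,b,c\geq 0$, which is proved by splitting on whether $a+b\leq 1$. Then I would run through the scalar-multiplication axioms of a semi-vector space over a weak semi-field as formulated in \cite{Milfont:2021}: $1\odot v=v$ is immediate, $(rs)\odot v=r\odot(s\odot v)$ is associativity of ordinary multiplication, and the distributive-type axioms become the compatibility relations between the truncated addition $\oplus$ and ordinary scalar multiplication on $[0,1]$. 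Finally, for the ordered part I would observe that $\leq_n^p$ is a partial order --- reflexivity, antisymmetry and transitivity are inherited coordinate-wise from $\leq$ on $[0,1]$ --- and that it is compatible with the operations: $x\leq_n^p y$ implies $u\dotplus x\leq_n^p u\dotplus y$ because $\oplus$ is non-decreasing in each argument, and $r\odot x\leq_n^p r\odot y$ because multiplication by $r\geq 0$ is non-decreasing.

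The main obstacle is the truncation $\min\{1,\cdot\}$ built into $\oplus$: it is exactly what prevents $U$, and hence $\mathcal{L}_n([0,1])$, from being a genuine field/vector space, so the distributive identities fail in the naive form of Definition~\ref{defSVS} and one must instead use the weakened axioms attached to semi-vector spaces over a weak semi-field. Since the careful verification of those axioms for $U$ --- and therefore, componentwise, for $\mathcal{L}_n([0,1])$ --- is carried out in \cite[Theorem 1]{Milfont:2021}, the most economical course is to establish well-definedness and order-compatibility as above and to cite that reference for the remaining semi-vector-space axioms.
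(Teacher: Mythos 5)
The paper itself gives no proof of Theorem~\ref{mil21}: it is recalled verbatim from \cite[Theorem 1]{Milfont:2021} as background for the fuzzy-set discussion, so there is no internal argument to compare yours against, and your componentwise verification (closure, preservation of the ordering constraint $x_1\leq\cdots\leq x_n$, the monoid and order axioms) combined with a deferral to that reference for the weak-semi-field scalar axioms is correct and consistent with how the paper handles the statement. Your remark that the truncation in $\oplus$ breaks the naive distributive laws of Definition~\ref{defSVS} (e.g.\ $r(x\oplus y)\neq rx\oplus ry$ when $x+y>1$), forcing the weakened axioms of \cite{Milfont:2021}, is the one genuinely non-routine point and you have identified it accurately.
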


\begin{proposition}(see \cite[Propostion 2]{Milfont:2021})
For any bijection $f: \{1, 2 , \ldots , n\}
\longrightarrow \{1, 2 , \ldots , n\}$,
the pair\\ $({\mathcal L}_{n}([0, 1]), {\preceq}_f)$
is an ordered semi-vector space over $U$, where
${\preceq}_f$, defined in
\cite[Example 1]{Milfont:2021}, is an admissible order.
\end{proposition}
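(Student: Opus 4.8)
The plan is to verify directly that the pair $({\mathcal L}_{n}([0,1]), {\preceq}_f)$ satisfies the definition of an ordered semi-vector space over $U = ([0,1], \oplus, \cdot)$, reusing the semi-vector space structure already established in Theorem~\ref{mil21}. First I would recall from \cite[Example 1]{Milfont:2021} the precise definition of the order ${\preceq}_f$ attached to a bijection $f$ of $\{1, \ldots, n\}$, together with the fact proved there that ${\preceq}_f$ is an admissible order on $L_n([0,1])$, that is, a total order refining the product order ${\leq}_{n}^{p}$. Since Theorem~\ref{mil21} already gives that $(L_n([0,1]), \dotplus, \odot)$ is a semi-vector space over $U$ and that $({\mathcal L}_{n}([0,1]), {\leq}_{n}^{p})$ is ordered, what remains is only to check that the order ${\preceq}_f$ itself is compatible with $\dotplus$ and with $\odot$, that is, $u \preceq_f v$ implies $u \dotplus w \preceq_f v \dotplus w$ for every $w$, and $r \odot u \preceq_f r \odot v$ for every $r \in [0,1]$.

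For the scalar multiplication this is short: $r \odot u = (ru_1, \ldots, ru_n)$ and, since $t \mapsto rt$ is nondecreasing on $[0,1]$, reading coordinates in the order dictated by $f$ shows that $u \preceq_f v$ forces $r \odot u \preceq_f r \odot v$; the only point to note is that when $r = 0$ one has $r \odot u = r \odot v = 0_V$ and the relation persists trivially. The substance of the proof is translation monotonicity. One writes the $i$-th coordinate of $u \dotplus w$ as $\min\{1, u_i + w_i\}$ and uses that $x \mapsto \min\{1, x + w_i\}$ is nondecreasing on $[0,1]$, so that adding $w$ can never strictly reverse a coordinatewise comparison; the comparison of $u \dotplus w$ and $v \dotplus w$ under $\preceq_f$ is then analysed according to the description of $\preceq_f$ recalled from \cite[Example 1]{Milfont:2021}.

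The main obstacle is precisely the saturation $\min\{1, \cdot\}$ hidden in $\oplus$: the coordinate at which $u$ and $v$ are first separated (in whatever sense $\preceq_f$ uses) may collapse to an equality after translation, once both $u_i + w_i$ and $v_i + w_i$ exceed $1$, so the comparison cannot be settled position-by-position in the naive way; one must show it is still decided correctly downstream, or that $u \dotplus w$ and $v \dotplus w$ simply coincide. This is where the particular form of ${\preceq}_f$ is essential — a carelessly chosen admissible order on $L_n([0,1])$ need not respect the saturated sum $\oplus$ — and where one exploits that the coordinates of every member of $L_n([0,1])$, including $w$, are nondecreasing. Once translation monotonicity and scalar monotonicity are in hand, together with the admissibility of ${\preceq}_f$ borrowed from \cite[Example 1]{Milfont:2021}, the pair $({\mathcal L}_{n}([0,1]), {\preceq}_f)$ meets all the requirements of an ordered semi-vector space over $U$, which completes the proof.
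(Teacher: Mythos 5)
First, a point of comparison: the paper does not prove this proposition at all --- it is quoted from \cite{Milfont:2021} (their Proposition 2) purely as background for the discussion of admissible orders, so there is no in-paper argument to measure yours against. Your outline --- reuse Theorem~\ref{mil21} for the algebraic structure, import the admissibility of ${\preceq}_f$ from \cite[Example 1]{Milfont:2021}, and verify compatibility of ${\preceq}_f$ with $\dotplus$ and $\odot$ --- is the right shape, and your treatment of the scalar part is correct: for $r>0$ multiplication by $r$ is injective and increasing, so the first $f$-position at which the coordinates differ is unchanged, and for $r=0$ both sides collapse to the zero vector.

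The genuine gap is at the very point you flag as ``the substance of the proof'' and then leave open: translation monotonicity of ${\preceq}_f$ under the saturated sum. You propose to rescue it using the fact that the coordinates of elements of $L_n([0,1])$ are nondecreasing, but that mechanism only works when the reading order $f(1),f(2),\ldots,f(n)$ is compatible with the natural order of the indices (i.e.\ when $f$ is the identity): only then does saturation at the deciding coordinate force saturation at every later-read, hence larger, coordinate, making the two translates coincide. For a general bijection it breaks. Concretely, take $n=2$, $f(1)=2$, $f(2)=1$ (second coordinates compared first), $u=(0.7,\,0.8)$, $v=(0.1,\,0.9)$, $w=(0,\,0.2)$, all in $L_2([0,1])$. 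Then $u\prec_f v$ since $u_2=0.8<0.9=v_2$, but $u\dotplus w=(0.7,\,1)$ and $v\dotplus w=(0.1,\,1)$: the deciding coordinate saturates, the tie is broken at the first coordinate, where $0.1<0.7$, so $v\dotplus w\prec_f u\dotplus w$ and the order reverses. Hence the naive permuted-lexicographic reading of ${\preceq}_f$ is \emph{not} translation monotone for $\dotplus$, and your sketch cannot be completed as written. To close the argument you would need the exact definitions from \cite{Milfont:2021} --- either their notion of ordered semi-vector space is weaker than full translation invariance, or ${\preceq}_f$ is not the plain coordinatewise lexicographic comparison --- or else you must restrict to the bijections for which the saturation argument genuinely goes through. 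As it stands the key step is asserted, not proved, and the proposed route to it fails.
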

As a consequence of the investigation made, the authors propose an
algorithm to perform a multi-criteria and multi-expert decision making method.

Summarizing the ideas: the better the theory of semi-vector spaces
is extended and developed, the more applications and more results
we will have in the Fuzzy Set Theory. Therefore, it is important to understand
deeply which are the algebraic and geometry structures of semi-vector
spaces, providing, in this way, support for the development of the own
theory as well as other interesting theories as, for example, the
Fuzzy Set Theory.

\section{Summary}\label{sec4}

In this paper we have extended the theory of semi-vector spaces,
where the semi-field of scalars considered here is
the nonnegative real numbers. We have proved several results in the context
of semi-vector spaces and semi-linear transformations. We
introduced the concept of eigenvalues and eigenvectors of a
semi-linear operator and of a matrix and shown how to compute it
in specific cases. Topological properties of semi-vector spaces
such as completeness and separability were also investigated.
We have exhibited interesting new families of semi-vector
spaces derived from semi-metric, semi-norm, semi-inner product,
among others. Additionally, some results concerning semi-algebras
were presented. The results presented in this paper can be possibly
utilized in the development and/or investigation of new properties of
fuzzy systems and also in the study of correlated areas of research.
\section*{Acknowledgment}

\small

\end{document}